\title{Recurrence and non-uniformity of bracket polynomials}
\date{}
\author{Matthew C. H. Tointon\thanks{For some of the time during which this work was carried out the author was supported by an EPSRC doctoral training grant, awarded by the Department of Pure Mathematics and Mathematical Statistics in Cambridge, and a Bye-Fellowship from Magdalene College, Cambridge.}}
\numberwithin{equation}{section}
\newtheorem{prop}{Proposition}[section]
\newtheorem{theorem}[prop]{Theorem}
\newtheorem{lemma}[prop]{Lemma}
\newtheorem{corollary}[prop]{Corollary}
\theoremstyle{definition}
\newtheorem{definition}[prop]{Definition}
\theoremstyle{remark}
\newtheorem{remark}[prop]{Remark}
\newtheorem{remarks}[prop]{Remarks}
\newtheorem{example}[prop]{Example}
\newtheorem*{remark*}{Remark}
\newtheorem*{remarks*}{Remarks}
\theoremstyle{plain}
\newcommand{\R}{\mathbb{R}}
\newcommand{\C}{\mathbb{C}}
\newcommand{\N}{\mathbb{N}}
\newcommand{\Z}{\mathbb{Z}}
\newcommand{\Prob}{\mathbb{P}}
\newcommand{\E}{\mathbb{E}}
\newcommand{\g}{\mathfrak{g}}
\newcommand{\h}{\mathfrak{h}}
\newcommand{\Lip}{\text{\textup{Lip}}}
\newcommand{\Span}{\text{\textup{Span}}}
\newcommand{\id}{\text{Id}}
\begin{document}
\maketitle
\begin{center}
\scriptsize{\textit{Department of Pure Mathematics and Mathematical Statistics, Centre for Mathematical Sciences, University of Cambridge, Wilberforce Road, Cambridge CB3 0WB, United Kingdom}}\\
\scriptsize{\textit{email: M.Tointon@dpmms.cam.ac.uk}}
\end{center}
\begin{abstract}
In his celebrated proof of Szemer\'edi's theorem that a set of integers of positive density contains arbitrarily long arithmetic progressions, W. T. Gowers introduced a certain sequence of norms $\|\,\cdot\,\|_{U^2[N]}\le\|\,\cdot\,\|_{U^3[N]}\le\ldots$ on the space of complex-valued functions on the set $[N]$. An important question regarding these norms concerns for which functions they are `large' in a certain sense.

This question has been answered fairly completely by B. Green, T. Tao and T. Ziegler in terms of certain algebraic functions called \emph{nilsequences}. In this work we show that more explicit functions called \emph{bracket polynomials} have `large' Gowers norm. Specifically, for a fairly large class of bracket polynomials, called \emph{constant-free} bracket polynomials, we show that if $\phi$ is a bracket polynomial of degree $k-1$ on $[N]$ then the function $f:n\mapsto e(\phi(n))$ has Gowers $U^k[N]$-norm uniformly bounded away from zero.

We establish this result by first reducing it to a certain recurrence property of sets of constant-free bracket polynomials. Specifically, we show that if $\theta_1,\ldots,\theta_r$ are constant-free bracket polynomials then their values, modulo $1$, are all close to zero on at least some constant proportion of the points $1,\ldots,N$.

The proof of this statement relies on two deep results from the literature. The first is work of V. Bergelson and A. Leibman showing that an arbitrary bracket polynomial can be expressed in terms of a so-called \emph{polynomial sequence} on a nilmanifold. The second is a theorem of B. Green and T. Tao describing the quantitative distribution properties of such polynomial sequences.

In the special cases of the bracket polynomials $\phi_{k-1}(n)=\alpha_{k-1}n\{\alpha_{k-2}n\{\ldots\{\alpha_1n\}\ldots\}\}$, with $k\le5$, we give elementary alternative proofs of the fact that $\|\phi_{k-1}\|_{U^k[N]}$ is `large', without reference to nilmanifolds. Here we write $\{x\}$ for the fractional part of $x$, chosen to lie in $(-1/2,1/2]$.
\end{abstract}
\tableofcontents
\section{Introduction}
A remarkable theorem of E. Szemer\'edi \cite{szem} states that, for $\sigma>0$, $k\in\N$ and $N\gg_{k,\sigma}1$, every subset of $[N]$ of cardinality at least $\sigma N$ contains a $k$-term arithmetic progression. The first good bounds in this theorem were obtained in the celebrated proof of Szemer\'edi's theorem by W. T. Gowers \cite{gowers.szem}.

A key observation in Gowers's work was that arithmetic progressions in a finite abelian group $G$ can be detected using certain norms $\|\,\cdot\,\|_{U^2(G)}\le\|\,\cdot\,\|_{U^3(G)}\le\ldots$ on the space of complex-valued functions on $G$. In general, the $U^k(G)$-norm is helpful in detecting arithmetic progressions of length $k+1$ in the group $G$, and this has led these norms to become one of the major tools in additive combinatorics. Gowers called them \emph{uniformity norms}; they are now often called \emph{Gowers uniformity norms}, or simply \emph{Gowers norms}.

Given a finite abelian group $G$, the Gowers norms $\|\,\cdot\,\|_{U^k(G)}$ are defined as follows. First, define the \emph{multiplicative derivative} of a function $f:G\to\C$ by
\[
\Delta_h^*f(x):=f(x+h)\overline{f(x)},
\]
and abbreviate
\[
\Delta_{h_1,\ldots,h_k}^*f(x):=\Delta_{h_1}^*\ldots\Delta_{h_k}^*f(x).
\]
Then for each integer $k\ge2$ define the \emph{Gowers $U^k(G)$-norm} by
\[
\|f\|_{U^k(G)}:=(\E_{x,h_1,\ldots,h_k\in G}\Delta_{h_1,\ldots,h_k}^*f(x))^{1/2^k}.
\]
It can be shown that $\|\cdot\|_{U^k(G)}$ is indeed a norm, but we will not need this fact and so we omit its proof.

Szemer\'edi's theorem, of course, concerns arithmetic progressions in $\Z$ or, more precisely, in $[N]:=\{1,\ldots,N\}$, neither of which is a finite group. However, it is also possible to define the Gowers norm of a function $f:[N]\to\C$, and this can then be applied in finding arithmetic progressions inside $[N]$.  The following definition is reproduced from \cite[\S1]{u4.inverse}.
\begin{definition}[Gowers $U^k{[N]}$-norm]\label{def:[N]-norm}
Given an integer $N>0$ fix some other integer $\tilde{N}\ge2^kN$. Define a function $\tilde{f}:\Z/\tilde{N}\Z\to\C$ by $\tilde{f}(x)=f(x)$ for $x\in[N]$ and $\tilde{f}(x)=0$ otherwise. Then $\|f\|_{U^k[N]}$ is defined by
\begin{equation}\label{eq:ukn.norm}
\|f\|_{U^k[N]}:=\|\tilde{f}\|_{U^k(\Z/\tilde{N}\Z)}/\|1_{[N]}\|_{U^k(\Z/\tilde{N}\Z)}.
\end{equation}
Here, and throughout the present work, if $X$ is a set then $1_X$ denotes the indicator function of $X$.
\end{definition}
As is remarked in \cite[\S1]{u4.inverse}, it is easy to see that the quantity (\ref{eq:ukn.norm}) is independent of the choice of $\tilde{N}\ge2^kN$, and so $\|\cdot\|_{U^k[N]}$ is well defined.

Denote by $\mathcal{D}$ the unit disc $\{z\in\C:|z|\le1\}$. It turns out that when applying Gowers norms to finding arithmetic progressions in $[N]$ it is useful to have a classification of functions $f:[N]\to\mathcal{D}$ satisfying
\begin{equation}\label{eq:non-uniform}
\|f\|_{U^k[N]}\ge\delta.
\end{equation}
A function satisfying (\ref{eq:non-uniform}) for a given $\delta$ is generally said to be \emph{non-uniform}; a classification of such functions is the content of so-called \emph{inverse conjectures} and \emph{inverse theorems} for the Gowers norms.

It is easy to see that $\|f\|_{U^k[N]}$ is bounded above by $1$ for every function $f:[N]\to\mathcal{D}$, and also that this bound is attained by the function $1_{[N]}$. In fact, there is a very natural broader class of functions attaining this upper bound, which we now describe. Given a function $\phi:[N]\to\R$ we denote the \emph{discrete derivatives} of $\phi$ by
\[
\Delta_h\phi(n):=\phi(n+h)-\phi(n),
\]
and abbreviate
\[
\Delta_{h_1,\ldots,h_n}\phi:=\Delta_{h_1}\ldots\Delta_{h_n}\phi.
\]
Adopting the standard convention that $e(x):=\exp(2\pi ix)$, we have
\begin{equation}\label{eq:mult.deriv}
\Delta_h^*e(\phi(x))=e(\Delta_h\phi(x)).
\end{equation}
When $\phi$ is a polynomial of degree $k-1$, this implies in particular that every term in the sum
\begin{equation}\label{eq:avg.deriv}
\sum_{n,h_1,\ldots,h_{k+1}}\Delta_{h_1,\ldots,h_{k+1}}^*e(\phi(n))
\end{equation}
is equal to 1, and so if $f:[N]\to\mathcal D$ is the function defined by setting $f(n):=e(\phi(n))$ then the Gowers norm $\|f\|_{U^k[N]}$ is equal to 1.

There are more exotic examples of functions $f:[N]\to\mathcal{D}$ satisfying (\ref{eq:non-uniform}). B. Green, T. Tao and T. Ziegler \cite[Proposition 1.4]{u4.inverse} show that certain algebraic functions called \emph{nilsequences}, which we define shortly, are non-uniform. Indeed, they demonstrate the stronger fact that any function that correlates with a nilsequence in a certain sense is non-uniform.

In order to define a nilsequence we must first recall that an \emph{$s$-step nilmanifold} is the quotient $G/\Gamma$ of an $s$-step nilpotent Lie group $G$ by a discrete cocompact subgroup $\Gamma$. For example, if $G$ is the \emph{Heisenberg group}
\[
\left(\begin{array}{ccc}
               1 & \R & \R \\
               0 & 1  & \R \\
               0 & 0   & 1
               \end{array}\right)
\]
and $\Gamma$ is the discrete subgroup
\[
\left(\begin{array}{ccc}
              1 & \Z & \Z \\
               0 & 1  & \Z \\
               0 & 0   & 1
               \end{array}\right)
\]
then it is straightforward to check that $G/\Gamma$ has a fundamental domain in $G$ defined by
\begin{equation}\label{eq:heis.fund}
\left(\begin{array}{ccc}
               1 & (-1/2,1/2] & (-1/2,1/2]  \\
               0 & 1  & (-1/2,1/2]  \\
               0 & 0  & 1
               \end{array}\right);
\end{equation}
see, for example, \cite[\S1]{poly.seq}. The subgroup $\Gamma$ is therefore cocompact, and so $G/\Gamma$ is a 2-step nilmanifold, called the \emph{Heisenberg nilmanifold}.

\emph{Throughout this paper, when we write that $G/\Gamma$ is a nilmanifold we assume that $G$ is a connected, simply connected nilpotent Lie group.}

A sequence $y_n$ is said to be an \emph{$s$-step nilsequence} if there exists an $s$-step nilmanifold $G/\Gamma$, elements $g,x\in G$ and a continuous function $F:G/\Gamma\to\C$ such that
\[
y_n=F(g^nx\Gamma).
\]

It turns out that this exhausts all the possibilities for non-uniform functions. Indeed, a remarkable inverse theorem for the Gowers norms, also due to Green, Tao and Ziegler \cite{uk.inverse}, states, roughly, that if $f:[N]\to\mathcal{D}$ satisfies (\ref{eq:non-uniform}) then $f$ correlates with a $(k-1)$-step nilsequence. We refer the reader to \cite{uk.inverse} for a precise formulation.

Thus we have a comprehensive, if not particularly explicit, classification of all the Gowers non-uniform functions $f:[N]\to\mathcal {D}$. It is noted in \cite[\S1]{uk.inverse}, however, that more explicit formulations of the inverse conjectures for the Gowers norms are also possible. In the case of a cyclic group $\Z/N\Z$ of prime order, for example, \cite[Theorem 10.9]{u3.inverse} gives a particularly concrete inverse theorem for the Gowers $U^3$-norm.

In order to describe that result we require some notation. Here, and throughout this work, we denote by $\{x\}$ the fractional part of $x\in\R$, chosen to lie in $(-1/2,1/2]$, and denote by $[x]$ the integer part $[x]:=x-\{x\}$. Let $N$ be a prime and let $k\ge0$. Then \cite[Theorem 10.9]{u3.inverse} says, roughly, that a function $f:\Z/N\Z\to\mathcal{D}$ satisfies $\|f\|_{U^3(\Z/N\Z)}\ge\delta$ if and only if there are $\xi_1,\xi_2\in\widehat{\Z/N\Z}$ and a real number $\alpha$ such that $f$ correlates with the function $f':\Z/N\Z\to\mathcal{D}$ defined by
\[
f'(x):=e(\alpha\{\xi_1\cdot x\}\{\xi_2\cdot x\}).
\]
Again, we refer the reader to \cite{u3.inverse} for a precise statement.

The function $x\mapsto\alpha\{\xi_1\cdot x\}\{\xi_2\cdot x\}$ is an example of a so-called \emph{bracket polynomial} on $\Z/N\Z$. It is also possible to define bracket polynomials on the set $[N]$, where $N$ is now an arbitrary positive integer. Essentially these are functions like $\phi:n\mapsto 2n\{\sqrt{2}n^2\}+n^3$ that are constructed from genuine polynomials using the operations $+,\cdot,\{\cdot\}$; we give a precise definition in Section~\ref{sec:def.of.bracket}, and in particular clarify the notion of the \emph{degree} of a bracket polynomial.

It turns out that bracket polynomials on $[N]$ arise quite naturally from sequences on nilmanifolds. To see this in the case of the bracket polynomial $\{\alpha n[\beta n]\}$, for example, let $g(n)$ be the sequence in the Heisenberg group given by
\[
g(n)=\left(\begin{array}{ccc}
               1 & -\alpha n & 0 \\
               0 & 1        & \beta n \\
               0 & 0        & 1
               \end{array}\right).
\]
It is straightforward to check that the image of $g(n)$ in the fundamental domain (\ref{eq:heis.fund}) is the element
\[
\left(\begin{array}{ccc}
               1 & \{-\alpha n\} & \{\alpha n[\beta n]\} \\
               0 & 1            & \{\beta n\}\\
               0 & 0            & 1
               \end{array}\right),
\]
in which $\{\alpha n[\beta n]\}$ appears quite prominently as the upper-right entry. This in fact turns out to be a general phenomenon. Bergelson and Leibman \cite{berg.leib} show that an arbitrary bracket polynomial can be expressed in terms of a nilmanifold in similar fashion. See Theorem \ref{thm:berg.leib} for more details.

Given the role played by bracket polynomials on $\Z/N\Z$ in the inverse theory for the $U^3(\Z/N\Z)$-norms, as well as the link between bracket polynomials and sequences on nilmanifolds due to Bergelson--Leibman and the link between sequences on nilmanifolds and Gowers norms due to Green--Tao--Ziegler, it is natural to ask what role bracket polynomials on $[N]$ play in the inverse theory for the $U^k[N]$ norms. The aim of this paper is to explore this role.

Our first, and principal, theorem states that a fairly large class of bracket polynomials $\phi$ give rise to non-uniform functions $e(\phi)$. This is the class of \emph{constant-free} bracket polynomials. These are defined precisely in Section~\ref{sec:def.of.bracket}, but essentially a bracket polynomial is said to be constant free if it is constructed from genuine polynomials using the operations $+,\cdot,\{\,\cdot\,\}$, and each of the genuine polynomials used in this construction has zero constant term. Thus, for example, the bracket polynomial $\{\alpha n\{\beta n\}\}+\gamma n^2$ is constant free, but the bracket polynomial $\alpha n\{\beta n+\gamma\}$ is not.

We show, then, that if $\phi$ is a constant-free bracket polynomial of degree at most $k-1$ then the quantity $\|e(\phi)\|_{U^k[N]}$ must be bounded away from zero. The bound obtained is uniform in $N$. It does depend on the bracket polynomial being considered, but only on its `shape'; we make this precise in Section~\ref{sec:def.of.bracket} using the notion of a \emph{bracket form}, but essentially this means, for example, that the bound obtained for the bracket polynomial $\{\alpha n\{\beta n\}\}$ is uniform across all choices of $\alpha$ and $\beta$.
\begin{theorem}[Bracket polynomials are non-uniform; rough statement]\label{thm:bracket.non.uniform.vague}
Let $\phi$ be a constant-free bracket polynomial of degree at most $k-1$. Then there is some $\delta$, depending only on the `shape' of $\phi$, such that
\[
\|e(\phi)\|_{U^k[N]}\ge\delta.
\]
\end{theorem}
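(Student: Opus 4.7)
Starting from identity~(\ref{eq:mult.deriv}), $\Delta_h^* e(\phi(x)) = e(\Delta_h\phi(x))$, and unpacking Definition~\ref{def:[N]-norm}, we have
\[
\|e(\phi)\|_{U^k[N]}^{2^k} = \frac{1}{\|1_{[N]}\|_{U^k(\Z/\tilde N\Z)}^{2^k}}\;\E_{n,h_1,\ldots,h_k\in\Z/\tilde N\Z}\; e\bigl(\Delta_{h_1,\ldots,h_k}\phi(n)\bigr)\prod_{\omega\in\{0,1\}^k}1_{[N]}(n+\omega\cdot h).
\]
Because the denominator is a positive constant depending only on $k$, it suffices to bound the right-hand expectation from below, uniformly in $N$. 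The plan is to show that a positive proportion of cube corners $(n,h_1,\ldots,h_k)\in[N]^{k+1}$ satisfy $\|\Delta_{h_1,\ldots,h_k}\phi(n)\|_{\R/\Z}<1/10$, so that the corresponding summands have real part at least $1/2$.

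The first main ingredient is a \emph{bracket calculus lemma}: if $\phi$ is a constant-free bracket polynomial in a single variable of degree at most $k-1$, then there exist constant-free bracket polynomials $\theta_1,\ldots,\theta_r$ in the $k+1$ variables $(n,h_1,\ldots,h_k)$ with
\[
\Delta_{h_1,\ldots,h_k}\phi(n)\equiv \theta_1(n,h_1,\ldots,h_k)+\cdots+\theta_r(n,h_1,\ldots,h_k)\pmod 1.
\]
I would prove this by structural induction on the way $\phi$ is built from polynomials via $+,\,\cdot,\,\{\,\cdot\,\}$. The base case is a genuine constant-free polynomial of degree at most $k-1$, for which $\Delta_{h_1,\ldots,h_k}\phi=0$ identically. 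The inductive step uses the Leibniz identity $\Delta_h(uv)=(\Delta_h u)v+u(\Delta_h v)+(\Delta_h u)(\Delta_h v)$ to handle products, and the congruence $\Delta_h\{\psi\}\equiv\Delta_h\psi\pmod 1$ (which follows directly from $\{x+y\}\equiv\{x\}+\{y\}\pmod 1$) to push $\Delta_h$ through the bracket operator. Constant-freeness is preserved because every term produced by $\Delta_h$ contains a factor coming from the discrete derivative of an inner expression, and this factor vanishes when the $h_i$'s are set to zero.

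The second ingredient is the recurrence property advertised in the abstract: for any finite family of constant-free bracket polynomials $\theta_1,\ldots,\theta_r$ on $[N]^{k+1}$ and any $\varepsilon>0$, there is a constant $\delta>0$, depending only on their shapes and on $\varepsilon$ and $r$, such that at least $\delta N^{k+1}$ tuples $(n,h_1,\ldots,h_k)$ satisfy $\|\theta_i(n,h_1,\ldots,h_k)\|_{\R/\Z}<\varepsilon$ simultaneously for every $i$. Applied with $\varepsilon<1/(100r)$ to the $\theta_i$ produced above, this gives the required positive-density set of tuples on which $\operatorname{Re}e(\Delta_{h_1,\ldots,h_k}\phi(n))\ge 1/2$, and the theorem follows.

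The principal obstacle is the recurrence statement itself, which is the technical heart of the paper: one must feed the Bergelson--Leibman theorem to represent the $\theta_i$ as coordinates of a polynomial sequence $g(n)\Gamma$ on a nilmanifold, verify that constant-freeness guarantees $g(0)=\id$ (so that the orbit starts at the identity coset), and then apply the Green--Tao quantitative equidistribution theorem to show that either $g(n)\Gamma$ equidistributes (in which case near-returns to the identity are forced to occur with positive density) or else $g$ factors through a proper sub-nilmanifold containing the identity, allowing an inductive reduction. The bracket calculus of step one is comparatively routine once the precise definitions of Section~\ref{sec:def.of.bracket} are in hand, but still requires careful book-keeping of nested brackets and of the constant-free property through both the Leibniz expansion and the $\{\,\cdot\,\}$-derivative identity.
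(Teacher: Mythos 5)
Your overall blueprint is right: reduce non-uniformity to a recurrence statement for constant-free bracket polynomials, and prove the recurrence via Bergelson--Leibman plus the Green--Tao quantitative equidistribution theorem. You also correctly identify the role of constant-freeness (forcing $g(0)=\id$). But there is a genuine gap in the way you pass from the recurrence statement to the lower bound on the Gowers norm, and it is precisely the gap that Section~\ref{sec:strong.local.poly} of the paper is designed to close.

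You claim that it ``suffices'' to show that a positive proportion of tuples $(n,h_1,\ldots,h_k)$ satisfy $\|\Delta_{h_1,\ldots,h_k}\phi(n)\|_{\R/\Z}<1/10$, on the grounds that those summands then have real part at least $1/2$. This is false: the remaining tuples contribute terms $e(\Delta_{h_1,\ldots,h_k}\phi(n))$ of arbitrary phase, and those terms can cancel, or indeed overwhelm, the positive contribution. If, say, a fraction $\delta$ of the cube corners in $[N]^{k+1}$ land in your good set and the rest contribute phase close to $-1$, the expectation is roughly $\tfrac12\delta - (1-\delta)$, which is negative for small $\delta$. A positive density of ``good'' tuples, by itself, gives no lower bound on the expectation.

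The paper sidesteps this by never averaging $e(\Delta\phi)$ over \emph{all} tuples. Lemma~\ref{lem:application.of.gcs} is an application of the Gowers--Cauchy--Schwarz inequality with $g_{\mathbf 0}=f$ and $g_\omega=1_B\cdot f$ for $\omega\neq\mathbf 0$: it produces the bound
\[
\|f\|_{U^k[N]}\gg_k\Bigl|\E_{n,h}\Bigl(e(\Delta_{h_1,\ldots,h_k}\phi(n))\textstyle\prod_{\omega\ne\mathbf 0}1_B(n+\omega\cdot h)\Bigr)\Bigr|,
\]
which kills every tuple whose nonzero corners fail to lie in $B$. The cost is that one cannot control the $\mathbf0$-corner directly; this is exactly why the paper introduces the \emph{strongly} locally polynomial condition (that the derivative vanishes as soon as the $2^k-1$ non-trivial corners lie in $B$) rather than the plain locally polynomial one, and why Lemma~\ref{lem:strong.local.poly.set} takes the intervals $J_\nu$ away from the boundary so that membership of the non-trivial corners in $B$ \emph{forces} the $\mathbf 0$-corner into a slightly larger good set. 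With that in place, every surviving term in the expectation equals $1$ exactly, and Lemma~\ref{lem:boxes.for.gcs} supplies the density count. Your proposal has no analogue of this filtering step, so the sign-cancellation problem is unaddressed.

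Two smaller points. First, your ``bracket calculus lemma'' writes $\Delta_{h_1,\ldots,h_k}\phi(n)$ as a sum of constant-free bracket polynomials in the $k+1$ variables $(n,h_1,\ldots,h_k)$; even granting this, the recurrence statement you would then need is a \emph{multi-variable} analogue of Theorem~\ref{lem:general.Bohr.set.size.bound.lower}, whereas the paper's Theorem~\ref{lem:general.Bohr.set.size.bound.lower} (and its proof via Theorem~\ref{thm:poly.seq}, which concerns sequences $g:\Z\to G$) is stated and proved for a single integer variable. A multi-parameter version is plausible via the corresponding parts of Green--Tao, but it is not what the paper provides and would require separate justification. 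Second, the congruence $\Delta_h\{\psi\}\equiv\Delta_h\psi\pmod 1$ does not commute with multiplication by the other (non-integer) factors appearing in a bracket polynomial, so pushing the derivative through a nested $\{\cdot\}$ inside a product produces terms whose reduction to ``constant-free bracket polynomials in $(n,h)$ modulo $1$'' is more delicate than the one-line Leibniz sketch suggests; the paper avoids this entirely by working with \emph{exact} vanishing of derivatives on the set $B$, not vanishing modulo $1$ on all of $[N]^{k+1}$.
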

See Theorem \ref{thm:bracket.non.uniform} for a more precise statement. The `constant-free' condition results from our use of Theorem \ref{lem:general.Bohr.set.size.bound.lower}; see also Remark \ref{rem:const.free}.

At its simplest level, our proof of Theorem \ref{thm:bracket.non.uniform.vague} rests on a preliminary result stating that bracket polynomials have a rather suggestive property called being \emph{locally polynomial}.
\begin{definition}[Locally polynomial]
Let $\phi:[N]\to\R$ be a function and let $B\subset[N]$. Then $\phi$ is said to be \emph{locally polynomial of degree $k-1$} on $B$ if whenever $n\in[N]$ and $h\in[-N,N]^k$ satisfy $n+\omega\cdot h\in B$ for all $\omega\in\{0,1\}^k$ we have $\Delta_{h_1,\ldots,h_k}\phi(n)=0$.
\end{definition}
Indeed, we show in Section \ref{sec:brackets.locally.poly} that every bracket polynomial $\phi$ on $[N]$ is locally polynomial on some `large' set $B_\phi\subset[N]$. The relevance of this to the study of Gowers norms lies in the identity (\ref{eq:mult.deriv}). Just as this identity implied that the terms in the sum (\ref{eq:avg.deriv}) were all equal to $1$ when $\phi$ was a genuine polynomial, if $\phi$ is \emph{locally} polynomial on a suitably large set then this suggests some bias towards $1$ in the terms of the sum (\ref{eq:avg.deriv}). This in turn suggests that $\|e(\phi)\|_{U^k[N]}$ should be bounded away from zero.

Unfortunately, it is not clear that one can proceed directly from the property of being locally polynomial to the property of having large Gowers norm, and so the results of Section \ref{sec:brackets.locally.poly} alone are not sufficient to prove Theorem \ref{thm:bracket.non.uniform.vague}. In Section \ref{sec:strong.local.poly}, however, we show that a slightly stronger property, which we call being \emph{strongly locally polynomial}, is sufficient to imply that a bracket polynomial is non-uniform.

It turns out that a certain recurrence property of bracket polynomials is sufficient to imply the property of being strongly locally polynomial, and hence to imply that a bracket polynomial is non-uniform. This is the principal motivation for our second theorem.
\begin{theorem}[Recurrence of bracket polynomials; rough statement]\label{thm:general.Bohr.set.size.bound.lower.rough}
Let $\theta_1,\ldots,\theta_r$ be con\-stant-free bracket polynomials and let $\delta>0$. Then there are some $\varepsilon>0$ depending only on the `shapes' of the $\theta_i$, and $N_0>0$ depending on the `shapes' of the $\theta_i$ and on $\delta$, such that whenever $N\ge N_0$ the proportion of $n\in[N]$ for which $\{\theta_i(n)\}\in(-\delta,\delta)$ is at least $\varepsilon$.
\end{theorem}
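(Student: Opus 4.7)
The plan is to invoke the Bergelson--Leibman theorem to realise each $\theta_i$ as a component of a polynomial sequence on a nilmanifold, and then to extract the desired recurrence from the Green--Tao equidistribution theorem for such sequences.

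First, I would combine the finitely many bracket polynomials $\theta_1,\ldots,\theta_r$ into a single object. By the Bergelson--Leibman theorem (cited as Theorem \ref{thm:berg.leib} in the paper), each $\theta_i$ can be written as $\theta_i(n) = F_i(g_i(n)\Gamma_i)$ for some polynomial sequence $g_i$ on a nilmanifold $G_i/\Gamma_i$ and some continuous function $F_i$. Passing to the product nilmanifold $G/\Gamma := \prod_i G_i/\Gamma_i$ and the diagonal polynomial sequence $g(n) := (g_1(n),\ldots,g_r(n))$, I reduce to a single nilmanifold. Crucially, because each $\theta_i$ is constant-free, I would arrange that $g(0)$ is the identity, so that the identity coset $e_G\Gamma$ lies on the orbit. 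Moreover, by continuity of the $F_i$ I can pick a neighbourhood $U$ of $e_G\Gamma$ in $G/\Gamma$ (depending only on the $F_i$ and on $\delta$) such that $x\Gamma \in U$ forces $\{F_i(x\Gamma)\} \in (-\delta,\delta)$ for every $i$.

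It then suffices to show that $g(n)\Gamma \in U$ for a proportion of $n \in [N]$ bounded below by some $\varepsilon>0$ depending only on the shapes of the $\theta_i$ (so only on $G/\Gamma$ and on the `type' of $g$, and not on the numerical data such as the $\alpha$'s, $\beta$'s, etc.). Here I would apply the Green--Tao factorisation theorem for polynomial sequences on nilmanifolds (the second deep result mentioned in the abstract). For any `smoothness' parameter $M$, this produces a factorisation $g(n) = \varepsilon(n) g'(n) \gamma(n)$ where $\varepsilon$ is $(M,N)$-smooth, $\gamma$ is $M$-rational and periodic of period $\le M$, and $g'(n)\Gamma''$ is $1/M$-equidistributed on some sub-nilmanifold $G''/\Gamma''$ of $G/\Gamma$. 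Since $g(0)$ is the identity, I can track the base point through the factorisation and argue that, on each arithmetic progression $P_j$ where $\gamma$ is constant, the relevant translate of $G''/\Gamma''$ passes through (or close to) the identity coset.

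On such a progression, equidistribution of $g'(n)\Gamma''$ combined with the Lipschitz control on $\varepsilon(n)$ lets me conclude that $g(n)\Gamma \in U$ for a positive proportion of $n \in P_j$, where the proportion is bounded below by the Haar measure of a small neighbourhood of the identity in the relevant sub-nilmanifold. Summing over the finitely many (at most $M$) progressions $P_j$ gives a lower bound on the density of good $n$ in $[N]$ in terms of $\delta$, the dimension and Mal'cev structure of $G/\Gamma$, and the modulus of continuity of the $F_i$ — all of which depend only on the shapes of the $\theta_i$.

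The main obstacle I expect is ensuring that the parameter $\varepsilon$ depends only on the shapes of the $\theta_i$ and not on $\delta$ (the theorem explicitly requires this separation). The neighbourhood $U$ of the identity naturally shrinks with $\delta$, which would force the positive density of returns to shrink too, in apparent contradiction with the quantitative statement. The resolution must come from taking $U$ to contain a fixed sub-nilmanifold of $G/\Gamma$ on which all the $F_i$ vanish identically (so that the $\delta$-dependence is absorbed into a fixed neighbourhood once $N$ is large enough compared to $\delta$), and this is where the constant-free hypothesis plays its decisive role: it guarantees that such a non-trivial sub-nilmanifold containing the identity exists inside the orbit closure, which in turn forces the orbit to return to any $\delta$-neighbourhood of the identity with density $\varepsilon$ independent of $\delta$ for $N \ge N_0(\delta)$.
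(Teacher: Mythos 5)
Your overall strategy matches the paper's proof almost exactly: invoke Bergelson--Leibman (Theorem~\ref{thm:berg.leib}) to realise the $\theta_i$ as the last $r$ coordinates of a constant-free polynomial mapping into a product nilmanifold $T_p^r/Z_p^r$, observe that constant-freeness forces $g(0)=1$, apply the Green--Tao factorisation (Theorem~\ref{thm:poly.seq}), and extract a positive density of returns near the identity coset from the equidistributed factor $g'$ together with control of $\varepsilon$ and $\gamma$ on a shortened progression. This is precisely the skeleton of the paper's proof of the precise version, Theorem~\ref{lem:general.Bohr.set.size.bound.lower}.

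Two points need correcting, one minor and one more substantive. Minor: the Bergelson--Leibman output is not $\theta_i(n)=F_i(g_i(n)\Gamma_i)$ for \emph{continuous} $F_i$; the $F_i$ are nilmanifold \emph{coordinate} functions, which are discontinuous across the boundary of the fundamental domain. This is why the paper works with metric balls, Lipschitz cutoffs (Lemma~\ref{lem:equidist.to.recur}), and a comparison between the coordinate map and the metric (Lemma~\ref{lem:std.basis}) rather than with continuous test functions. The continuity framing would make the ``small neighbourhood of the identity forces $\{F_i\}\in(-\delta,\delta)$'' step fail at the boundary; the correct substitute is the bound $|\chi_{\mathcal Y}(x)|\ll d(x\Gamma,\Gamma)$.

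The more substantive issue is your final paragraph. The worry you raise --- that the returning proportion should be independent of $\delta$ --- is based on the wording of the \emph{rough} statement, but that wording is simply imprecise, as you can see by comparing with the precise Theorem~\ref{lem:general.Bohr.set.size.bound.lower}, whose conclusion is $|B_N(\theta_1,\ldots,\theta_r;I_\delta,\ldots,I_\delta)|\gg_{\Theta_1,\ldots,\Theta_r,\delta}N$, with an implied constant that does depend on $\delta$. No such $\delta$-independence can hold: already for $\theta(n)=\alpha n$ with $\alpha$ badly approximable the set $\{n\in[N]:\{\alpha n\}\in(-\delta,\delta)\}$ has density close to $2\delta$, so the proportion necessarily shrinks with $\delta$. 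Your proposed resolution --- that a positive-codimension subnilmanifold on which the $F_i$ vanish identically would rescue $\delta$-independence --- cannot work, since such a subnilmanifold has Haar measure zero. In short: drop the final paragraph, read the rough statement's ``$\varepsilon$ depending only on the shapes'' as ``$\varepsilon$ depending only on the shapes and on $\delta$,'' and the rest of your plan goes through and is essentially the paper's argument.
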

See Theorem \ref{lem:general.Bohr.set.size.bound.lower} for a precise statement. We show in Section \ref{sec:strong.local.poly} that this is sufficient to imply Theorem \ref{thm:bracket.non.uniform.vague}; it is potentially also of interest in its own right.
\begin{remark}
We show by example in Remark \ref{rem:const.free} that the constant-free condition is necessary in Theorem \ref{thm:general.Bohr.set.size.bound.lower.rough}. Theorem \ref{thm:bracket.non.uniform.vague}, on the other hand, could conceivably remain true in the absence of that condition.
\end{remark}

In Section \ref{sec:literature}, we appeal to the work of Bergelson--Leibman showing that bracket polynomials can be expressed in terms of certain sequences on nilmanifolds, as well as to work of B. Green and T. Tao describing the distribution properties of such sequences, to establish Theorem \ref{thm:general.Bohr.set.size.bound.lower.rough}, or rather the more precise Theorem \ref{lem:general.Bohr.set.size.bound.lower}.

The appeal to the results of Bergelson--Leibman and Green--Tao in the proof of Theorem \ref{thm:bracket.non.uniform.vague} renders the argument far from elementary. It is interesting to see for which bracket polynomials one can use elementary methods to establish Theorem \ref{thm:bracket.non.uniform.vague}. In Sections \ref{sec:weak.recur} and \ref{sec:approx.loc.poly} we consider this problem in the model setting of the bracket polynomials $\phi_{k-1}(n)$ defined by $\alpha_{k-1}n\{\alpha_{k-2}n\{\ldots\{\alpha_1n\}\ldots\}\}$. Theorem \ref{thm:bracket.non.uniform.vague} of course instantly tells us that $\|e(\phi_{k-1})\|_{U^k[N]}\gg_k1$; in Sections \ref{sec:weak.recur} and \ref{sec:approx.loc.poly} we arrive at this statement in the cases $k\le5$ by entirely elementary methods.
\subsection*{Acknowledgements}It is a pleasure to thank Tim Gowers and Ben Green for helpful and stimulating conversations, and Emmanuel Breuillard, Tom Sanders and an anonymous referee for careful readings of and detailed comments on earlier versions of this paper.
\section{Bracket polynomials on $[N]$}\label{sec:def.of.bracket}
In this section we give formal definitions of some of the concepts we discussed in the introduction. In particular, we define bracket polynomials on $[N]$ precisely. The definitions of bracket polynomials are essentially already contained in the literature; see, for example, \cite[1.11-1.12]{berg.leib}. Nonetheless, we give them in full detail here, in part so as to set notation, but also in order to introduce the related concept of a \emph{bracket form}. The latter is necessary in order to make precise what we mean by `shape' in Theorem \ref{thm:bracket.non.uniform.vague}.
\begin{definition}[Bracket polynomials on ${[N]}$]\label{def:bracket.poly}
Bracket polynomials on $[N]$ are functions from $[N]$ to $\R$ defined recursively as follows.
\begin{itemize}
\item A genuine polynomial $\phi$ of degree $k$ is also a bracket polynomial of degree at most $k$.
\item If $\phi:[N]\to\R$ is a bracket polynomial of degree at most $k$ then the functions $-\phi:[N]\to\R$, defined by $(-\phi)(n):=-(\phi(n))$, and $\{\phi\}:[N]\to\R$, defined by $\{\phi\}(n):=\{\phi(n)\}$, are also bracket polynomials of degree at most $k$.
\item If $\phi_1,\phi_2:[N]\to\R$ are bracket polynomials of degree at most $k_1,k_2$, respectively, then the function $\phi_1\cdot\phi_2:[N]\to\R$ defined by $\phi_1\cdot\phi_2(n):=\phi_1(n)\phi_2(n)$ is a bracket polynomial of degree at most $k_1+k_2$, and the function $\phi_1+\phi_2:[N]\to\R$ defined by $\phi_1+\phi_2(n):=\phi_1(n)+\phi_2(n)$ is a bracket polynomial of degree at most $\max\{k_1,k_2\}$.
\end{itemize}
If in this definition we restrict the genuine polynomials to those with zero constant term, the resulting functions are said to be \emph{constant-free} bracket polynomials. Those bracket polynomials that do not use the $+$ operation are called \emph{elementary}.
\end{definition}
\begin{remark}It will almost always be the case that we will be interested only in the value of a bracket polynomial modulo 1, and so we might easily and naturally define bracket polynomials to be functions into $\R/\Z$. However, certain statements and proofs are slightly cleaner if we view them as functions into $\R$ and then project to $\R/\Z$ only when it comes to the final application.
\end{remark}
In order to make the statement of Theorem \ref{thm:bracket.non.uniform.vague} precise, we need some way of defining what we mean by the `shape' of a bracket polynomial. To that end, we first develop a definition that formalises this concept for genuine polynomials. The basic idea is to say that two polynomials $\phi_1$ and $\phi_2$ have the same `shape' if there is some `polynomial' in $n$ with coefficients taken from the list of symbols $\alpha_1,\alpha_2,\ldots$ such that both $\phi_1$ and $\phi_2$ can be obtained by replacing each symbol $\alpha_i$ by a real number. Thus, for example, the polynomials $3n^2$ and $\pi n^2$ would have the same `shape' because they can each be realised by replacing the symbol $\alpha_1$ in the `polynomial' $\alpha_1n^2$ by a real number. We shall call $\alpha_1n^2$ a \emph{polynomial form} and call $3n^2$ and $\pi n^2$ \emph{realisations} of the polynomial form $\alpha_1n^2$.

In fact, the definition of a polynomial form will need to be slightly more complicated than is suggested by the preceding paragraph. This is because in Appendix \ref{sec:berg.leib} it will be convenient for the set of polynomial forms to form a ring.
\begin{definition}[Ring of polynomial forms]\label{def:poly.form}
Let $\alpha_1,\alpha_2,\ldots$ be a countably infinite list of symbols; we shall call this list an \emph{alphabet}. Define a \emph{monomial form} in these symbols to be a string of the form $+\alpha_{i_1}\cdots\alpha_{i_t}n^k$ or $-\alpha_{i_1}\cdots\alpha_{i_t}n^k$, with $i_1\le\ldots\le i_t$ and $k\ge0$ an integer. Define $k$ to be the \emph{degree} of such a monomial form. If $k\ne0$ then we additionally say that $+\alpha_{i_1}\cdots\alpha_{i_t}n^k$ and $-\alpha_{i_1}\cdots\alpha_{i_t}n^k$ are \emph{constant-free} monomial forms of degree $k$.

Now suppose that $\Phi_1,\ldots,\Phi_r$ is a finite list of monomial forms of degree at most $k$. Then the string $\Phi_1+\ldots+\Phi_r$ is said to be a \emph{polynomial form} of degree at most $k$. If the $\Phi_i$ are all constant free then the string $\Phi_1+\ldots+\Phi_r$ is also said to be \emph{constant free}.

We make the set of polynomial forms into a ring by defining multiplication on the set of monomial forms, and then extending it (uniquely) to the set of polynomial forms by requiring it to be distributive over addition. Specifically, we formally define multiplication on the symbols $+$ and $-$ by setting $(+\cdot+)=(-\cdot-)=+$ and $(+\cdot-)=(-\cdot+)=-$, and then if each of $\epsilon,\epsilon'$ represents either $+$ or $-$ we define the product of the monomial forms $\epsilon\alpha_{i_1}\cdots\alpha_{i_t}n^k$ and $\epsilon'\alpha_{{i'}_1}\cdots\alpha_{{i'}_{t'}}n^{k'}$ to be
\[
\epsilon\alpha_{i_1}\cdots\alpha_{i_t}n^k\cdot\epsilon'\alpha_{{i'}_1}\cdots\alpha_{{i'}_{t'}}n^{k'}=(\epsilon\cdot\epsilon')\alpha_{j_1}\cdots\alpha_{j_{t+t'}}n^{k+k'},
\]
where the $\alpha_{j_1},\ldots,\alpha_{j_{t+t'}}$ are precisely the $\alpha_{i_1},\ldots,\alpha_{i_t},\alpha_{i'_1},\ldots,\alpha_{i'_{t'}}$, only permuted so that $j_l\le j_{l'}$ whenever $l<l'$.

From now on we drop the $+$ from the polynomial form $+\alpha_{i_1}\cdots\alpha_{i_t}n^k$ and write simply $\alpha_{i_1}\cdots\alpha_{i_t}n^k$.
\end{definition}
Thus, for example, the strings $-\alpha_1n$ and $-\alpha_2n^3$ are polynomial forms, and their product is $\alpha_1\alpha_2n^4$.
\begin{definition}[Realisation of a polynomial form]
Suppose that $\Phi$ is a polynomial form featuring the symbols $\alpha_1,\ldots,\alpha_m$, and for each $i=1,\ldots,m$ let $a_i$ be a real number. Then the polynomial obtained by replacing each instance of $\alpha_i$ in $\Phi$ with the real number $a_i$ is said to be a \emph{realisation} of the polynomial form $\Phi$.
\end{definition}
Thus, for example, the string $\Phi=\alpha_1n+\alpha_2n^2-\alpha_1\alpha_2n^3$ is a polynomial form of degree at most 3, and the polynomial $2n+3n^2-6n^3$ is a realisation of $\Phi$.

To convert this into a definition valid for bracket polynomials, let us make the following slightly more abstract version of Definition \ref{def:bracket.poly}.
\begin{definition}[Bracket expressions]\label{def:br.exp}
Let $A$ be a set. Then the \emph{bracket expressions} in the elements of $A$ are certain strings of elements of $a$ and the symbols $+,-,\cdot,\{,\}$, defined recursively as follows.
\begin{itemize}
\item If $a\in A$ then the string $a$ is a bracket expression in the elements of $A$.
\item If $b$ is a bracket expression in the elements of $A$ then the strings $-b$ and $\{b\}$ are also a bracket expression in the elements of $A$.
\item If $b_1,b_2$ are bracket expression in the elements of $A$ then the strings $b_1\cdot b_2$ and $b_1+b_2$ are bracket expressions in the elements of $A$.
\end{itemize}
\end{definition}
\begin{remark}
Definition \ref{def:br.exp} is similar in spirit to that found in \cite[\S6.3]{berg.leib}.
\end{remark}
\begin{definition}[Bracket forms]
Let $A$ be the set of polynomial forms in the alphabet $\alpha_1,\alpha_2,\ldots$. Then the \emph{bracket forms} in the same alphabet are certain bracket expressions in the elements of $A$ defined recursively as follows.
\begin{itemize}
\item A polynomial form of degree at most $k$ is also a bracket form of degree at most $k$.
\item If $\Phi$ is a bracket form of degree at most $k$ then the expressions $-\Phi$ and $\{\Phi\}$ are also bracket forms of degree at most $k$.
\item If $\Phi_1,\Phi_2$ are bracket forms of degree at most $k_1,k_2$, respectively, then the expression $\Phi_1\cdot\Phi_2$ is a bracket form of degree at most $k_1+k_2$, and the expression $\Phi_1+\Phi_2$ is a bracket polynomial of degree at most $\max\{k_1,k_2\}$.
\end{itemize}
If the polynomial forms appearing in this recursive construction of a bracket form are all constant free, then the resulting bracket form is also said to be \emph{constant free}.

Now suppose that $\Phi$ is a bracket form featuring the symbols $\alpha_1,\ldots,\alpha_m$, and for each $i=1,\ldots,m$ let $a_i$ be a real number. Then the bracket polynomial obtained by replacing each instance of $\alpha_i$ in $\Phi$ with the real number $a_i$ is said to be a \emph{realisation} of the bracket form $\Phi$.
\end{definition}
Thus, for example, the string $\Phi=\{\alpha_1n\{\alpha_2n\}\}$ is a constant-free bracket form of degree at most 2, and the bracket polynomials $\{\sqrt{2}n\{\sqrt{3}n\}\}$ and $\{\pi n\{\frac{2}{7}n\}\}$ are realisations of $\Phi$.

We are now in a position to state a more precise version of Theorem \ref{thm:bracket.non.uniform.vague}.
\begin{theorem}[Bracket polynomials are non-uniform; precise statement]\label{thm:bracket.non.uniform}
Let $\Phi$ be a constant-free bracket form of degree at most $k-1$. Then for every realisation $\phi$ of $\Phi$ we have
\[
\|e(\phi)\|_{U^k[N]}\gg_\Phi1.
\]
\end{theorem}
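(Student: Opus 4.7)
The plan is to follow the three-stage strategy sketched in the introduction. First, I would establish that every constant-free bracket polynomial $\phi$ of degree at most $k-1$ is locally polynomial of degree $k-1$ on a Bohr-type set $B_\phi \subset [N]$ that is defined by a finite list of auxiliary constant-free bracket polynomials $\theta_1, \ldots, \theta_r$ arising from the subterms of $\phi$. Second, I would upgrade this to the stronger notion of being strongly locally polynomial (to be introduced in Section \ref{sec:strong.local.poly}) and show that this suffices to imply largeness of the Gowers norm. Third, I would verify the required density property of $B_\phi$ via the recurrence Theorem \ref{thm:general.Bohr.set.size.bound.lower.rough}, which itself rests on deep results of Bergelson--Leibman and Green--Tao.

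For the first stage, I would induct on the depth of the bracket form defining $\phi$. The base case of a genuine polynomial is vacuous because such $\phi$ is already globally polynomial. For sums and products I would combine the Bohr sets coming from the two recursive pieces. The crucial case is when $\phi$ contains a subexpression of the form $\{\theta\}$: here the identity $\{\theta(n+h)\} - \{\theta(n)\} = \theta(n+h) - \theta(n)$ holds as long as neither $\{\theta(n)\}$ nor $\{\theta(n+h)\}$ is close to the boundary $\pm 1/2$, i.e., as long as both values of $\theta$ modulo $1$ lie in a small neighbourhood of $0$. Declaring $B_\phi$ to be the set of $n$ on which every such inner fractional part is close to $0$ ensures that the fractional-part operation commutes with discrete differencing on cubes contained in $B_\phi$, and hence that $\phi$ reduces to a genuine polynomial of the same degree on such cubes, killing its $k$th discrete derivative.

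Given that $\phi$ is strongly locally polynomial of degree $k-1$ on $B_\phi$, I would expand $\|e(\phi)\|_{U^k[N]}^{2^k}$ using the iterated multiplicative derivative identity $\Delta^*_{h_1,\ldots,h_k} e(\phi(x)) = e(\Delta_{h_1,\ldots,h_k} \phi(x))$, which follows inductively from (\ref{eq:mult.deriv}). For every parallelepiped $(x+\omega\cdot h)_{\omega\in\{0,1\}^k}$ whose $2^k$ corners all lie in $B_\phi$, the term $\Delta^*_{h_1,\ldots,h_k} e(\phi)(x)$ equals $1$ and therefore contributes with full weight to the expectation defining the Gowers norm on $\Z/\tilde{N}\Z$. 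The strongly-locally-polynomial property, as opposed to mere local polynomiality, is exactly what guarantees that the density of such parallelepipeds, normalised by the constant $\|1_{[N]}\|_{U^k(\Z/\tilde{N}\Z)}^{2^k}$ from Definition \ref{def:[N]-norm}, is bounded below by a quantity depending only on the form $\Phi$, yielding $\|e(\phi)\|_{U^k[N]} \gg_\Phi 1$.

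The main obstacle is the recurrence statement Theorem \ref{thm:general.Bohr.set.size.bound.lower.rough}, which is needed to know that $B_\phi$ has positive density uniformly in $N$ and contains the necessary parallelepipeds. To prove it I would invoke the theorem of Bergelson--Leibman to represent each $\theta_i$ as (a coordinate of) a polynomial sequence $g_i(n)\Gamma$ on some nilmanifold $G/\Gamma$, and then apply the quantitative equidistribution theorem of Green--Tao: either the joint sequence is equidistributed on $G/\Gamma$, in which case a positive proportion of $n$ land in the small neighbourhood of the identity corresponding to $\{\theta_i(n)\} \in (-\delta,\delta)$ for every $i$; or the sequence factors through a proper subnilmanifold, on which one can iterate the argument. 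The constant-free hypothesis ensures that the polynomial sequence passes through the identity at $n=0$, which is precisely what keeps the extracted subnilmanifolds passing through the identity at each stage of this iteration and so lets the argument terminate with a positive proportion of $n$ making every $\theta_i(n)$ close to zero.
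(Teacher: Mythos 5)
Your stages 1 and 3 match the paper's architecture (locally polynomial on a Bohr-type set defined by the bracket components, plus the Bergelson--Leibman and Green--Tao machinery to control the density of that set), but stage 2 has a genuine gap. When you expand $\|e(\phi)\|_{U^k[N]}^{2^k}$ as an average of $e(\Delta_{h_1,\ldots,h_k}\phi(n))$ over all $(n,h)$, the parallelepipeds whose corners lie in $B_\phi$ contribute $1$ each, but the remaining terms are arbitrary unit complex numbers and can have negative real part; the total average can therefore be made as small as you like even when a positive proportion of terms equal $1$. Observing that ``the good parallelepipeds contribute with full weight'' does not, by itself, give any lower bound on the sum. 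The paper closes this gap with the Gowers--Cauchy--Schwarz inequality (Lemma \ref{lem:application.of.gcs}): applying it with $g_{\mathbf 0}=f$ and $g_\omega=1_B\cdot f$ for $\omega\neq\mathbf 0$ shows that $\|f\|_{U^k[N]}$ is bounded below, up to constants, by the restricted average in (\ref{eq:application.of.gcs}), in which only the $\omega\neq\mathbf 0$ vertices are required to lie in $B$. This is the device that actually converts ``many good parallelepipeds'' into a lower bound on the norm, and it does not appear in your outline.

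Relatedly, your explanation of why the \emph{strong} local polynomiality property is needed is wrong. You write that it ``guarantees that the density of such parallelepipeds \ldots is bounded below,'' but the density bound is a purely combinatorial consequence of $|B|\gg N$ (Lemma \ref{lem:boxes.for.gcs}) and has nothing to do with strong versus ordinary local polynomiality. The real reason for strengthening the definition is to match the asymmetry created by the Gowers--Cauchy--Schwarz step: since the factor at $\omega=\mathbf 0$ is the untruncated $f$, the restricted average in (\ref{eq:application.of.gcs}) ranges over configurations where the vertex $n=x+\mathbf 0\cdot h$ may lie outside $B$, and one still needs $\Delta_{h_1,\ldots,h_k}\phi(n)=0$ in that situation. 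That is precisely what ``strongly locally polynomial'' encodes, and without the Gowers--Cauchy--Schwarz step there would be no reason to introduce the stronger notion at all. (A smaller point: the identity you cite should be $\{\theta(n+h)\}-\{\theta(n)\}=\{\theta(n+h)-\theta(n)\}$ rather than $\theta(n+h)-\theta(n)$; what is needed is Lemma \ref{lem:frac.part.difference}(i), together with the overflow control of Lemma \ref{lem:strong.local.poly.set} illustrated by Example \ref{ex:bracket.overflow}.)
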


When considering a bracket polynomial such as $\phi:n\mapsto n\{\alpha n\{\beta n\}\{\gamma n\}\}$ it will be useful to have a way of referring to the simpler `bracketed' components, in this case $\{\beta n\}$, $\{\gamma n\}$ and $\{\alpha n\{\beta n\}\{\gamma n\}\}$, from which $\phi$ is built up. We shall therefore call $\{\beta n\}$, $\{\gamma n\}$ and $\{\alpha n\{\beta n\}\{\gamma n\}\}$ the \emph{bracket components} of $\phi$. In general, we shall use the following definition, which is similar, though not identical, to that found in \cite[\S8]{berg.leib}.
\begin{definition}[Bracket components of a bracket polynomial]
The set $C(\phi)$ of \emph{bracket components} of a bracket polynomial $\phi$ will be a set of bracket polynomials defined recursively as follows.
\begin{itemize}
\item If $\phi$ is a genuine polynomial then $C(\phi)=\varnothing$.
\item If $\phi=-\nu$ for some bracket polynomial $\nu$ then $C(\phi)=C(\nu)$.
\item If $\phi=\{\nu\}$ for some bracket polynomial $\nu$ then $C(\phi)=C(\nu)\cup\{\nu\}$.\footnote{Note that $\{\nu\}$ here is the singleton containing $\nu$, not the fractional part of $\nu$.}
\item If $\phi=\nu_1\cdot\nu_2$ or $\nu_1+\nu_2$ for some bracket polynomials $\nu_1,\nu_2$ then $C(\phi)=C(\nu_1)\cup C(\nu_2)$.
\end{itemize}
The set $C(\Phi)$ of \emph{bracket components} of a bracket form $\Phi$ will be a set of bracket forms defined analogously, as follows.
\begin{itemize}
\item If $\Phi$ is a polynomial form then $C(\Phi)=\varnothing$.
\item If $\Phi=-\Theta$ for some bracket form $\Theta$ then $C(\Phi)=C(\Theta)$.
\item If $\Phi=\{\Theta\}$ for some bracket form $\Theta$ then $C(\Phi)=C(\Theta)\cup\{\Theta\}$.
\item If $\Phi=\Theta_1\cdot\Theta_2$ or $\Theta_1+\Theta_2$ for some bracket forms $\Theta_1,\Theta_2$ then $C(\Phi)=C(\Theta_1)\cup C(\Theta_2)$.
\end{itemize}
\end{definition}
\section{Bracket polynomials are locally polynomial}\label{sec:brackets.locally.poly}
We discussed in the introduction the relevance to the Gowers norms of the property of being locally polynomial. The aim of this section is to show that bracket polynomials have this property.
\begin{prop}[A bracket polynomial is locally polynomial on a set of positive density]
\label{prop:brackets.locally.poly}
Let $\phi:[N]\mapsto\R$ be a bracket polynomial of degree at most $k$ with $|C(\phi)|\le c$. Then there exists a set $B\subset[N]$ of cardinality $\Omega_{k,c}(N)$ on which $\phi$ is locally polynomial of degree at most $k$.
\end{prop}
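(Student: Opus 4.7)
The plan is a pigeonhole argument that refines $B$ one bracket component at a time, processing them in order of increasing nesting depth and starting from $B=[N]$.

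The central lemma is the following: if $\psi:[N]\to\R$ is locally polynomial of degree at most $d$ on a set $B'\subseteq[N]$ and $M$ is an integer exceeding $2^d$, then there exists $B\subseteq B'$ with $|B|\ge|B'|/M$ on which $\{\psi\}$ is locally polynomial of degree at most $d$. To prove this, partition $(-1/2,1/2]$ into $M$ intervals of length $1/M$ and let $B$ be the intersection of $B'$ with the preimage under $n\mapsto\{\psi(n)\}$ of a most-populous interval; pigeonhole gives the size bound. On any cube of dimension $d+1$ with vertices in $B$ the values $\{\psi(v)\}$ all lie in a common interval of length $1/M$, and since $\sum_{\omega\in\{0,1\}^{d+1}}(-1)^{|\omega|}=0$ this forces $|\Delta_{h_1,\ldots,h_{d+1}}\{\psi\}|\le 2^d/M<1$. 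On the other hand the identity $\{\psi\}=\psi-[\psi]$ together with $\Delta_{h_1,\ldots,h_{d+1}}\psi=0$ on the cube (which sits inside $B'$) gives $\Delta_{h_1,\ldots,h_{d+1}}\{\psi\}=-\Delta_{h_1,\ldots,h_{d+1}}[\psi]\in\Z$. The two observations together force the quantity to vanish.

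Applied iteratively at each bracket component of $\phi$ (processed in order of increasing depth), this lemma does the job. By the time a component $\{\psi\}$ is reached, the bracket components appearing inside $\psi$ have already been handled, so $\psi$ is a polynomial expression in $n$ and in these already-processed brackets, and an easy sum-and-product rule for local polynomiality (proved via the discrete Leibniz formula and the observation that every sub-cube of a cube in $B$ still has all its vertices in $B$) shows that $\psi$ is itself locally polynomial of degree at most $\deg\psi\le k$ on the current $B$. The lemma, applied with $M=2^{k+1}$, then refines $B$ so that $\{\psi\}$ is locally polynomial of its own degree as well, at the cost of a factor of $M$ in density. After all $c=|C(\phi)|$ bracket components have been handled, the accumulated $B$ has cardinality at least $N\cdot 2^{-c(k+1)}=\Omega_{k,c}(N)$, and $\phi$ itself, expressed as a polynomial of weighted degree at most $k$ in $n$ and the $\nu\in C(\phi)$ (with the weight of $\nu$ taken to be its bracket-degree), is locally polynomial of degree at most $k$ on $B$ by one more application of the sum-and-product rule.

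The main obstacle is really just the bookkeeping: one must verify the sum-and-product rule for local polynomiality (the key point being that every sub-cube of a cube in $B$ still has all of its vertices in $B$, so that Leibniz-type expansions of $\Delta_{h_1,\ldots,h_s}(f_1f_2)$ reduce to products $\Delta_Sf_1\cdot\Delta_{S^c}f_2$ at shifted arguments whose sub-cubes lie in $B$), and confirm that the bracket-degree bound $\deg\phi\le k$ really does control the weighted degree of $\phi$ as a polynomial in $n$ and its bracket components, both of which follow directly from Definition \ref{def:bracket.poly}.
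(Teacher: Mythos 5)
Your proof is correct and follows essentially the same strategy as the paper: pigeonhole the fractional parts of the bracket components into narrow intervals to produce a dense set, and show by induction on the structure of $\phi$ (sum, product, and fractional-part rules for local polynomiality) that $\phi$ is locally polynomial there; the paper carries this out via Proposition~\ref{prop:local.poly.set} together with Lemma~\ref{lem:pigeonhole.mod.1}, whereas you phrase it as an iterative refinement over the components, which is equivalent. Your argument for the key fractional-part step is a slightly cleaner variant of Lemma~\ref{lem:c.psi(n)}: instead of inducting on the degree via the identity $\Delta_h\{\nu\}=\{\Delta_h\nu\}$, you observe directly that $\Delta_{h_1,\ldots,h_{d+1}}\{\psi\}$ is an integer (since $\{\psi\}=\psi-[\psi]$ and $\Delta_{h_1,\ldots,h_{d+1}}\psi$ vanishes on the cube) of modulus at most $2^d/M<1$, hence zero.
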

The proof of this is straightforward, but will motivate much of what comes later. Before we embark on the main body of the proof, let us record the following trivial, but repeatedly useful, properties of fractional parts.
\begin{lemma}\label{lem:frac.part.difference}
Let $x,y\in\R/\Z$ and let $\delta<1/2$, and suppose that $\{x\}$ and $\{y\}$ both lie in a subinterval $I\subset(-1/2,1/2]$ of width $\delta$. Then
\begin{enumerate}
\renewcommand{\labelenumi}{(\roman{enumi})}
\item $\{x\}-\{y\}=\{x-y\}$, and this quantity lies in the interval $(-\delta,\delta)$;
\item if $I$ is centred on $0$ then we may additionally conclude that $\{x\}+\{y\}=\{x+y\}$ and that this quantity also lies in the interval $(-\delta,\delta)$.
\end{enumerate}
\end{lemma}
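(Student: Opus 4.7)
The plan is to prove both parts by exploiting uniqueness of representatives in $(-1/2,1/2]$: once one exhibits a value in that interval that is congruent to $x\pm y$ modulo $1$, it must equal $\{x\pm y\}$.

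For part (i), I would first observe that, since $\{x\}$ and $\{y\}$ both lie in the subinterval $I$ of width $\delta$, their difference $\{x\}-\{y\}$ satisfies $|\{x\}-\{y\}|<\delta$, and in particular this difference lies in $(-\delta,\delta)\subset(-1/2,1/2]$ (using $\delta<1/2$). Next, $\{x\}-\{y\}$ is by definition congruent to $x-y$ modulo $1$. Since there is a unique representative of $x-y$ in $(-1/2,1/2]$, and that representative is by convention $\{x-y\}$, this gives $\{x\}-\{y\}=\{x-y\}$, and the bound on the interval follows.

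For part (ii), the additional hypothesis that $I$ is centred on $0$ means $I\subset(-\delta/2,\delta/2]$, so that $|\{x\}|,|\{y\}|\le\delta/2$ and hence $|\{x\}+\{y\}|<\delta<1/2$. Thus $\{x\}+\{y\}\in(-1/2,1/2]$. Since $\{x\}+\{y\}$ is also congruent to $x+y$ modulo $1$, the same uniqueness argument as above identifies it with $\{x+y\}$, and the bound is already established.

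There is no real obstacle here; the lemma is essentially a bookkeeping statement about the chosen system of representatives, and the only subtlety worth flagging is the use of $\delta<1/2$ to ensure that the computed sum or difference does not escape the fundamental domain $(-1/2,1/2]$, which is precisely what makes the "$=$" and not merely "$\equiv\pmod 1$" conclusion legitimate.
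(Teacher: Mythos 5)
Your argument is correct, and it is the natural one: identify $\{x\}\mp\{y\}$ as a representative of $x\mp y$ modulo $1$, check that it falls inside the fundamental domain $(-1/2,1/2]$ using the width bound and $\delta<1/2$, and invoke uniqueness of the representative. The paper states this lemma without proof (describing it as ``trivial, but repeatedly useful''), and your write-up is exactly the argument the authors intend.
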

The proof of Proposition \ref{prop:brackets.locally.poly} is essentially in two parts. In the first (more substantial) part of the argument we identify a collection of sets on which $\phi$ is locally polynomial.
\begin{prop}\label{prop:local.poly.set}
Suppose that $\phi:[N]\to\R$ is a bracket polynomial of degree at most $k$. Then there exists a parameter $\delta\gg_k1$ such that if for each $\nu\in C(\phi)$ we have an interval $J_\nu$ of width at most $\delta$ inside $(-1/2,1/2]$ then $\phi$ is locally polynomial of degree $k$ on the set
\begin{displaymath}
\{n\in[N]:\{\nu(n)\}\in J_\nu\text{ for all $\nu\in C(\phi)$}\}.
\end{displaymath}
\end{prop}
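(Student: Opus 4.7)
The plan is to prove the proposition by structural induction on the recursive construction of $\phi$, taking $\delta = 2^{-k-1}$ (or any smaller positive constant depending only on $k$). The base case, in which $\phi$ is a genuine polynomial of degree at most $k$, is immediate: $\Delta_{h_1,\ldots,h_{k+1}}\phi \equiv 0$ identically on $[N]$, and $C(\phi) = \varnothing$ imposes no constraint on $B$. The cases $\phi = -\nu$ and $\phi = \nu_1 + \nu_2$ follow at once from the inductive hypothesis, since $C(-\nu) = C(\nu)$ and $C(\nu_1+\nu_2) = C(\nu_1)\cup C(\nu_2)$, so the hypothesised set $B$ is contained in the analogous sets for $\nu$ and for each $\nu_i$, and discrete derivatives are linear.

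The fractional-part step $\phi = \{\nu\}$ is the first place at which the bound on $\delta$ plays a role. By induction, $\Delta_{h_1,\ldots,h_{k+1}}\nu(n) = 0$ whenever all $2^{k+1}$ vertices $n+\omega\cdot h$ lie in the relevant set for $\nu$, and writing $\{\nu\}(m) = \nu(m) - [\nu(m)]$ yields $\Delta_{h_1,\ldots,h_{k+1}}\{\nu\}(n) = -\Delta_{h_1,\ldots,h_{k+1}}[\nu](n)$, which is necessarily an integer. On the other hand, the hypothesis $\{\nu(n+\omega\cdot h)\} \in J_\nu$ for every $\omega$, together with the identity $\sum_{\omega\in\{0,1\}^{k+1}}(-1)^{|\omega|} = 0$, allows us to subtract the midpoint of $J_\nu$ from each $\{\nu(n+\omega\cdot h)\}$ appearing in the signed sum without altering its value, yielding
\[
\bigl|\Delta_{h_1,\ldots,h_{k+1}}\{\nu\}(n)\bigr| \le 2^{k+1}\cdot (\delta/2) = 2^k\delta < 1.
\]
The unique integer strictly between $-1$ and $1$ is zero, as required.

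The main obstacle, and the only step requiring a genuinely new ingredient, is the product case $\phi = \nu_1 \cdot \nu_2$ with $\deg\nu_i \le k_i$ and $k_1+k_2 \le k$. Here I would iterate the discrete Leibniz rule $\Delta_h(fg)(n) = \Delta_h f(n)\cdot g(n+h) + f(n)\cdot\Delta_h g(n)$ to expand $\Delta_{h_1,\ldots,h_{k+1}}(\nu_1\nu_2)(n)$ as a sum, indexed by subsets $S\subseteq[k+1]$, of terms of the shape $(\Delta_{h_S}\nu_1)\bigl(n + \sum_{i\in S^c} h_i\bigr) \cdot (\Delta_{h_{S^c}}\nu_2)(n)$. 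Since $|S|+|S^c| = k+1 \ge k_1+k_2+1$, pigeonhole forces either $|S|\ge k_1+1$ or $|S^c|\ge k_2+1$ for each such $S$. Because $C(\nu_1), C(\nu_2) \subseteq C(\phi)$, every vertex appearing in either of these lower-order derivatives has the form $n+\omega\cdot h$ for some $\omega\in\{0,1\}^{k+1}$ and hence lies in $B$ by hypothesis; the inductive hypothesis applied to whichever of $\nu_1, \nu_2$ is being differentiated too many times then forces that factor, and hence the entire term, to vanish. Summing over $S$ completes the induction.
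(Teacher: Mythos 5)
Your argument is correct, and it proves the proposition by the same overall strategy as the paper — a structural induction on the recursive construction of $\phi$ — but the two key steps are handled differently from the paper's Lemmas \ref{lem:c.psi(n)} and \ref{lem:psi1.psi2}. For the fractional-part step, the paper proceeds one discrete derivative at a time: Lemma \ref{lem:c.psi(n)} first shows $\Delta_h\{\nu\}=\{\Delta_h\nu\}$ on $B\cap(B-h)$ via Lemma \ref{lem:frac.part.difference}, notes that $\{\Delta_h\nu\}$ then takes values in $(-\delta,\delta)\subset(-2^{-(k-1)},2^{-(k-1)})$, and inducts on the degree. You instead go straight to the $(k+1)$-th derivative and exploit the fact that it is simultaneously an integer (because $\{\nu\}=\nu-[\nu]$ and $\Delta_{h_1,\ldots,h_{k+1}}\nu$ vanishes by the inductive hypothesis) and of modulus at most $2^k\delta<1$ (by subtracting the midpoint of $J_\nu$ in the signed sum), so it must be zero. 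This ``integer plus small equals zero'' observation is a genuinely tidier route than the paper's, and it replaces the nested $2^{-k}$ width bookkeeping with a single estimate. For the product step, the paper's Lemma \ref{lem:psi1.psi2} also inducts one derivative at a time using the symmetric identity $\Delta_h(\nu_1\nu_2)=\Delta_h\nu_1\Delta_h\nu_2+\Delta_h\nu_1\cdot\nu_2+\nu_1\Delta_h\nu_2$, whereas you write out the full $(k+1)$-fold expansion via iterated Leibniz and then use the pigeonhole $|S|+|S^c|=k+1>k_1+k_2$; this is the same idea in a flattened form. Two small remarks: first, the single-step Leibniz rule you quote, $\Delta_h(fg)(n)=\Delta_h f(n)\cdot g(n+h)+f(n)\cdot\Delta_h g(n)$, actually iterates to terms $(\Delta_{h_S}\nu_1)(n)\cdot(\Delta_{h_{S^c}}\nu_2)(n+\sum_{i\in S}h_i)$ rather than the mirrored form you state — the form you state comes from the other valid variant $\Delta_h(fg)(n)=\Delta_h f(n)g(n)+f(n+h)\Delta_h g(n)$; both work since every evaluation point is still of the shape $n+\omega\cdot h$ with $\omega\in\{0,1\}^{k+1}$. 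Second, when invoking the inductive hypothesis on $\nu_1,\nu_2$ in the product step you are implicitly applying the proposition with the smaller degree parameter $k_i$ rather than $k$; this is fine because the interval width $\delta=2^{-k-1}$ is at most $2^{-k_i-1}$, but it is worth saying so that the choice of a single $\delta$ depending only on $k$ is visibly consistent across the induction.
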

The second part of the proof of Proposition~\ref{prop:brackets.locally.poly} is a simple pigeonholing argument, which we present as a lemma for ease of later reference, showing that at least one set of the form given by Propopsition~\ref{prop:local.poly.set} must have cardinality $\Omega_{k,c}(N)$.
\begin{lemma}\label{lem:pigeonhole.mod.1}
Let $I$ be an interval in $\R$, let $A\subset\N$ be a finite set and let $g_1,\ldots,g_l:A\to I$ be functions. Let $\delta_1,\ldots,\delta_l<|I|$. Then there exist subintervals $J_1,\ldots,J_l\subset I$ of widths $\delta_1,\ldots,\delta_l$, respectively, such that
\begin{displaymath}
|\{n\in A:g_i(n)\in J_i\text{ for $i=1,\ldots,l$}\}|\gg_{l}\frac{\delta_1\cdots\delta_l|A|}{|I|^l}.
\end{displaymath}
\end{lemma}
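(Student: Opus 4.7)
The plan is to prove this by a straightforward pigeonhole argument on a grid of sub-intervals. First, for each $i=1,\ldots,l$, I would cover $I$ by a family $\mathcal{P}_i$ of closed subintervals of width exactly $\delta_i$; since $\delta_i<|I|$ such subintervals of $I$ certainly exist, and a greedy construction (place intervals starting at $0,\delta_i,2\delta_i,\ldots$, shifting the final one leftwards if necessary so it remains inside $I$) yields a cover of size at most $\lceil |I|/\delta_i\rceil\le 2|I|/\delta_i$.

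Next, I would consider the product family $\mathcal{P}_1\times\cdots\times\mathcal{P}_l$, which has cardinality at most $2^l|I|^l/(\delta_1\cdots\delta_l)$. For each $n\in A$ and each $i$, the assumption $g_i(n)\in I$ together with the covering property allows me to choose some $J_i(n)\in\mathcal{P}_i$ containing $g_i(n)$; this assigns to $n$ a tuple $(J_1(n),\ldots,J_l(n))$ in the product family. By the pigeonhole principle, some fixed tuple $(J_1,\ldots,J_l)$ is assigned to at least
\[
\frac{|A|}{|\mathcal{P}_1|\cdots|\mathcal{P}_l|}\ge\frac{\delta_1\cdots\delta_l|A|}{2^l|I|^l}
\]
elements $n\in A$, and each such $n$ satisfies $g_i(n)\in J_i$ for every $i$. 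Since the implied constant in the conclusion may depend on $l$, this gives exactly the required bound.

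There is essentially no serious obstacle in this argument; the only point requiring mild care is the construction of the covers $\mathcal{P}_i$, where if $|I|/\delta_i$ is not an integer one must shift the last interval slightly so as not to leave $I$, at the cost of a small overlap with the penultimate interval. This overlap is harmless for the pigeonhole step, since we only need an upper bound on $|\mathcal{P}_i|$.
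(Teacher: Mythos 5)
Your argument is correct and is essentially the same as the paper's: cover (or partition) $I$ by $O(|I|/\delta_i)$ subintervals for each $i$, take the product to get $O_l(|I|^l/\delta_1\cdots\delta_l)$ boxes, and pigeonhole the map $(g_1,\ldots,g_l):A\to I^l$. The one small difference is that you cover $I$ by intervals of width exactly $\delta_i$ (shifting the last one), whereas the paper partitions $I$ and allows one remainder interval of width less than $\delta_i$; your variant is marginally cleaner in that it directly produces $J_i$ of width exactly $\delta_i$ as the statement asks, rather than needing to enlarge a possibly-shorter remainder interval afterwards.
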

\begin{proof}This is essentially contained in the first part of the proof of \cite[Lemma 4.20]{tao-vu}. For each $i=1,\ldots,l$ we divide $I$ into $\lceil|I|/\delta_i\rceil$ subintervals: $\lfloor|I|/\delta_i\rfloor$ of length $\delta_i$, and at most one, the remainder, of length less than $\delta_i$.

Taking all products of these subintervals inside $I^l$, we divide $I^l$ into $\prod_{i=1}^l\lceil|I|/\delta_i\rceil$ boxes of side lengths at most $\delta_1,\ldots,\delta_l$. By the pigeonhole principle, one of these boxes must contain the images of at least
\begin{equation}\label{eq:A3608}
\frac{|A|}{\prod_{i=1}^l\lceil|I|/\delta_i\rceil}
\end{equation}
elements of $A$ under the map
\[
(g_1,\ldots,g_l):A\to I^l.
\]
Now (\ref{eq:A3608}) is certainly at least
\[
\frac{\delta_1\cdots\delta_l|A|}{2^l|I|^l},
\]
and so the lemma is proved.
\end{proof}
Proposition~\ref{prop:local.poly.set} follows more or less immediately from the following results, the proofs of which occupy the remainder of this section.
\begin{lemma}\label{lem:c.psi(n)}
Let $B\subset\N$, let $\nu:\N\to\R$, and suppose that $\{\nu\}(B)$ is contained within an interval $J\subset(-1/2,1/2]$ of width $\delta<1/2$. Then $\Delta_h\{\nu\}(n)=\{\Delta_h\nu\}(n)$ for every $n\in B\cap(B-h)$. Furthermore, if $\nu$ is locally polynomial of degree at most $k$ on $B$ and $\delta<2^{-k}$ then $\{\nu\}$ is also locally polynomial of degree at most $k$ on $B$.
\end{lemma}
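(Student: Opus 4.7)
The first assertion is an immediate application of Lemma~\ref{lem:frac.part.difference}(i). For any $n \in B \cap (B-h)$, both $n$ and $n+h$ lie in $B$, so $\{\nu(n)\}$ and $\{\nu(n+h)\}$ both lie in the same interval $J$ of width $\delta < 1/2$. Taking $x = \nu(n+h)$ and $y = \nu(n)$ in that lemma gives
\[
\Delta_h \{\nu\}(n) = \{\nu(n+h)\} - \{\nu(n)\} = \{\nu(n+h) - \nu(n)\} = \{\Delta_h \nu\}(n).
\]

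For the second assertion I would not attempt to iterate the first assertion $k+1$ times, since the a priori range of an iterated discrete derivative doubles at each stage and one would obtain only the weaker hypothesis $\delta < 2^{-(k+1)}$. Instead I plan to compare $\Delta_{h_1,\ldots,h_{k+1}}\{\nu\}(n)$ with $\Delta_{h_1,\ldots,h_{k+1}}\nu(n)$ directly, using the standard expansion
\[
\Delta_{h_1,\ldots,h_{k+1}} f(n) = \sum_{\omega \in \{0,1\}^{k+1}} (-1)^{k+1-|\omega|} f(n + \omega \cdot h).
\]
Fix $n$ and $h_1,\ldots,h_{k+1}$ such that $n + \omega \cdot h \in B$ for every $\omega \in \{0,1\}^{k+1}$. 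Because $\{\nu(m)\} - \nu(m) \in \Z$ for every $m$, subtracting the two expansions shows that $\Delta_{h_1,\ldots,h_{k+1}}\{\nu\}(n) - \Delta_{h_1,\ldots,h_{k+1}}\nu(n) \in \Z$. The locally polynomial hypothesis on $\nu$ makes the second term vanish, so $\Delta_{h_1,\ldots,h_{k+1}}\{\nu\}(n)$ is itself an integer.

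It then remains to show that this integer is $0$, which I plan to do by bounding $|\Delta_{h_1,\ldots,h_{k+1}}\{\nu\}(n)|$ strictly below $1$. The key centering trick is to subtract the midpoint $c$ of $J$ from each value $\{\nu(n + \omega \cdot h)\}$; this is legitimate because $\sum_\omega (-1)^{k+1-|\omega|} = 0$, and it replaces each term of the signed sum by a quantity of magnitude at most $\delta/2$. Summing over the $2^{k+1}$ vertices of the parallelepiped then yields $|\Delta_{h_1,\ldots,h_{k+1}}\{\nu\}(n)| \le 2^k \delta < 1$ by hypothesis, forcing $\Delta_{h_1,\ldots,h_{k+1}}\{\nu\}(n) = 0$ as required.

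The delicate point of the argument is this last numerical bound: centering at the midpoint of $J$ rather than at an endpoint is what converts the naive bound $2^{k+1}\delta$ into $2^k\delta$, and so is exactly what matches the stated hypothesis $\delta < 2^{-k}$ rather than a more wasteful one.
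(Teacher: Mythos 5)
Your proof is correct, and for the second assertion you take a genuinely different route from the paper. The paper argues by induction on $k$: applying the first assertion with $h = h_1$, it reduces to showing that $\{\Delta_{h_1}\nu\}$ is locally polynomial of degree $k-1$ on $B \cap (B - h_1)$, noting that $\Delta_{h_1}\nu$ is locally polynomial of degree $k-1$ there while the image of $\{\Delta_{h_1}\nu\}$ lies in $(-\delta,\delta)$, an interval of width $2\delta < 2^{-(k-1)}$, so the inductive hypothesis applies. The induction lowers the degree at the same time as it doubles the width, so the threshold $\delta < 2^{-k}$ is preserved exactly at each step; the paper's argument therefore does not degrade to $\delta < 2^{-(k+1)}$ as you anticipated, because it is a degree-lowering induction rather than a blind iteration of the first assertion all the way to the $(k+1)$st derivative. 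Your direct argument instead observes that $\Delta_{h_1,\ldots,h_{k+1}}\{\nu\}(n)$ differs from $\Delta_{h_1,\ldots,h_{k+1}}\nu(n) = 0$ by an integer, and then forces it to vanish via the midpoint-centering bound $|\Delta_{h_1,\ldots,h_{k+1}}\{\nu\}(n)| \le 2^{k+1}\cdot(\delta/2) = 2^k\delta < 1$. Both arguments are valid and give the same numerical threshold; yours has the virtue of being non-inductive, of making the origin of the constant $2^{-k}$ transparent in a single step, and of being the same ``integer plus small'' pattern that the paper itself uses later in the proof of Lemma~\ref{lem:simple.deriv}.
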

\begin{lemma}\label{lem:psi1.psi2}
Let $B\subset\N$ and let $\nu_1,\nu_2:\N\to\R$ be functions. Suppose that $\nu_1,\nu_2$ are locally polynomial of degree at most $k_1,k_2$, respectively, on $B$. Then $\nu_1+\nu_2$ is locally polynomial of degree at most $\max\{k_1,k_2\}$ on $B$ and $\nu_1\cdot\nu_2$ is locally polynomial of degree at most $k_1+k_2$ on $B$.
\end{lemma}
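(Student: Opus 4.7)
My plan splits by operation: the sum case rests on linearity of the iterated discrete derivative, whereas the product case needs a discrete Leibniz rule together with a pigeonhole argument.

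Before handling either case I would first record the ``monotonicity'' observation that if $\nu$ is locally polynomial of degree at most $k_0$ on $B$ then for any $K\ge k_0+1$ and any $n,h_1,\ldots,h_K$ with $n+\omega\cdot h\in B$ for every $\omega\in\{0,1\}^K$, one still has $\Delta_{h_1,\ldots,h_K}\nu(n)=0$. The quick proof is to write
\[
\Delta_{h_1,\ldots,h_K}\nu(n) \;=\; \sum_{\omega''\in\{0,1\}^{K-k_0-1}} (-1)^{K-k_0-1-|\omega''|}\;\Delta_{h_1,\ldots,h_{k_0+1}}\nu\Bigl(n+\sum_{i>k_0+1}\omega''_i h_i\Bigr),
\]
and to observe that for each fixed $\omega''$ the cube $\{n+\sum_{i>k_0+1}\omega''_i h_i+\omega'\cdot(h_1,\ldots,h_{k_0+1}):\omega'\in\{0,1\}^{k_0+1}\}$ is a sub-cube of the cube in $B$ provided by hypothesis, so each summand vanishes by the definition of locally polynomial of degree $\le k_0$. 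With this in hand, the sum case is immediate: since $\Delta_{h_1,\ldots,h_K}$ is a signed sum of point evaluations, it is linear, and taking $K=\max\{k_1,k_2\}+1$ kills both $\nu_1$ and $\nu_2$ pieces on any cube of side $K$ in $B$.

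For the product I would prove by induction on $k$ the discrete Leibniz formula
\[
\Delta_{h_1,\ldots,h_k}(fg)(n) \;=\; \sum_{S\subseteq[k]}\; \Delta_{h_S}f\Bigl(n+\sum_{i\notin S}h_i\Bigr)\cdot\Delta_{h_{[k]\setminus S}}g(n),
\]
where $\Delta_{h_T}$ denotes the iterated derivative in the variables indexed by $T$ (with $\Delta_{h_\varnothing}f=f$). The base case $k=1$ is just $\Delta_h(fg)(n)=\Delta_h f(n)\,g(n)+f(n+h)\,\Delta_h g(n)$, and the inductive step comes from applying $\Delta_{h_{k+1}}$ to this identity and using the same one-variable product rule on each summand, rearranging the shifts so that the summation over subsets of $[k+1]$ appears. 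Taking $k=k_1+k_2+1$, for every $S\subseteq[k]$ the equality $|S|+|[k]\setminus S|=k_1+k_2+1$ forces either $|S|\ge k_1+1$ or $|[k]\setminus S|\ge k_2+1$; in the first case the $\nu_1$-factor $\Delta_{h_S}\nu_1(n+\sum_{i\notin S}h_i)$ vanishes by the monotonicity observation applied to $\nu_1$ at the shifted base point, and in the second case the $\nu_2$-factor vanishes similarly.

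The only slightly fiddly point is verifying, in each application of monotonicity, that the required sub-cube still sits inside $B$; but the vertices $(n+\sum_{i\notin T}h_i)+\omega'\cdot h_T$ for $\omega'\in\{0,1\}^{|T|}$ are exactly the points $n+\omega\cdot h$ for those $\omega\in\{0,1\}^k$ with $\omega_i=1$ for $i\notin T$, and all such points lie in $B$ by hypothesis. So the expected main obstacle, namely keeping the bookkeeping of shifts in the Leibniz expansion straight, dissolves once written out carefully, and pigeonhole on the subset sizes then finishes both statements.
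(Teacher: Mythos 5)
Your proof is correct, and it takes a genuinely different route from the paper's. The paper's argument for the product is a one-step identity
\[
\Delta_h(\nu_1\nu_2)=\Delta_h\nu_1\cdot\Delta_h\nu_2+\Delta_h\nu_1\cdot\nu_2+\nu_1\cdot\Delta_h\nu_2,
\]
combined with induction on the total degree $k_1+k_2$: each of the three summands on the right is a product of two functions whose degrees sum to at most $k_1+k_2-1$, so by the inductive hypothesis and the already-proved sum case, $\Delta_h(\nu_1\nu_2)$ is locally polynomial of degree at most $k_1+k_2-1$ on $B\cap(B-h)$, and the claim follows. Your argument instead unrolls the full $k$-fold discrete Leibniz expansion over subsets $S\subseteq[k]$ and, after establishing the monotonicity observation (that local-polynomiality of degree $\le k_0$ kills all derivatives of order $\ge k_0+1$ on cubes inside $B$), kills every term of the Leibniz sum by pigeonhole on $|S|+|[k]\setminus S|=k_1+k_2+1$. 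Both proofs are sound; yours trades the paper's clean induction on degree for an explicit closed formula whose verification needs its own induction on the number of derivatives, together with careful bookkeeping of the shifts in the base points. A small merit of your write-up is that you make the monotonicity step explicit: the paper implicitly relies on it (e.g.\ in the sum case, to pass from local-polynomiality of degree $k_1-1$ to degree $\max\{k_1,k_2\}-1$ when $k_1<k_2$) without flagging it. One minor point worth adding for completeness: your final paragraph checks the sub-cube condition only for the $\nu_1$-factor at the shifted base point; the $\nu_2$-factor $\Delta_{h_{[k]\setminus S}}\nu_2(n)$ is at the unshifted base point, and the relevant sub-cube $\{n+\omega\cdot h:\omega_i=0\text{ for }i\in S\}$ is again a face of the original cube, so it lies in $B$ for the same reason, but it is a slightly different face and deserves a word.
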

\begin{proof}[Proof of Lemma~\ref{lem:c.psi(n)}]
By Lemma~\ref{lem:frac.part.difference} (i) we have
\begin{align*}
\Delta_h\{\nu\}(n) &= \{\nu(n+h)\}-\{\nu(n)\}\\
                    &= \{\nu(n+h)-\nu(n)\}\\
                    &= \{\Delta_h\nu(n)\},
\end{align*}
which was the first conclusion of the lemma. If $\delta<2^{-k}$ then Lemma~\ref{lem:frac.part.difference} (i) also implies that the image of $\{\Delta_h\nu\}$ is contained within the interval $(-2^{-k},2^{-k}]$, and by induction we may therefore assume that $\{\Delta_h\nu\}$ is locally polynomial of degree $k-1$ on $B\cap(B-h)$, and hence conclude that $\{\nu\}$ is locally polynomial of degree $k$ on $B$.
\end{proof}
\begin{proof}[Proof of Lemma~\ref{lem:psi1.psi2}]
The assertion about $\nu_1+\nu_2$ is immediate from the linearity of $\Delta_h$. To prove the assertion about $\nu_1\cdot\nu_2$, note that
\begin{align*}
\Delta_h(\nu_1\cdot\nu_2)(n) &= \nu_1(n+h)\nu_2(n+h)-\nu_1(n)\nu_2(n) \\
                               &= (\Delta_h\nu_1(n)+\nu_1(n))(\Delta_h\nu_2(n)+\nu_2(n))-\nu_1(n)\nu_2(n) \\
                               &= \Delta_h\nu_1(n)\Delta_h\nu_2(n)+\Delta_h\nu_1(n)\nu_2(n)+\nu_1(n)\Delta_h\nu_2(n),
\end{align*}
and so $\Delta_h(\nu_1\cdot\nu_2)=\Delta_h\nu_1\cdot\Delta_h\nu_2+\Delta_h\nu_1\cdot\nu_2+\nu_1\cdot\Delta_h\nu_2$. By induction each of these terms is locally polynomial of degree at most $k_1+k_2-1$ on $B\cap(B-h)$, and so $\Delta_h(\nu_1\cdot\nu_2)$ is locally polynomial of degree at most $k_1+k_2-1$ on $B\cap(B-h)$ by the first part of the lemma.
\end{proof}
\section{Strongly locally polynomial functions}\label{sec:strong.local.poly}
Let $\phi:[N]\to\R$ be a function and define $f:[N]\to\C$ by $f(n):=e(\phi(n))$. In this section we develop a criterion for $f$ to have a positive Gowers $U^k$-norm. Given a set $B\subset[N]$ of cardinality $\Omega(N)$ on which $\phi$ is locally polynomial of degree $k-1$ it is straightforward to check that $\|1_B\cdot f\|_{U^k[N]}\gg_k1$. Indeed, given that
\begin{align*}
\|1_B\cdot f\|_{U^k[N]}&\gg_k\E_{n\in[N],h\in[-N,N]^k}\left(e(\Delta_{h_1,\ldots,h_k}\phi(n))\textstyle\prod_{\omega\in\{0,1\}^k}1_B(n+\omega\cdot h)\right)\\
&=\E_{n\in[N],h\in[-N,N]^k}\left(\textstyle\prod_{\omega\in\{0,1\}^k}1_B(n+\omega\cdot h)\right)\\
&=\Prob_{n\in[N],h\in[-N,N]^k}(n+\omega\cdot h\in B\text{ for all }\omega\in\{0,1\}^k),
\end{align*}
it is an immediate consequence of the following lemma.
\begin{lemma}\label{lem:boxes.for.gcs}Suppose $B\subset[N]$ satisfies $|B|\ge\sigma N$ and for $j=0,1,2,\ldots$ write
\begin{displaymath}
B_j:=\{(n,h)\in[N]\times[-N,N]^j:n+\omega\cdot h\in B\text{ for all }\omega\in\{0,1\}^j\}.
\end{displaymath}
Then $|B_j|\gg_{j,\sigma}N^{j+1}$.
\end{lemma}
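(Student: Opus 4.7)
The plan is to induct on $j$, splitting off the last difference variable $h_j$ and reducing to the analogous count for the shifted intersection $B \cap (B - h_j)$.

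For the base case $j = 0$ the claim is trivial since $B_0 = B$. For the inductive step, separate the variable $h_j$ from $h' := (h_1,\ldots,h_{j-1})$ and from $\omega = (\omega',\omega_j)$: noting that, for any fixed $h_j$,
\[
\prod_{\omega \in \{0,1\}^j} 1_B(n+\omega\cdot h) = \prod_{\omega'\in\{0,1\}^{j-1}} 1_B\bigl(n+\omega'\cdot h'\bigr)\,1_B\bigl(n+\omega'\cdot h'+h_j\bigr) = \prod_{\omega'} 1_{B\cap(B-h_j)}(n+\omega'\cdot h'),
\]
I can write $|B_j| = \sum_{h_j \in [-N,N]} M(h_j)$, where $M(h_j)$ is precisely the quantity appearing in the statement of the lemma (with the same parameter $j-1$) applied to the set $B' := B \cap (B-h_j) \subset [N]$.

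The next step is to show that for a positive proportion of values of $h_j$, the set $B'$ is still dense in $[N]$. The identity $\sum_{h_j \in \Z} |B \cap (B-h_j)| = |B|^2$ gives $\sum_{h_j \in [-N,N]} |B \cap (B-h_j)| \geq \sigma^2 N^2$, so the average value of $|B'|$ is $\gg \sigma^2 N$; a routine Markov-type inequality (splitting the sum at threshold $\sigma^2 N/6$) then produces $\gg_\sigma N$ values of $h_j$ with $|B'| \geq \tfrac{\sigma^2}{6} N$. For each such $h_j$, the inductive hypothesis applied to $B' \subset [N]$ with density parameter $\sigma' := \sigma^2/6$ yields $M(h_j) \gg_{j-1,\sigma'} N^j$. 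Summing over these good values of $h_j$ gives $|B_j| \gg_{j,\sigma} N \cdot N^j = N^{j+1}$, as required.

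There is no real obstacle here: the one point requiring mild care is the passage from the averaged bound $\sum_{h_j}|B\cap(B-h_j)| \geq \sigma^2 N^2$ to the existence of $\gg_\sigma N$ individual values $h_j$ with $|B\cap(B-h_j)|$ comparable to the mean, but this is a standard first-moment argument. An alternative route, which one could use instead, is to embed $B \subset [N]$ into a cyclic group $\Z/\tilde{N}\Z$ with $\tilde{N}$ large enough that no wraparound occurs for $(n,h)\in[N]\times[-N,N]^j$, and appeal to the Gowers--Cauchy--Schwarz inequality $\|1_B\|_{U^j(\Z/\tilde{N}\Z)}^{2^j} \geq (|B|/\tilde{N})^{2^j}$ to bound $|B_j|$ from below; the inductive argument above, however, keeps the exposition entirely self-contained.
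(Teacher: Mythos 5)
Your argument is correct and reaches the same conclusion, but the decomposition in the inductive step is genuinely different from the paper's. Both proofs induct on $j$. The paper fixes the first $j-1$ difference variables $h'=(h_1,\ldots,h_{j-1})$, sets $r(h'):=|\{n\in[N]:n+\omega'\cdot h'\in B\text{ for all }\omega'\in\{0,1\}^{j-1}\}|$, observes that $B_j$ is parametrised by triples $(n,h',m)$ with $n,m$ both in this fibre (taking $h_j:=m-n$), and deduces $|B_j|=\sum_{h'}r(h')^2\gg_j(\sum_{h'}r(h'))^2/N^{j-1}=|B_{j-1}|^2/N^{j-1}$ by Jensen's inequality, closing the induction in one line. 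You instead peel off the \emph{last} variable $h_j$ and fold the remaining configuration condition into the intersection $B':=B\cap(B-h_j)$, reducing $|B_j|$ to a sum of $(j-1)$-dimensional instances applied to varying sets $B'$; you then need a first-moment/Markov step (via $\sum_{h_j}|B\cap(B-h_j)|=|B|^2$) to show $B'$ is $\gg_\sigma$-dense for $\gg_\sigma N$ values of $h_j$ before the inductive hypothesis can be invoked. Both routes are Cauchy--Schwarz at heart and both give bounds degrading like $\sigma^{2^j}$, but the paper's version is a touch slicker: Jensen handles all fibres at once, so one never needs to discard the sparse intersections. Your argument is closer in spirit to the van der Corput / Gowers--Cauchy--Schwarz presentation, as your closing remark correctly notes.
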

\begin{proof}The claim is trivial for the case $j=0$, and so we may proceed by induction, assuming that
\begin{equation}\label{eq:boxes.for.gcs.1}
|B_{j-1}|\gg_{j,\sigma}N^j.
\end{equation}
For each $h\in[-N,N]^{j-1}$ write $C(h):=\{n\in[N]:n+\omega\cdot h\in B\text{ for all }\omega\in\{0,1\}^{j-1}\}$ and set $r(h):=|C(h)|$; thus
\begin{equation}\label{eq:boxes.for.gcs.2}
|B_{j-1}|=\sum_{h\in[-N,N]^{j-1}}r(h).
\end{equation}
Now note simply that $B_j=\{(n,(h_1,\ldots,h_{j-1},m-n)):(h_1,\ldots,h_{j-1})\in[-N,N]^{j-1};m,n\in C(h)\}$, and so $|B_j|=\sum_{h\in[-N,N]^{j-1}}r(h)^2$, which is at least $\Omega_j\left(\left(\sum_{h\in[-N,N]^{j-1}}r(h)\right)^2/N^{j-1}\right)$ by Jensen's inequality. By (\ref{eq:boxes.for.gcs.2}) this is at least $\Omega_j(|B_{j-1}|^2/N^{j-1})$, which in turn is at least $\Omega_{j,\sigma}(N^{j+1})$ by (\ref{eq:boxes.for.gcs.1}).
\end{proof}
Our task, then, if we wish to prove that $f$ is non-uniform, is to remove the $1_B$ from the expression $\|1_B\cdot f\|_{U^k[N]}\gg_k1$. However, if we are to make use of the local behaviour of $\phi$ on $B$ then the set $B$ will have to play at least some role.

The key to reconciling this is the following.
\begin{lemma}\label{lem:application.of.gcs}
Let $\phi:[N]\to\R$ be a function and define $f:[N]\to\C$ by $f(n):=e(\phi(n))$. Then for every set $B\subset[N]$ we have
\begin{equation}\label{eq:application.of.gcs}
\|f\|_{U^k[N]}\gg_k\left|\E_{n\in[N],h\in[-N,N]^k}\left(e(\Delta_{h_1,\ldots,h_k}\phi(n))\textstyle\prod_{\omega\in\{0,1\}^k\backslash\{\mathbf0\}}1_B(n+\omega\cdot h)\right)\right|.
\end{equation}
\end{lemma}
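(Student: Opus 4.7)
The plan is to recognise the expression inside the absolute value on the right-hand side of (\ref{eq:application.of.gcs}) as a Gowers-type inner product of the mixed collection of functions $(F_\omega)_{\omega\in\{0,1\}^k}$ defined by $F_{\mathbf 0}=\tilde f$ and $F_\omega=\tilde f\cdot\tilde{1}_B$ for $\omega\ne\mathbf 0$, and then to apply the Gowers--Cauchy--Schwarz inequality. Iterating (\ref{eq:mult.deriv}) gives
\[
e(\Delta_{h_1,\ldots,h_k}\phi(n))=\prod_{\omega\in\{0,1\}^k}\mathcal{C}^{k-|\omega|}f(n+\omega\cdot h),
\]
where $\mathcal{C}$ denotes complex conjugation, whenever each $n+\omega\cdot h$ lies in $[N]$. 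Since the factors $1_B(n+\omega\cdot h)$ for $\omega\ne\mathbf 0$ already force $n+\omega\cdot h\in B\subset[N]$, the right-hand side of (\ref{eq:application.of.gcs}) can be rewritten as
\[
\Bigl|\E_{n\in[N],h\in[-N,N]^k}f(n)\prod_{\omega\ne\mathbf 0}\mathcal{C}^{k-|\omega|}(f\cdot 1_B)(n+\omega\cdot h)\Bigr|.
\]
I would then fix some $\tilde N\ge 2^{k+1}N$ large enough that no wrap-around occurs in $\Z/\tilde N\Z$ for points $n+\omega\cdot h$ with $n\in[N]$ and $h\in[-N,N]^k$, extend $f$ and $1_B$ by zero to $\tilde f,\tilde{1}_B:\Z/\tilde N\Z\to\mathcal D$, and convert the $[N]$-average into a uniform average over $\Z/\tilde N\Z$ at the cost of a factor of $\Theta_k(\tilde N^{k+1}/N^{k+1})$.

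The next step is to apply the standard Gowers--Cauchy--Schwarz inequality to the resulting $\Z/\tilde N\Z$-average $A$, obtaining
\[
A\le\|\tilde f\|_{U^k(\Z/\tilde N\Z)}\prod_{\omega\ne\mathbf 0}\|\tilde f\cdot\tilde{1}_B\|_{U^k(\Z/\tilde N\Z)}.
\]
Since $|\tilde f\cdot\tilde{1}_B|\le\tilde{1}_{[N]}$ pointwise and the Gowers norm of a bounded function is controlled by that of its modulus (by termwise absolute values in the expansion of $\|\cdot\|_{U^k}^{2^k}$ followed by monotonicity of that expansion on nonnegative functions), each of the $2^k-1$ factors $\|\tilde f\cdot\tilde{1}_B\|_{U^k}$ is at most $\|\tilde{1}_{[N]}\|_{U^k(\Z/\tilde N\Z)}$. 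Dividing through by $\|\tilde{1}_{[N]}\|_{U^k(\Z/\tilde N\Z)}^{2^k}$ and invoking Definition \ref{def:[N]-norm} converts $\|\tilde f\|_{U^k}/\|\tilde{1}_{[N]}\|_{U^k}$ into $\|f\|_{U^k[N]}$.

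It remains to check that the various multiplicative constants balance. A direct counting argument shows that $\|\tilde{1}_{[N]}\|_{U^k(\Z/\tilde N\Z)}^{2^k}=\Theta_k(N^{k+1}/\tilde N^{k+1})$, which precisely cancels the conversion factor introduced earlier, leaving an absolute constant depending only on $k$. The main obstacle, conceptually, is the correct asymmetric setup of Gowers--Cauchy--Schwarz with a ``missing'' indicator in the $\omega=\mathbf 0$ corner: this asymmetry is precisely what permits us to relate the partially localised average on the right-hand side of (\ref{eq:application.of.gcs}) to the full Gowers norm of $f$, since we can afford to lose trivial factors of $\|\tilde{1}_{[N]}\|_{U^k}$ at the $2^k-1$ other corners, each of which cancels cleanly against the denominator in Definition \ref{def:[N]-norm}.
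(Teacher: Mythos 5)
Your proposal follows essentially the same route as the paper's proof: apply Gowers--Cauchy--Schwarz over $\Z/\tilde N\Z$ with $g_{\mathbf 0}=\tilde f$ and $g_\omega=\tilde f\cdot\tilde 1_B$ for $\omega\ne\mathbf 0$, then normalise by $\|\tilde 1_{[N]}\|_{U^k(\Z/\tilde N\Z)}^{2^k}$. The paper compresses the rest into the single observation that $\|1_B\cdot f\|_{U^k[N]}\le 1$, whereas you unpack the same normalisation and the $[N]$-to-$\Z/\tilde N\Z$ conversion explicitly; the content is identical.
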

\begin{proof}
Given a finite abelian group $G$ and functions $g_\omega:G\to\C$ indexed by $\omega\in\{0,1\}^k$, the \emph{Gowers inner product} $\langle g_\omega\rangle_{U^k(G)}$ is defined by
\begin{displaymath}
\langle g_\omega\rangle_{U^k(G)}:=\E_{x\in G,h\in G^k}\textstyle\prod_{\omega\in\{0,1\}^k}\mathcal{C}^{|\omega|}g_\omega(x+\omega\cdot h).
\end{displaymath}
Here we denote by $\mathcal{C}$ the operation of complex conjugation, and by $|\omega|$ the number of entries of $\omega$ that are equal to $1$. The \emph{Gowers--Cauchy--Schwarz inequality} \cite[(11.6)]{tao-vu} states that
\[
|\langle g_\omega\rangle_{U^k(G)}|\le\prod_{\omega\in\{0,1\}^k}\|g_\omega\|_{U^k(G)}.
\]
Since $\|1_B\cdot f\|_{U^k[N]}\le1$, setting $g_\mathbf{0}=f$ and $g_\omega=1_B\cdot f$ in this inequality, with $G=\Z/\tilde{N}\Z$ as in Definition \ref{def:[N]-norm}, yields the desired result.
\end{proof}

In order to conclude that $f$ is non-uniform it will therefore be sufficient to find a set $B$ for which we are able to place a lower bound on the right-hand side of (\ref{eq:application.of.gcs}). In this context it is natural to employ a slightly stronger notion than that of being locally polynomial.
\begin{definition}[Strongly locally polynomial]
Let $\phi:[N]\to\R$ be a function and let $B\subset[N]$. Then $\phi$ is said to be \emph{strongly locally polynomial of degree $k-1$} on $B$ if whenever $n\in[N]$ and $h\in[-N,N]^k$ satisfy $n+\omega\cdot h\in B$ for all $\omega\in\{0,1\}^k\backslash\{\mathbf0\}$ we have $\Delta_{h_1,\ldots,h_k}\phi(n)=0$.
\end{definition}
\begin{example}Suppose that $\phi(n)=\alpha n\{\beta n\}.$ Then $\phi$ is strongly locally quadratic on the set
\[
B:=\{n\in[N]:\{\beta n\}\in(-1/16,1/16)\}.
\]
Indeed, suppose that $n\in[N]$ and $h\in[-N,N]^3$ satisfy $n+\omega\cdot h\in B$ for all $\omega\in\{0,1\}^3\backslash(0,0,0)$. Then Lemma~\ref{lem:frac.part.difference} implies that $\{\beta n\}\in(-1/4,1/4)$, and so for each $\omega\in\{0,1\}^3$ (including $\omega=(0,0,0)$) the point $n+\omega\cdot h$ lies in
\[
B':=\{n\in[N]:\{\beta n\}\in(-1/4,1/4)\}.
\]
Moreover, $\phi$ is locally quadratic on $B'$ by Lemmas \ref{lem:c.psi(n)} and \ref{lem:psi1.psi2}, and so $\Delta_{h_1,h_2,h_3}\phi(n)=0$.
\end{example}
The reason for making this definition is the following result.
\begin{prop}\label{prop:strong.local.poly}
Let $\phi:[N]\to\R$ be a function and define $f:[N]\to\C$ by $f(n):=e(\phi(n))$. Suppose that $\phi$ is strongly locally polynomial of degree $k-1$ on a set $B\subset[N]$ of cardinality $\sigma N$. Then $\|f\|_{U^k[N]}\gg_{k,\sigma}1$.
\end{prop}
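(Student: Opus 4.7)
The plan is to combine Lemma~\ref{lem:application.of.gcs} with Lemma~\ref{lem:boxes.for.gcs} almost directly; the definition of strongly locally polynomial has been engineered precisely so that these two ingredients slot together.

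First, I would apply Lemma~\ref{lem:application.of.gcs} with the given set $B$, giving
\[
\|f\|_{U^k[N]}\gg_k\left|\E_{n\in[N],h\in[-N,N]^k}\Bigl(e(\Delta_{h_1,\ldots,h_k}\phi(n))\prod_{\omega\in\{0,1\}^k\setminus\{\mathbf 0\}}1_B(n+\omega\cdot h)\Bigr)\right|.
\]
The crucial observation is that the product of indicators vanishes unless $n+\omega\cdot h\in B$ for every $\omega\in\{0,1\}^k\setminus\{\mathbf 0\}$, and on that event the strongly locally polynomial hypothesis forces $\Delta_{h_1,\ldots,h_k}\phi(n)=0$, whence $e(\Delta_{h_1,\ldots,h_k}\phi(n))=1$. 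So at every $(n,h)$ the integrand equals $\prod_{\omega\ne\mathbf 0}1_B(n+\omega\cdot h)$, which is non-negative; we may drop the absolute value and reduce to showing
\[
\E_{n\in[N],h\in[-N,N]^k}\prod_{\omega\in\{0,1\}^k\setminus\{\mathbf 0\}}1_B(n+\omega\cdot h)\gg_{k,\sigma}1.
\]

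Second, I would bound this from below by the expectation of the product over the whole of $\{0,1\}^k$ (dropping one indicator can only increase the quantity since indicators are in $[0,1]$):
\[
\E_{n\in[N],h\in[-N,N]^k}\prod_{\omega\in\{0,1\}^k\setminus\{\mathbf 0\}}1_B(n+\omega\cdot h)\ge\E_{n\in[N],h\in[-N,N]^k}\prod_{\omega\in\{0,1\}^k}1_B(n+\omega\cdot h)=\frac{|B_k|}{N(2N+1)^k}.
\]
Lemma~\ref{lem:boxes.for.gcs}, applied with $j=k$ and the given density $\sigma$, yields $|B_k|\gg_{k,\sigma}N^{k+1}$, and the right-hand side is therefore $\gg_{k,\sigma}1$, completing the argument.

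There is no real obstacle here; the only thing to be careful about is the orientation of the asymmetry in Lemma~\ref{lem:application.of.gcs} (the missing $\omega=\mathbf 0$ indicator) matching the definition of strongly locally polynomial (the constraint at all $\omega\ne\mathbf 0$). The asymmetry is harmless for the lower bound because passing from a product over $\{0,1\}^k\setminus\{\mathbf 0\}$ to one over all of $\{0,1\}^k$ only shrinks the integrand, so the counting estimate of Lemma~\ref{lem:boxes.for.gcs} applies without modification.
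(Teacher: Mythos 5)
Your proposal is correct and is precisely the argument the paper has in mind: the paper's proof is literally the one-line remark that the proposition is ``a straightforward combination of Lemmas~\ref{lem:boxes.for.gcs} and \ref{lem:application.of.gcs},'' and you have filled in exactly the right details. In particular, your observation that the missing $\omega=\mathbf{0}$ indicator in Lemma~\ref{lem:application.of.gcs} matches the exclusion of $\mathbf{0}$ in the definition of strongly locally polynomial, and that reinstating the $\omega=\mathbf{0}$ indicator only shrinks the (already non-negative) integrand before applying Lemma~\ref{lem:boxes.for.gcs}, is exactly the intended reconciliation.
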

\begin{proof}This is a straightforward combination of Lemmas \ref{lem:boxes.for.gcs} and \ref{lem:application.of.gcs}.
\end{proof}

In light of Proposition~\ref{prop:strong.local.poly}, if we wish to prove that $\|f\|_{U^{k+1}[N]}\gg1$ then it is sufficient to find a set $B\subset[N]$ satisfying $|B|\gg N$ on which $\phi$ is strongly locally polynomial of degree at most $k$. Unfortunately, we are not able just to take $B$ to be an arbitrary set of the form given by Proposition~\ref{prop:local.poly.set}, as the following example illustrates.
\begin{example}\label{ex:bracket.overflow}
Suppose that $\phi(n)=\alpha\{\frac{1}{10}n\}$. Then Lemmas \ref{lem:c.psi(n)} and \ref{lem:psi1.psi2} imply that $\phi$ is locally linear on the set
\begin{displaymath}
B=\{n\in[N]:\{\textstyle\frac{1}{10}n\}\in(1/4,1/2]\}.
\end{displaymath}
However, if $n=6$ and $h_1=h_2=-1$ then $n+h_1$, $n+h_2$ and $n+h_1+h_2$ all lie in $B$, but $\Delta_{h_1,h_2}\phi(n)=-\alpha$. Hence $\phi$ is not \emph{strongly} locally linear on $B$.
\end{example}
Nonetheless, provided the intervals $J_\nu$ appearing in the definition of a set of the form given by Proposition~\ref{prop:local.poly.set} are sufficiently small and are sufficiently far from the boundary of $(-1/2,1/2]$, the problem exposed by Example~\ref{ex:bracket.overflow} will not occur. Before we express this precisely, let us establish some notation. For functions $\nu_1,\ldots,\nu_r:[N]\to\R$ and sets $S_1,\ldots,S_r\subset(-1/2,1/2]$ define the set
\begin{displaymath}
B^r_N(\nu_1,\ldots,\nu_r;S_1,\ldots,S_r):=\{n\in[N]:\{\nu_i(n)\}\in S_i\text{ for all }i\}.
\end{displaymath}
We typically abuse notation slightly and write $B_N$ instead of $B_N^r$. For $\varepsilon<1/2$ we write $I_\varepsilon$ for the interval $(-\varepsilon,\varepsilon)$.
\begin{lemma}\label{lem:strong.local.poly.set}Let $k\ge1$ be an integer and set $c_k:=2^{-k}(2k+1)^{-1}$. Suppose that $\phi:[N]\to\R$ is a bracket polynomial of degree at most $k$ with bracket components $\nu_1,\ldots,\nu_m$. Suppose further that
\begin{equation}\label{eq:strong.local.poly.set.3}
\delta\le c_k
\end{equation}
and that
\begin{equation}\label{eq:strong.local.poly.set.4}
\varepsilon\ge k\delta,
\end{equation}
and that $J_1,\ldots,J_m$ are intervals of width at most $\delta$ inside $I_{\frac{1}{2}-\varepsilon}$.

Then there is a set $B\subset[N]$ on which $\phi$ is locally polynomial of degree at most $k$ and such that if $n\in[N]$, $h\in[-N,N]^{k+1}$ and $n+\omega\cdot h\in B_N(\nu_1,\ldots,\nu_m;J_1,\ldots,J_m)$ for every $\omega\in\{0,1\}^{k+1}\backslash\{\mathbf0\}$ then $n+\omega\cdot h\in B$ for every $\omega\in\{0,1\}^{k+1}$. In particular, $\phi$ is strongly locally polynomial of degree at most $k$ on the set $B_N(\nu_1,\ldots,\nu_m;J_1,\ldots,J_m)$.
\end{lemma}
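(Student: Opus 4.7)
My plan is to take $B := B_N(\nu_1, \ldots, \nu_m; J_1', \ldots, J_m')$, where $J_i' := J_i + I_{k\delta}$ is the $k\delta$-enlargement of $J_i$. The width of $J_i'$ is at most $(2k+1)\delta \le 2^{-k}$ using $\delta \le c_k$, which is small enough for Proposition~\ref{prop:local.poly.set} to guarantee that $\phi$ is locally polynomial of degree at most $k$ on $B$. Meanwhile $J_i' \subset I_{1/2-\varepsilon+k\delta} \subset (-1/2,1/2]$ by the hypothesis $\varepsilon \ge k\delta$. The inclusion $J_i \subset J_i'$ gives $B_N(\ldots;J_1,\ldots,J_m) \subset B$, which handles every $\omega \ne \mathbf 0$ in the second conclusion automatically; the only non-trivial case is $\omega = \mathbf 0$, i.e.\ showing $n \in B$, equivalently that $\{\nu_i(n)\} \in J_i'$ for each $i$.

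To prove $\{\nu_i(n)\} \in J_i'$, I use that each bracket component $\nu_i$ is itself a bracket polynomial whose own bracket components form a subset of $C(\phi)$, so Proposition~\ref{prop:local.poly.set} applied to $\nu_i$ shows that $\nu_i$ is locally polynomial of degree at most its own degree $d_i \le k$ on a suitable subset of $[N]$ containing all points $n + \omega \cdot h$ with $\omega \ne \mathbf 0$. Hence $\Delta_{h_1, \ldots, h_{d_i+1}} \nu_i(n) = 0$, and solving the resulting alternating-sum identity for $\nu_i(n)$ and passing to fractional parts yields
\[
\{\nu_i(n)\} \equiv \sum_{\omega \in \{0,1\}^{d_i+1} \setminus \{\mathbf 0\}} (-1)^{|\omega|+1} \{\nu_i(n + \omega \cdot h')\} \pmod 1,
\]
with $h' = (h_1, \ldots, h_{d_i+1})$ and every summand on the right lying in $J_i$. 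A careful iterative application of Lemma~\ref{lem:frac.part.difference}, peeling off one coordinate at a time and exploiting the fact that neighbouring cube-vertices have $\{\nu_i\}$-values both in $J_i$ to bound the intermediate discrete derivatives $\{\Delta_{h_j} \nu_i\}$, then produces the desired $k\delta$ bound on the distance of $\{\nu_i(n)\}$ from $J_i$ while simultaneously preserving the no-wraparound hypothesis of Lemma~\ref{lem:frac.part.difference} at each step.

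The principal obstacle is keeping this quantitative estimate linear in $k$: a naive expansion of the $(2^{d_i+1}-1)$-term signed sum only gives a bound of order $2^k\delta$, so the telescoping must be structured carefully, respecting the bracket-component nesting of $\nu_i$ and the ordering of the cube's coordinates, and propagating control one level at a time so that at each level the enlargement grows by at most $\delta$. The "In particular" clause then follows immediately: for $n$ and $h$ as in the hypothesis, every $n + \omega \cdot h$ lies in $B$ by the main conclusion, and local polynomiality of $\phi$ on $B$ then forces $\Delta_{h_1,\ldots,h_{k+1}}\phi(n) = 0$, which is exactly the strong local polynomiality of $\phi$ on $B_N(\nu_1,\ldots,\nu_m;J_1,\ldots,J_m)$.
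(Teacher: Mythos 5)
Your choice of $B := B_N(\nu_1,\ldots,\nu_m;J_1',\ldots,J_m')$ with $J_i' := J_i + I_{k\delta}$ is exactly the set the paper's recursive construction produces when unwound, and the overall plan is the same. The gap is in the step where you invoke $\Delta_{h_1,\ldots,h_{d_i+1}}\nu_i(n) = 0$. Local polynomiality of $\nu_i$ yields that identity only if \emph{every} vertex of the cube, including the $\omega=\mathbf 0$ vertex $n$ itself, lies in a set on which $\nu_i$ is locally polynomial; but the sets supplied by Proposition~\ref{prop:local.poly.set} are cut out by conditions on $\{\mu(n)\}$ for the bracket components $\mu$ of $\nu_i$, and you have no a priori control of $\{\mu(n)\}$ --- that is the very type of statement you are in the middle of proving. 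Saying that the set contains ``all points $n+\omega\cdot h$ with $\omega\ne\mathbf 0$'' therefore does not license the derivative identity. Your phrase ``propagating control one level at a time'' names the fix (first establish $\{\mu(n)\}\in J_\mu'$ for the innermost $\mu$, then use that to validate the derivative identity for the next nesting level, and so on), but you never set this induction up, and setting it up is precisely the structural induction the paper performs on the construction of $\phi$: in Case~1 ($\phi=\{\theta\}$) the inductive hypothesis applied to the strictly simpler $\theta$ directly supplies $\Delta_h\theta(n)=0$, Lemma~\ref{lem:frac.part.difference} then gives $\{\theta(n)\}\in J_m'$, and $B$ is taken to be $B_0\cap B_N(\theta;J_m')$.

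There is also a quantitative gap that you flag but do not close. You observe that the naive $(2^{d_i+1}-1)$-term alternating sum gives an enlargement of order $2^k\delta$ and assert that a careful telescoping keeps it to $k\delta$, but you give no mechanism. The natural telescopings --- pairing the positive against the negative vertices, or peeling off one $h_j$ at a time so that $\theta(n)=\theta(n+h_1)-\Delta_{h_1}\theta(n+h_2)+\cdots+(-1)^{d_i}\Delta_{h_1,\ldots,h_{d_i}}\theta(n+h_{d_i+1})$ --- make the width of the interval confining the $j$-fold derivative double at each step, yielding $(2^{d_i}-1)\delta$, not $d_i\delta$. Unless you can exhibit a telescoping that genuinely achieves the linear growth, the honest fix is to accept the $(2^{d_i}-1)\delta$ bound and strengthen the hypotheses accordingly (take $c_k:=2^{-k}(2^{k+1}-1)^{-1}$ and $\varepsilon\ge(2^k-1)\delta$), which costs nothing downstream in the paper.
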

Before we prove Lemma \ref{lem:strong.local.poly.set}, let us note that in combination with Proposition \ref{prop:strong.local.poly} it immediately implies the following result.
\begin{prop}\label{prop:recur.implies.non-uniform}Let $k\ge1$ be an integer and let $c_k$ be as in Lemma \ref{lem:strong.local.poly.set}. Suppose that $\phi:[N]\to\R$ is a bracket polynomial of degree at most $k$ with bracket components $\nu_1,\ldots,\nu_m$. Suppose further that $\delta\le c_k$ and that $\varepsilon\ge k\delta$, and that $J_1,\ldots,J_m$ are intervals of width at most $\delta$ inside $I_{\frac{1}{2}-\varepsilon}$ such that
\[
|B_N(\nu_1,\ldots,\nu_m;J_1,\ldots,J_m)|\ge\sigma N.
\]
Then, defining $f:[N]\to\C$ by $f(n):=e(\phi(n))$, we have $\|f\|_{U^{k+1}[N]}\gg_{k,\sigma}1$.
\end{prop}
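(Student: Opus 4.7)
The plan is to combine Lemma \ref{lem:strong.local.poly.set} with Proposition \ref{prop:strong.local.poly} directly, since all the work has already been done in those two results.

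First, I would set $B := B_N(\nu_1, \ldots, \nu_m; J_1, \ldots, J_m)$. The hypotheses of the proposition are exactly the hypotheses of Lemma \ref{lem:strong.local.poly.set}: $\delta \le c_k$, $\varepsilon \ge k\delta$, and each $J_i$ is an interval of width at most $\delta$ inside $I_{\frac{1}{2}-\varepsilon}$. Applying that lemma yields the conclusion that $\phi$ is strongly locally polynomial of degree at most $k$ on $B$.

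Second, I would invoke Proposition \ref{prop:strong.local.poly}, with the role of the integer $k$ in that proposition played by $k+1$ here (so that ``degree $k-1$'' there becomes ``degree $k$'' here). The hypothesis of Proposition \ref{prop:strong.local.poly} requires a set of cardinality $\sigma N$ on which $\phi$ is strongly locally polynomial of degree $k$; the set $B$ supplies the first condition by assumption ($|B| \ge \sigma N$) and the second by the previous paragraph. The conclusion $\|f\|_{U^{k+1}[N]} \gg_{k,\sigma} 1$ follows immediately.

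There is no real obstacle here; the only things to verify are the bookkeeping of indices (degree $k$ of $\phi$ matching up with the $U^{k+1}$-norm via the $\omega \in \{0,1\}^{k+1}$ cube in the definition of strongly locally polynomial) and the observation that the set $B$ produced by Lemma \ref{lem:strong.local.poly.set} is the same set $B_N(\nu_1, \ldots, \nu_m; J_1, \ldots, J_m)$ whose cardinality is bounded below. Both are immediate from the statements as given, so the proof should be a one-line appeal to the two cited results.
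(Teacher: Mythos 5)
Your proposal is correct and is exactly the paper's intended argument: the paper states this proposition immediately after Lemma \ref{lem:strong.local.poly.set} with the remark that ``in combination with Proposition \ref{prop:strong.local.poly} it immediately implies the following result,'' and your write-up supplies precisely the index bookkeeping ($k+1$ in place of $k$ when invoking Proposition \ref{prop:strong.local.poly}) that makes this immediate.
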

\begin{proof}[Proof of Lemma \ref{lem:strong.local.poly.set}]The lemma is trivial for genuine polynomials with $B=[N]$, and so we may assume that we are in one of three cases:
\\
\\
\textit{Case 1.} $\phi=\{\theta\}$ with $\theta$ of degree at most $k$.
\\
\textit{Case 2.} $\phi=\theta_1+\theta_2$ with $\theta_i$ of degree at most $k_i$ and $k_1,k_2\le k$.
\\
\textit{Case 3.} $\phi=\theta_1\cdot\theta_2$ with $\theta_i$ of degree at most $k_i$ and $k_1+k_2\le k$.
\\
\\
We proceed by induction on the number of operations required to construct $\phi$. Therefore, in case 1 we may assume that there exists a set $B_0\subset[N]$ on which $\theta$ is locally polynomial; in cases 2 and 3 we may assume that there exist a set $B_1\subset[N]$ on which $\theta_1$ is locally polynomial and a set $B_2\subset[N]$ on which $\theta_2$ is locally polynomial; and in each case we may assume that $n+\omega\cdot h\in B_i$ for every $\omega\in\{0,1\}^{k+1}$ whenever $n+\omega\cdot h\in B_N(\nu_1,\ldots,\nu_m;J_1,\ldots,J_m)$ for every $\omega\ne\mathbf0$.

\begin{sloppypar}In case 1 we may assume that $\phi=\nu_m=\{\theta\}$. Suppose that $n+\omega\cdot h\in B_N(\nu_1,\ldots,\nu_m;J_1,\ldots,J_m)$ for every $\omega\ne\mathbf0$. Then $\Delta_h\theta(n)=0$ by the inductive hypotheses, and so repeated application of Lemma \ref{lem:frac.part.difference} implies that $\{\theta(n)\}\in J_m':=J_m+(-k\delta,k\delta)$. The inequality (\ref{eq:strong.local.poly.set.4}) then implies that $J_m'\subset(-1/2,1/2]$, whilst (\ref{eq:strong.local.poly.set.3}) and the definition of $c_k$ imply that $|J_m'|\le2^{-k}$. Lemma~\ref{lem:c.psi(n)} therefore implies that we may take $B=B_0\cap B_N(\theta;J_m')$.\end{sloppypar}

In cases 2 and 3 we may simply take $B=B_1\cap B_2$ by Lemma~\ref{lem:psi1.psi2}. 
\end{proof}
In the event that the bracket components $\nu_i$ appearing in Proposition \ref{prop:recur.implies.non-uniform} are linear it is elementary to show that there exist intervals $J_i$ satisfying the hypotheses of that proposition, with $\sigma$ depending only on $k$ and $m$. Indeed, one can even insist that the intervals $J_i$ be centred at zero, as follows.
\begin{lemma}[Sets of linear bracket polynomials are strongly recurrent]\label{lem:linear.Bohr.set.size.bound.lower}
Let $\alpha_1,\ldots,\alpha_r\in\R$, let $\delta>0$ and let $\nu_i(n):=\{\alpha_in\}$. Then
\[
|B_N(\nu_1,\ldots,\nu_r;I_\delta,\ldots,I_\delta)|\gg_{r,\delta}N.
\]
\end{lemma}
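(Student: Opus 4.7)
The plan is to use a standard Dirichlet-style pigeonhole argument in the $r$-dimensional torus $(\R/\Z)^r$. The idea is that since there are only finitely many disjoint boxes of side length $\delta$ tiling the torus, many of the $N+1$ points $P_n := (\alpha_1 n, \ldots, \alpha_r n) \pmod 1$, for $n \in \{0, 1, \ldots, N\}$, must land in a common box, and the differences of those $n$ will then witness simultaneous smallness modulo $1$ of all the $\alpha_i$.

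First I would fix $K = \lceil 1/\delta \rceil$ and partition $(\R/\Z)^r$ into $K^r$ pairwise disjoint axis-parallel boxes, each having side length at most $1/K \le \delta$. Applying the pigeonhole principle to the $N+1$ points $P_0, P_1, \ldots, P_N$, I obtain some box $Q$ containing at least $m \ge (N+1)/K^r$ of them, say $P_{n_0}, P_{n_1}, \ldots, P_{n_{m-1}}$ with $0 \le n_0 < n_1 < \cdots < n_{m-1} \le N$. For each $j \ge 1$, the point $P_{n_j} - P_{n_0}$ has each of its $r$ coordinates lying in an interval of width at most $1/K$ around zero in $\R/\Z$, which means $\{\alpha_l(n_j - n_0)\} \in (-1/K, 1/K) \subseteq (-\delta, \delta) = I_\delta$ for every $l = 1, \ldots, r$.

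Setting $d_j := n_j - n_0 \in \{0, 1, \ldots, N\}$ for $j = 0, 1, \ldots, m-1$, these are $m$ distinct integers in $\{0, 1, \ldots, N\}$, all but one of which ($d_0 = 0$) lie in $[N]$ and satisfy the defining condition of $B_N(\nu_1, \ldots, \nu_r; I_\delta, \ldots, I_\delta)$. Hence
\[
|B_N(\nu_1, \ldots, \nu_r; I_\delta, \ldots, I_\delta)| \ge m - 1 \ge \frac{N+1}{K^r} - 1 \gg_{r, \delta} N,
\]
which is the claimed bound.

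There is no substantive obstacle here; the only minor care needed is to match the open interval $I_\delta = (-\delta, \delta)$ with the strict inequality $|\{\alpha_l(n_j - n_0)\}| < 1/K \le \delta$ coming from the pigeonhole step, and to absorb the loss of one element (from discarding $d_0 = 0$) into the implicit constant, which is harmless once $N$ is taken to be at least some constant depending on $r$ and $\delta$. The argument uses nothing about the $\alpha_i$ beyond the fact that the $P_n$ are points of the torus, which is why the constant depends only on $r$ and $\delta$, not on the $\alpha_i$ themselves.
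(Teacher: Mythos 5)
Your proof is correct and takes essentially the same approach as the paper's: both use a Dirichlet-style pigeonhole argument to find many $n \in \{0,\ldots,N\}$ for which $(\alpha_1 n,\ldots,\alpha_r n)$ lies in a common small box of the torus, then subtract a fixed such $n$ (you use the minimum $n_0$, the paper uses the maximum) and invoke the triangle inequality in $\R/\Z$ to land in $B_N(\nu_1,\ldots,\nu_r;I_\delta,\ldots,I_\delta)$. The paper simply outsources the pigeonholing step to \cite[Lemma 4.20]{tao-vu} rather than spelling out the box-counting as you do.
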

\begin{proof}
A pigeonholing argument similar to that used in \cite[Lemma 4.20]{tao-vu} gives points $\xi_1,\ldots,\xi_r\in\R$ and a subset $A\subset[N]$ satisfying $|A|\gg_{r,\delta}N$ such that whenever $n\in A$ we have $\|\alpha_jn-\xi_j\|_{\R/\Z}<\delta/2$ for every $j=1,\ldots,r$.

Following \cite[Lemma 4.20]{tao-vu}, note that if $\|\alpha_jn-\xi_j\|_{\R/\Z}<\delta/2$ and $\|\alpha_jn'-\xi_j\|_{\R/\Z}<\delta/2$ for every $j=1,\ldots,r$ then by the triangle inequality we have $\|\alpha_j(n-n')\|_{\R/\Z}<\delta$ for every $j=1,\ldots,r$. Therefore, writing $m$ for the maximum element of $A$, the set $(m-A)\backslash\{0\}$ is contained in $B_N(\nu_1,\ldots,\nu_r;I_\delta,\ldots,I_\delta)$, and so
\[
|B_N(\nu_1,\ldots,\nu_r;I_\delta,\ldots,I_\delta)|\ge|m-A|-1\gg_{r,\delta}N.
\]
\end{proof}
Combined with Proposition \ref{prop:recur.implies.non-uniform}, this immediately implies Theorem \ref{thm:bracket.non.uniform} in the case that every bracket component of $\Phi$ is linear. To conclude Theorem \ref{thm:bracket.non.uniform} in general, we require the following generalisation of Lemma \ref{lem:linear.Bohr.set.size.bound.lower}.
\begin{theorem}[Recurrence of bracket polynomials; precise statement]\label{lem:general.Bohr.set.size.bound.lower}
Let $\Theta_1,\ldots,\Theta_r$ be constant-free bracket forms and suppose that $\theta_1,\ldots,\theta_r$ are realisations of $\Theta_1,\ldots,\Theta_r$, respectively. Let $\delta>0$. Then, provided $N$ is sufficiently large in terms of $\Theta_1,\ldots,\Theta_r$ and $\delta$, we have
\[
|B_N(\theta_1,\ldots,\theta_r;I_\delta,\ldots,I_\delta)|\gg_{\Theta_1,\ldots,\Theta_r,\delta}N.
\]
\end{theorem}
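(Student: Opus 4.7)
The plan is to invoke the two results referenced in the introduction: the Bergelson--Leibman representation of bracket polynomials via polynomial sequences on nilmanifolds (Theorem \ref{thm:berg.leib}), and the quantitative factorisation theorem of Green and Tao for such sequences.

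First, by Bergelson--Leibman, and after passing to a product nilmanifold so that a single set of data serves all $i$ simultaneously, we obtain a connected, simply connected nilpotent Lie group $G$, a discrete cocompact subgroup $\Gamma \le G$, a polynomial sequence $g:\Z\to G$ whose ``shape'' depends only on the bracket forms $\Theta_i$, and functions $F_i:G/\Gamma \to \R/\Z$ satisfying $\theta_i(n) \equiv F_i(g(n)\Gamma) \pmod 1$ for every $n$. The $F_i$ arise from fractional-part coordinates with respect to a fixed fundamental domain, and while they are discontinuous on a measure-zero set, they are continuous on an open neighbourhood $U$ of the identity coset $\Gamma$. The constant-free hypothesis on each $\Theta_i$ forces $\theta_i(0)=0$ and allows us to arrange that $g(0)=e_G$, so that $F_i(\Gamma)=0$ for every $i$. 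Continuity of the $F_i$ at $\Gamma$ then supplies an open neighbourhood $V\subset U$ of $\Gamma$ on which $|F_i|<\delta$ modulo $1$ for every $i$, so that $g(n)\Gamma\in V$ implies $\{\theta_i(n)\}\in I_\delta$ for every $i$. It therefore suffices to show that $g(n)\Gamma\in V$ for $\Omega_{\Theta_1,\ldots,\Theta_r,\delta}(N)$ values of $n\in[N]$.

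To achieve this, we apply Green and Tao's quantitative factorisation theorem, which decomposes $g(n)=\epsilon(n)g'(n)\gamma(n)$, where $\epsilon$ is smoothly slowly varying, $\gamma$ is periodic of some bounded period $q$ modulo $\Gamma$, and $g'(n)\Gamma$ equidistributes on some closed sub-nilmanifold $H/(H\cap\Gamma)$ of $G/\Gamma$ up to a small error. Taking the factorisation precision finer than the diameter of $V$ and restricting to the residue class $n\equiv 0\pmod q$, both $\gamma(n)=\gamma(0)$ and $\epsilon(n)\approx\epsilon(0)$ are essentially constant, and the condition $g(n)\Gamma\in V$ reduces to $g'(n)$ lying in a fixed open subset $W$ of $H/(H\cap\Gamma)$. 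Since $g(0)=e_G$ forces $g'(0)\in W$, the set $W$ is a non-empty open subset of $H/(H\cap\Gamma)$, and so the quantitative equidistribution of $g'$ guarantees that a positive proportion of $n\in[N]$ with $n\equiv 0\pmod q$ satisfies $g(n)\Gamma\in V$, provided $N$ is large enough in terms of the $\Theta_i$ and $\delta$.

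The principal obstacle is the bookkeeping required to make all the dependencies uniform in the realisations: the sub-nilmanifold $H$, the precision parameter, and the period $q$ emerging from the Green--Tao decomposition must ultimately be bounded purely in terms of the shape of $g$, which in turn depends only on the bracket forms $\Theta_i$. A secondary technical point is ensuring that the Bergelson--Leibman representation produced for the combined list $(\theta_1,\ldots,\theta_r)$ simultaneously places every $F_i$ in a form that is continuous and vanishes at $\Gamma$; this requires choosing a fundamental domain that avoids the relevant discontinuities near the identity and aligning the recursive decomposition of each bracket polynomial with the constant-free hypothesis, so that $g(0)=e_G$ can be enforced without spoiling the representation.
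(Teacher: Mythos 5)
Your proposal follows essentially the same route as the paper's proof: apply Bergelson--Leibman to express the $\theta_i$ through a constant-free polynomial sequence $g$ into a nilmanifold (with $g(0)=1$), apply the Green--Tao factorisation $g=\varepsilon g'\gamma$, restrict to a progression modulo the period of $\gamma$ and to an initial segment where $\varepsilon$ is nearly constant, and then use the quantitative equidistribution of $g'$ to place $g(n)\Gamma$ near the identity coset for a positive proportion of $n$. The paper replaces your appeal to ``continuity of the $F_i$ near $\Gamma$ and the openness of $W$'' with explicit quantitative control -- the coordinate comparison $|\chi_\mathcal{Y}(x)|\ll d(x\Gamma,\Gamma)$ of Lemma~\ref{lem:std.basis} together with the lower bound $\mu(B_\rho)\ge\rho^m/M^{O(m)}$ of Lemma~\ref{lem:metric.ball} -- which is exactly the ``bookkeeping'' you identify as the principal obstacle; it is needed to make the implied constant depend only on the bracket forms and $\delta$ rather than on the particular realisation, but the mechanism you sketch is the correct one.
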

\begin{remark}\label{rem:const.free}
The restriction here to const\-ant-free bracket forms is necessary. For example, in the case $r=1$, if $\Theta_1$ were the bracket form $\{1/2+\alpha n\}$ then for $c\ll N^{-1}$ the realisation $\{1/2+cn\}$ of $\Theta_1$ would not satisfy the proposition.
\end{remark}
We prove Theorem \ref{lem:general.Bohr.set.size.bound.lower} in Section \ref{sec:literature}.

Compared to Lemma \ref{lem:linear.Bohr.set.size.bound.lower}, the proof of which was very straightforward, Theorem \ref{lem:general.Bohr.set.size.bound.lower} appears to be rather deep, in that our proof makes use of two major results from the literature. The first is work of Bergelson and Leibman \cite{berg.leib} that allows us to express a bracket polynomial in terms of a so-called \emph{polynomial sequence} on a nilmanifold. The second is a difficult theorem of Green and Tao \cite{poly.seq} describing the distribution of such polynomial sequences.

Of course, it may well be that there is an elementary proof of Theorem \ref{lem:general.Bohr.set.size.bound.lower}, or at least of some variant of it that is still strong enough to imply Theorem \ref{thm:bracket.non.uniform}. In Sections \ref{sec:weak.recur} and \ref{sec:approx.loc.poly} we give elementary arguments establishing weak versions of Theorem \ref{lem:general.Bohr.set.size.bound.lower} that are sufficient to prove Theorem \ref{thm:bracket.non.uniform} in certain simple cases.
\section{Bases and coordinates on nilmanifolds}
Our aim now is to prove Theorem \ref{lem:general.Bohr.set.size.bound.lower}. As we remarked at the end of the last section, the proof makes use of results of Bergelson and Leibman \cite{berg.leib} that allow us to express a bracket polynomial in terms of a so-called \emph{polynomial sequence} on a nilmanifold, and results of Green and Tao \cite{poly.seq} that describe the behaviour of such a sequence.

Even just to state these results requires a fair amount of background and notation concerning nilmanifolds, which we introduce in this section. This allows us to state the results of Bergelson--Leibman and Green--Tao in the next section, where we also prove Theorem \ref{lem:general.Bohr.set.size.bound.lower}.

At this point let us recall our convention, which applies throughout this paper, that when we write that $G/\Gamma$ is a nilmanifold we assume that $G$ is a \emph{connected} and \emph{simply connected} nilpotent Lie group. This is consistent with a standing assumption in \cite{poly.seq}, for example.

We start this section by introducing Mal'cev bases and coordinates on nilmanifolds. These are standard concepts in the study of nilpotent Lie groups and nilmanifolds, and are well documented in the literature; the reader may consult \cite{cor-gre}, for example, for more detailed background.

\begin{definition}[Mal'cev basis of a nilpotent Lie algebra {\cite[\S1.1.13]{cor-gre}}]\label{def:malc.bas.alg}
Let $\g$ be an $m$-dimensional nilpotent Lie algebra, and let $\mathcal{X}=\{X_1,\ldots,X_m\}$ be a basis for $\g$ over $\R$. Then $\mathcal{X}$ is said to be a \emph{Mal'cev basis} for $\g$ if for each $j=0,\ldots,m$ the subspace $\h_j$ spanned by the vectors $X_{j+1},\ldots,X_m$ is a Lie algebra ideal in $\g$.

In the event that $\g$ is the Lie algebra of a connected, simply connected nilpotent Lie group $G$, we sometimes say that $\mathcal{X}$ is a Mal'cev basis \emph{for $G$}.
\end{definition}
\begin{remark}\label{rem:mal.exist}It follows from \cite[Theorem 1.1.13]{cor-gre} that every nilpotent Lie algebra admits a Mal'cev basis. We will not need this general fact in this paper, however, since we deal only with explicit bases that can easily be verified to be Mal'cev bases.
\end{remark}
We can use a Mal'cev basis for a connected, simply connected nilpotent Lie group $G$ to place a coordinate system on $G$, using the following result.
\begin{prop}
Let $G$ be a connected, simply connected, $m$-dimensional nilpotent Lie group with Lie algebra $\g$. Then for every $g\in G$ there is a unique $m$-tuple $(t_1,\ldots,t_m)\in\R^m$ such that
\begin{equation}\label{eq:mal.coord}
g=\exp(t_1X_1)\cdots\exp(t_mX_m).
\end{equation}
\end{prop}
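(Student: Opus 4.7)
The plan is to prove existence and uniqueness simultaneously by induction on the dimension $m$, exploiting the filtration $\h_0 \supset \h_1 \supset \ldots \supset \h_m = \{0\}$ built into the Mal'cev basis. The key background fact I will use without proof is that, because $G$ is connected and simply connected nilpotent, the exponential map $\exp: \g \to G$ is a diffeomorphism; consequently, for any Lie subalgebra $\h \subset \g$ the image $\exp(\h)$ is a closed connected simply connected Lie subgroup of $G$ with Lie algebra $\h$, and if $\h$ is an ideal then $\exp(\h)$ is normal in $G$.

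The base case $m=1$ is immediate: $G$ is a one-dimensional connected simply connected abelian Lie group, hence isomorphic to $\R$ via $t \mapsto \exp(tX_1)$, which gives both existence and uniqueness of $t_1$. For the inductive step, set $H_1 := \exp(\h_1)$. By the ideal property of $\h_1$ and the facts recalled above, $H_1$ is a closed normal connected simply connected nilpotent Lie subgroup of dimension $m-1$ with Lie algebra $\h_1$, and $G/H_1$ is a connected simply connected abelian Lie group of dimension one. Moreover, since each $\h_j$ with $j\ge 1$ is an ideal of $\g$ and hence of $\h_1$, the tuple $X_2,\ldots,X_m$ is a Mal'cev basis for $\h_1$, so the inductive hypothesis applies to $H_1$.

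For existence, project $g$ to $G/H_1 \cong \R$; there is a unique $t_1 \in \R$ such that $\exp(t_1 X_1)$ and $g$ have the same image in $G/H_1$, i.e.\ $\exp(-t_1 X_1)g \in H_1$. Applying the inductive hypothesis to this element of $H_1$ produces unique $t_2,\ldots,t_m$ with $\exp(-t_1 X_1)g = \exp(t_2 X_2)\cdots\exp(t_m X_m)$, which rearranges to \eqref{eq:mal.coord}. For uniqueness, suppose $\exp(t_1 X_1)\cdots\exp(t_m X_m) = \exp(s_1 X_1)\cdots\exp(s_m X_m)$. Since $\exp(t_i X_i) \in H_1$ for $i\ge 2$, projecting to $G/H_1$ forces $t_1 = s_1$, and cancelling $\exp(t_1 X_1)$ on the left reduces the problem to uniqueness of Mal'cev coordinates inside $H_1$, which holds by induction.

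The one point that really needs care is the preliminary structural claim that $H_1 = \exp(\h_1)$ is a closed simply connected subgroup with Lie algebra $\h_1$ and that $G/H_1$ is a one-dimensional connected simply connected abelian Lie group; this is the only place where non-trivial Lie-theoretic input is needed, and everything else in the argument is bookkeeping via the filtration. Since we are free to quote standard results on simply connected nilpotent Lie groups (as the paper does elsewhere, cf.\ Remark~\ref{rem:mal.exist} and \cite{cor-gre}), this step should be entirely routine.
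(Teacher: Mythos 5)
Your proof is correct. The paper itself simply cites the result: it quotes \cite[Theorem~1.2.1~(a)]{cor-gre} for the fact that $\exp\g=G$ and \cite[Proposition~1.2.7~(c)]{cor-gre} for the existence and uniqueness of the second-kind coordinates, giving no argument of its own. You instead give a self-contained inductive proof from the single primitive fact that $\exp$ is a diffeomorphism on connected simply connected nilpotent Lie groups, which is essentially the content of the cited Corwin--Greenleaf result. Your argument is sound: the key observations all check out, in particular that $X_2,\ldots,X_m$ is a Mal'cev basis for $\h_1$ (each $\h_j$ with $j\ge1$ is an ideal in $\g$, hence in $\h_1$), that $G/H_1$ is simply connected (from the homotopy exact sequence of the fibration $H_1\to G\to G/H_1$ using $\pi_1(G)=0$ and $\pi_0(H_1)=0$), and that the exponential of the abelianisation $G/H_1\cong\R$ commutes with the quotient map so that $t\mapsto\exp(tX_1)H_1$ is a bijection $\R\to G/H_1$. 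What your route buys is transparency --- it makes visible how the filtration $\h_0\supset\h_1\supset\cdots\supset\h_m=\{0\}$ does the work --- at the cost of being longer; the paper's route is shorter but opaque, deferring entirely to the reference.
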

\begin{proof}It follows from \cite[Theorem 1.2.1 (a)]{cor-gre} and \cite[Proposition 1.2.7 (c)]{cor-gre} that every element in $\exp\g$ can be expressed uniquely in the form (\ref{eq:mal.coord}), and from \cite[Theorem 1.2.1 (a)]{cor-gre} that $\exp\g=G$.
\end{proof}
This allows us to make the following definition.
\begin{definition}[Mal'cev coordinates]
Let $G$ be a connected, simply connected, $m$-dimensional nilpotent Lie group with Lie algebra $\g$, and let $g\in G$. Then we call the $t_i$ appearing in the expression (\ref{eq:mal.coord}) the \emph{Mal'cev coordinates} of $g$. We define the \emph{Mal'cev coordinate map} $\psi=\psi_{\mathcal{X}}:G\to\R^m$ by
\[
\psi(g):=(t_1,\ldots,t_r).
\]
\end{definition}
\begin{definition}[Mal'cev basis for a nilmanifold]Let $G/\Gamma$ be an $m$-dimensional nilmanifold. Then a Mal'cev basis $\mathcal{X}$ for $G$ is said to be \emph{compatible with $\Gamma$} if $\Gamma$ consists precisely of those elements whose Mal'cev coordinates are all integers. We also indicate this by saying simply that $\mathcal{X}$ is a Mal'cev basis \emph{for $G/\Gamma$}.

In the event that $\mathcal{X}$ is a Mal'cev basis for $G/\Gamma$, for each $g\in G$ there is a unique $z\in\Gamma$ for which the coordinates of $gz$ all lie in $(-1/2,1/2]$; see, for example, \cite[Lemma A.14]{poly.seq}. In this case, we call the coordinates of $gz$ the \emph{nilmanifold coordinates} of $g$, and define the \emph{nilmanifold coordinate map} $\chi=\chi_\mathcal{X}:G\to(-1/2,1/2]^m$ by
\[
\chi(g):=\psi(gz).
\] 
\end{definition}
These definitions are somewhat technical, so at this point the reader may find it instructive to consider the following example.
\begin{example}\label{eg:heis.bases}
Let $G/\Gamma$ be the Heisenberg nilmanifold. Let
\[
X_1=\log\left(\begin{smallmatrix}1&0&0\\0&1&1\\0&0&1\end{smallmatrix}\right)\qquad\qquad
X_2=\log\left(\begin{smallmatrix}1&1&0\\0&1&0\\0&0&1\end{smallmatrix}\right)\qquad\qquad
X_3=\log\left(\begin{smallmatrix}1&0&1\\0&1&0\\0&0&1\end{smallmatrix}\right)
\]
and let $\mathcal X=\{X_1,X_2,X_3\}$. It is straightforward to check that
\[
\left(\begin{smallmatrix}1&x&z\\0&1&y\\0&0&1\end{smallmatrix}\right)=\exp(yX_1)\exp(xX_2)\exp(zX_3),
\]
and so $\mathcal X$ is a Mal'cev basis for $G/\Gamma$ and its Mal'cev coordinate map $\psi_\mathcal{X}$ satisfies
\[
\psi_\mathcal{X}\left(\left(\begin{smallmatrix}1&x&z\\0&1&y\\0&0&1\end{smallmatrix}\right)\right)=(y,x,z).
\]
On the other hand, if we change the order of $X_1,X_2,X_3$, setting
\[
Y_1=X_2\qquad\qquad Y_2=X_1\qquad\qquad Y_3=X_3
\]
and setting $\mathcal Y=\{Y_1,Y_2,Y_3\}$, then we have
\[
\left(\begin{smallmatrix}1&x&z\\0&1&y\\0&0&1\end{smallmatrix}\right)=\exp(xY_1)\exp(yY_2)\exp((z-xy)Y_3).
\]
Thus $\mathcal Y$ is also a Mal'cev basis for $G/\Gamma$, but the Mal'cev coordinate map $\psi_\mathcal{Y}$ satisfies
\[
\psi_\mathcal{Y}\left(\left(\begin{smallmatrix}1&x&z\\0&1&y\\0&0&1\end{smallmatrix}\right)\right)=(x,y,z-xy).
\]
In particular, note that changing the order of the basis elements does not simply change the order of the Mal'cev coordinates.
\end{example}

For the purposes of this paper we will need to consider slightly more specific Mal'cev bases than those we have defined so far.
\begin{definition}[Filtration of a nilpotent group]
Let $G$ be a nilpotent group. A \emph{filtration} $G_\bullet$ of $G$ is a sequence of closed connected subgroups
\[
G=G_0=G_1\supset G_2\supset\cdots\supset G_d\supset G_{d+1}=\{1\}
\]
with the property that $[G_i,G_j]\subset G_{i+j}$ for all integers $i,j\ge0$. We define the \emph{degree} of $G_\bullet$ to be the minimal integer $d$ such that $G_{d+1}=\{1\}$.
\end{definition}
For example, the lower central series is a filtration with degree equal to the nilpotency class of the group. \begin{definition}[Mal'cev basis adapted to a filtration {\cite[Definition 2.1]{poly.seq}}]
Let $G/\Gamma$ be an $m$-dimensional nilmanifold and let $G_\bullet$ be a filtration for $G$. A Mal'cev basis $\mathcal{X}=\{X_1,\ldots,X_m\}$ for $G/\Gamma$ is said to be \emph{adapted to $G_\bullet$} if for each $i=1,\ldots,d$ we have $G_i=\exp\h_{m-\dim G_i}$. Here $\h_j=\Span\{X_{j+1},\ldots,X_m\}$ as in Definition \ref{def:malc.bas.alg}.
\end{definition}
\begin{remarks}
According to a result of Mal'cev \cite{malcev}, every nilmanifold admits a Mal'cev basis adapted to the lower central series. It is easy to check that the Mal'cev bases for the Heisenberg group that we considered in Example \ref{eg:heis.bases} are both Mal'cev bases adapted to the lower central series.
\end{remarks}
We close this section by introducing some higher-step variants of the Heisenberg nilmanifold, and some Mal'cev bases for them. We denote by $T_p$ the group of $(p+1)\times(p+1)$ real upper-triangular matrices with every diagonal element equal to 1; thus, for example, $T_2$ is the Heisenberg group. Define $Z_p$ to be the subgroup of $T_p$ consisting of those matrices having only integer entries. The quotient $T_p/Z_p$ is then a $p$-step nilmanifold. We define a Mal'cev basis for $T_p/Z_p$ as follows.
\begin{definition}[Standard basis for an upper-triangular nilmanifold]
Let $p\in\N$, and let $U$ be the set of all elements of $T_p$ that have one non-diagonal entry equal to 1, and every other non-diagonal entry equal to zero. Now for $i=1,\ldots,p$ define $U_i$ to be the set of elements of $U$ in which the unique non-diagonal non-zero entry is at a distance $i$ from the main diagonal; more precisely, if the unique non-diagonal non-zero entry of $A\in U$ is the $(j,k)$ entry then $A$ belongs to $U_{k-j}$. Note, therefore, that $U$ is the union of the $U_i$, and that for each $l=1,\ldots,p+1$ and each $i=1,\ldots,p$ the set $U_i$ contains at most one element with a non-diagonal non-zero entry in row $l$.

Then we define the \emph{standard basis} $\mathcal{X}_p$ for $T_p/Z_p$ to consist of the elements of $U$, ordered such that if $i<j$ then every element of $U_i$ appears before every element of $U_j$, and such that if $j<k$ then an element of $U_i$ whose non-diagonal non-zero entry lies in row $j$ appears before any element of $U_i$ whose non-diagonal non-zero entry lies in row $k$.

More generally, let $r\in\N$, and for each $i=1,\ldots,p$ and each $j=1,\ldots,r$ define the subset $U_{i,j}$ of the direct product $T_p^r$ to be the set
\[
\{(A_1,\ldots,A_r)\in T_p^r:A_j\in U_i; A_k=1\text{ for all }k\ne j\}.
\]
Then we define the \emph{standard basis} $\mathcal{X}_{p,r}$ for $T_p^r/Z_p^r$ to consist of those elements belonging to the union of the $U_{i,j}$, ordered such that if $i<i'$ and $j,j'$ are arbitrary then every element of $U_{i,j}$ appears before every element of $U_{i',j'}$; such that if $j<j'$ and $i$ is arbitrary then every element of $U_{i,j}$ appears before every element of $U_{i,j'}$; and such that if $A,B\in U_i$ and $A$ appears before $B$ in the basis $\mathcal{X}_p$ then the element $(1,\ldots,1,A,1,\ldots,1)$ of $U_{i,j}$ appears before the element $(1,\ldots,1,B,1,\ldots,1)$ of $U_{i,j}$ in $\mathcal{X}_{p,r}$.
\end{definition}
Thus, for example, the standard basis for $T_2^2$ consists of the elements
\[
\left(\left(\begin{smallmatrix}1&1&0\\0&1&0\\0&0&1\end{smallmatrix}\right),\left(\begin{smallmatrix}1&0&0\\0&1&0\\0&0&1\end{smallmatrix}\right)\right),
\]
\[
\left(\left(\begin{smallmatrix}1&0&0\\0&1&1\\0&0&1\end{smallmatrix}\right),\left(\begin{smallmatrix}1&0&0\\0&1&0\\0&0&1\end{smallmatrix}\right)\right),
\]
\[
\left(\left(\begin{smallmatrix}1&0&0\\0&1&0\\0&0&1\end{smallmatrix}\right),\left(\begin{smallmatrix}1&1&0\\0&1&0\\0&0&1\end{smallmatrix}\right)\right),
\]
\[
\left(\left(\begin{smallmatrix}1&0&0\\0&1&0\\0&0&1\end{smallmatrix}\right),\left(\begin{smallmatrix}1&0&0\\0&1&1\\0&0&1\end{smallmatrix}\right)\right),
\]
\[
\left(\left(\begin{smallmatrix}1&0&1\\0&1&0\\0&0&1\end{smallmatrix}\right),\left(\begin{smallmatrix}1&0&0\\0&1&0\\0&0&1\end{smallmatrix}\right)\right),
\]
\[
\left(\left(\begin{smallmatrix}1&0&0\\0&1&0\\0&0&1\end{smallmatrix}\right),\left(\begin{smallmatrix}1&0&1\\0&1&0\\0&0&1\end{smallmatrix}\right)\right),
\]
in that order.
\begin{remark}\label{rem:std.bas.is.mal}It is straightforward to check that the standard basis for $T_p/Z_p$ is indeed a Mal'cev basis, adapted to the lower central series \cite[\S5]{berg.leib}.
\end{remark}

The definition of a Mal'cev basis $\mathcal{X}$ of a nilpotent Lie algebra $\g$ requires that the vector subspaces $\h_j$ are Lie algebra ideals, which is to say that
\[
[\g,\h_j]\subset\h_j\qquad\qquad\qquad(j=0,\ldots,m-1).
\]
If $\mathcal{X}$ is a Mal'cev basis for a nilmanifold adapted to a filtration, however, then it obeys the stronger property that
\begin{equation}\label{eq:nesting}
[\g,\h_j]\subset\h_{j+1}\qquad\qquad\qquad(j=0,\ldots,m-1);
\end{equation}
here, we adopt the convention that $\h_m=\{0\}$. In \cite{poly.seq}, (\ref{eq:nesting}) is called the \emph{nesting property}. The nesting property turns out to be an important technical condition for various results from \cite[Appendix A]{poly.seq} that we use repeatedly in this paper. However, when we apply these results it is not always the case that we are applying them to a Mal'cev basis adapted to a filtration. It is therefore useful to introduce the following definition.
\begin{definition}[Nested Mal'cev basis]
Let $\g$ be an $m$-dimensional nilpotent Lie algebra. A Mal'cev basis for $\g$ that satisfies (\ref{eq:nesting}) is called a \emph{nested} Mal'cev basis.
\end{definition}

We close this section by defining what it means for a basis for $\g$ to be \emph{rational} in a quantitative sense, which is an essential concept for understanding the results of Green and Tao that we present in the next section.
\begin{definition}[Quantitative rationality]
The \emph{height} of a rational number $x$ is defined to be $\max\{|a|,|b|\}$ if $x=a/b$ in reduced form. If $Y,X_1,\ldots,X_r\in\R^m$ then we say that $Y$ is a \emph{$Q$-rational combination} of the $X_i$ if there are rationals $q_i$ of height at most $Q$ such that $Y=q_1X_1+\ldots+q_rX_r$.
\end{definition}
\begin{definition}[Rationality of a basis]
Let $\g$ be a nilpotent Lie algebra with basis $\mathcal{X}=\{X_1,\ldots,X_m\}$. We say that $\mathcal{X}$ is \emph{$Q$-rational} if the structure constants $c_{ijk}$ appearing in the relations
\[
[X_i,X_j]=\sum_kc_{ijk}X_k
\]
are all rational of height at most $Q$.
\end{definition}
\section{Polynomial sequences on nilmanifolds}\label{sec:literature}
In this section we introduce results of Bergelson and Leibman \cite{berg.leib} and Green and Tao \cite{poly.seq} that allow us to prove Theorem \ref{lem:general.Bohr.set.size.bound.lower}. We start by describing the work of Bergelson and Leibman.

Recall from the introduction that the bracket polynomial $\{\alpha n[\beta n]\}$ arises naturally from the sequence
\begin{equation}\label{eq:lin.poly.form}
g(n)=\left(\begin{array}{ccc}
               1 & -\alpha n & 0 \\
               0 & 1        & \beta n \\
               0 & 0        & 1
               \end{array}\right)
\end{equation}
in the Heisenberg nilmanifold. Remarkably, Bergelson and Leibman show that \emph{every} bracket polynomial arises in a similar way.
\begin{definition}[Polynomial mappings and polynomial forms]
A map $\rho:\Z\to T_p^r$ is said to be a \emph{polynomial mapping} of degree at most $k$ if there are polynomials $\phi_{i,j,l}$ of degree at most $k$ such that for each $n\in\Z$ the $(i,j)$-entry of the matrix $\rho_l(n)$ is given by $\phi_{i,j,l}(n)$. If the $\phi_{i,j,l}$ all have zero constant term then $\rho$ is said to be a \emph{constant-free} polynomial mapping.

Now let $\Phi_{i,j,l}$ be polynomial forms of degree at most $k$ in the sense of Definition \ref{def:poly.form}. Then the $r$-tuple $P=(P_1,\ldots,P_r)$ of $(p+1)\times(p+1)$ matrices whose diagonal entries are all 1, and such that every above-diagonal $(i,j)$-entry of $P_l$ is equal to the polynomial form $\Phi_{i,j,l}$, is said to be a \emph{polynomial form} of degree at most $k$ on $T_p^r$. If every $\Phi_{i,j,l}$ is constant free then $P$ is also said to be \emph{constant free}.

Finally, for each $i,j,l$ let $\phi_{i,j,l}$ be a realisation of $\Phi_{i,j,l}$. Let $\rho:\Z\to T_p^r$ be the polynomial mapping defined by setting the $(i,j)$-entry of $\rho_l(n)$ equal to $\phi_{i,j,l}(n)$. Then $\rho$ is said to be a \emph{realisation} of the polynomial form $P$.
\end{definition}
\begin{remarks}If the $\alpha$ and $\beta$ appearing in (\ref{eq:lin.poly.form}) are taken to be elements of some alphabet (as opposed to real numbers) then the matrix $g(n)$ can be viewed as a polynomial form of degree at most $1$ on $T_2$. It follows from \cite[\S5.8]{berg.leib} that for every $p,r$ the nilmanifold coordinates $\chi(g)$ of an element $g\in T_p^r$ with respect to the standard basis are given by bracket expressions in the entries of the matrices appearing in $g$. Thus, in particular, if $P$ is a polynomial form on $T_p^r$ then each coordinate $\chi(P)_i$ naturally defines a bracket form.
\end{remarks}
\begin{theorem}[Bergelson--Leibman \cite{berg.leib}]\label{thm:berg.leib}
Let $\Theta_1,\ldots,\Theta_r$ be constant-free bracket forms. Then there exist $p\ge1$, a constant-free polynomial form $P$ on $T_p^r$, and a nested Mal'cev basis $\mathcal{Y}=\{Y_1,\ldots,Y_m\}$ for $T_p^r/Z_p^r$ such that each element $Y_i$ of $\mathcal{Y}$ is equal to either an element $X_j$ of the standard basis $\mathcal{X}$ or its inverse $-X_j$, and such that for every $i=1,\ldots,r$ we have either $\{\Theta_i\}=\chi_\mathcal{Y}(P)_{m-r+i}$ or $\{-\Theta_i\}=\chi_\mathcal{Y}(P)_{m-r+i}$.
\end{theorem}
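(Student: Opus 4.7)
I would prove this by structural induction on the recursive construction of the bracket forms $\Theta_1,\ldots,\Theta_r$, following the strategy implicit in Bergelson--Leibman \cite{berg.leib}, but carefully tracking the extra data we want: the nested Mal'cev basis property and the fact that each $Y_i$ equals $\pm X_j$ for some standard basis element $X_j$.

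\textbf{Base case.} Suppose each $\Theta_i$ is a constant-free polynomial form. Here I would choose $p$ to be at least $\max_i\deg\Theta_i$ and, for each $i$, build a matrix $P_i\in T_p$ whose above-diagonal entries are themselves polynomial forms in $n$ chosen so that the Mal'cev coordinate corresponding to the top-right position picks up $\Theta_i$ itself. The point is that matrix multiplication in $T_p$ produces the top-right entry as an explicit sum of products of the super-diagonal entries, and by assigning symbols $\alpha_j$ to those entries appropriately one can realise any polynomial form of the correct degree. Because $\Theta_i$ is constant free, the entries can be chosen constant free too. With the standard basis $\mathcal{X}_{p,r}$, the result then reads off directly.

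\textbf{Inductive step.} Assume the result for all strictly simpler constant-free bracket forms. Treat the four operations of Definition \ref{def:br.exp} in turn.

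For $\Theta_i=-\Theta_i'$, invoke induction on $\Theta_i'$ and replace the relevant basis element $X_j$ by $-X_j$; this flips the sign of the corresponding Mal'cev coordinate and gives $\{-\Theta_i'\}$. For $\Theta_i=\{\Theta_i'\}$, observe that taking a coordinate via $\chi_{\mathcal{Y}}$ already produces a fractional part, so if $\Theta_i'$ appears (up to sign) as $\chi_\mathcal{Y}(P)_{m-r+i}$ then $\{\Theta_i\}$ coincides with that same coordinate modulo~$1$. For $\Theta_i=\Theta_i^{(1)}+\Theta_i^{(2)}$, apply induction separately, combine the two resulting nilmanifolds inside a common $T_p^{r'}$ using direct product, and then build an auxiliary matrix whose Mal'cev coordinate computes the sum of two existing coordinates; since the standard basis is adapted to the lower central series, this can be arranged so that the basis remains nested and of the correct $\pm X_j$ form.

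The crux, and what I expect to be the main obstacle, is the multiplicative case $\Theta_i=\Theta_i^{(1)}\cdot\Theta_i^{(2)}$. Here, after applying induction to realise the two factors, one must enlarge the group — typically by raising the step $p$ by at least $1$ — so that a new Mal'cev basis vector encodes the commutator $[X_{j_1},X_{j_2}]$ whose associated coordinate is exactly the product $\Theta_i^{(1)}\Theta_i^{(2)}$. The bookkeeping is delicate for two reasons: one must re-order the basis (so that the new `product' vector appears in the correct position relative to the nesting filtration $\h_0\supset\h_1\supset\cdots$), possibly flipping signs to keep the $\pm X_j$ form; and one must verify that the resulting polynomial form $P$ is still constant free, which is clear for each atomic operation but must be preserved throughout. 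The nesting property (\ref{eq:nesting}) — as opposed to merely being a Mal'cev basis — is exactly the condition that allows the new commutator entry to lie in a deeper step $G_{s+1}$ of the filtration, which is how the product of two degree-$\le s$ bracket polynomials lands where it should. I would carry this out on the concrete model $T_p^r/Z_p^r$, using the explicit description of its standard basis and the computations from \cite[\S5]{berg.leib} to check at each inductive stage that the position $m-r+i$ in $\chi_\mathcal{Y}(P)$ computes (up to an overall sign) the fractional part of $\Theta_i$.
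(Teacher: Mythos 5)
Your proposal takes a genuinely different route from the paper, and the crucial inductive step contains a gap.

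The paper does not reprove anything like Bergelson--Leibman's result from scratch. It observes that \cite[Proposition 6.9]{berg.leib} already says exactly the right thing when interpreted over a general commutative ring: given a bracket expression $b$ in elements of a commutative ring $\mathcal{A}$, there is a nilmanifold $T_p/Z_p$, a Mal'cev basis $\mathcal{Y}$ consisting of standard basis elements and their negatives, and an upper-triangular matrix with entries from $\mathcal{A}$, such that $\{\pm b\}=\chi_\mathcal{Y}(\cdot)_m$. The paper simply takes $\mathcal{A}$ to be the ring of constant-free polynomial forms (this is precisely why Definition \ref{def:poly.form} builds that set into a ring) and then resolves five bookkeeping issues: the $[0,1)$ vs $(-1/2,1/2]$ fractional-part convention; nestedness (which follows because B--L's bases are taken in a ``legal order''); passing from one form to an $r$-tuple via direct products; equalising the $p_i$ via the embeddings $\iota_{p,q}$; and moving the $r$ relevant central basis vectors to the last $r$ positions. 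None of this requires reopening B--L's construction.

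Your structural induction is a different strategy, and it is not the one ``implicit in Bergelson--Leibman.'' Their argument does not proceed by induction on the operations $-,\{\cdot\},+,\cdot$; it passes through a canonical-form theorem for bracket polynomials (a substantial reduction, occupying much of \cite[\S\S4--6]{berg.leib}) and then reads the nilmanifold off the canonical form. The step you flag as the crux --- the multiplicative case --- is exactly where your sketch goes wrong. You suggest enlarging the group so that a new basis vector ``encodes the commutator $[X_{j_1},X_{j_2}]$ whose associated coordinate is exactly the product $\Theta_i^{(1)}\Theta_i^{(2)}$.'' But the Lie-bracket of two Mal'cev basis vectors has nothing to do with the arithmetic product of two bracket polynomials; in the Heisenberg model the coordinate you actually get from the commutator structure is $\{\alpha n[\beta n]\}$, which differs from $\{\alpha n\{\beta n\}\}$ by a genuine polynomial correction $\{\alpha\beta n^2\}$-type term that you would then have to absorb. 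Worse, for a product like $\{\alpha n\}\{\beta n\}$ there is no basis element whose commutator coordinate is $\{\{\alpha n\}\{\beta n\}\}$ in any direct way; getting fractional parts of products of fractional parts as Mal'cev coordinates is precisely the hard content that the canonical-form machinery handles. The additive case has a milder version of the same problem: from $\{\Theta^{(1)}\}$ and $\{\Theta^{(2)}\}$ appearing as separate coordinates (in a direct product, say) you do not immediately get a coordinate equal to $\{\Theta^{(1)}+\Theta^{(2)}\}$; you need the group multiplication (not the product nilmanifold) to produce the wrap-around, and verifying that the resulting basis is still of the required $\pm X_j$ form is non-trivial. So the proposal, as written, replaces the one deep input the paper deliberately treats as a black box with a sketch that would fail at exactly the step you identify as hardest. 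If you want to argue this way, you would need to reconstruct B--L's canonical-form reduction rather than attempt a direct induction on the operations.
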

Theorem \ref{thm:berg.leib} is not stated exactly in this way in Bergelson and Leibman's paper, but it can be read out of the work contained therein. In particular, Bergelson and Leibman express concrete bracket polynomials in terms of concrete polynomial mappings, whereas Theorem \ref{thm:berg.leib} expresses bracket forms in terms of polynomial forms. The reason for this modification is to make it clear that all implied constants appearing in our subsequent work are uniform across all realisations of a given bracket form. In Appendix \ref{sec:berg.leib} we offer a brief discussion of how to obtain Theorem \ref{thm:berg.leib} from the original work of Bergelson and Leibman.

It turns out to be useful to note, as we do in Lemma \ref{lem:finer.filt} below, that polynomial mappings into nilmanifolds are examples of slightly more specific objects called \emph{polynomial sequences} on nilmanifolds. We define these now.

\begin{definition}[Polynomial sequence in a nilpotent group]\label{def:poly.seq}
Let $G$ be a nilpotent group with a filtration $G_\bullet$ and let $g:\Z\to G$ be a sequence. For $h\in\Z$ define $\partial_hg(n):=g(n+h)g(n)^{-1}$. Then $g$ is said to be a \emph{polynomial sequence} with respect to the filtration $G_\bullet$ if $\partial_{h_i}\ldots\partial_{h_1}g$ takes values in $G_i$ for all $i\in\N$ and $h_1,\ldots,h_i\in\Z$.
\end{definition}
\begin{example}\label{ex:poly.seq}
In the Heisenberg group $T_2$, the sequence
\[
\left(\begin{array}{ccc}
1 & \alpha_1n & \beta n^2\\
0 & 1         & \alpha_2n\\
0 & 0         & 1
\end{array}\right)
\]
is a polynomial sequence with respect to the lower central series. In the group $T_3$ the sequence
\[
\left(\begin{array}{cccc}
1 & \alpha_1n & \beta_1n^2 & \gamma n^3\\
0 & 1         & \alpha_2n  & \beta_2n^2\\
0 & 0         & 1          & \alpha_3n\\
0 & 0         & 0          & 1
\end{array}\right)
\]
is a polynomial sequence with respect to the lower central series. More generally, if $g(n)$ is a sequence inside the group $T_p$ defined by a matrix, each of whose entries is a polynomial in $n$ of degree at most its distance from the main diagonal, then $g$ is a polynomial sequence with respect to the lower central series. We leave it to the reader to verify this fact.
\end{example}
The polynomial sequences given in Example \ref{ex:poly.seq} are of course also polynomial mappings into $T_p$. However, not every polynomial mapping is a polynomial sequence with respect to the lower central series, as can be seen by considering, for example, the mapping
\[
\left(\begin{array}{cc}
1 & \alpha n^2 \\
0 & 1
\end{array}\right)
\]
into $T_1$. It turns out, however, that every polynomial mapping into $T_p^r$ is a polynomial sequence with respect to \emph{some} filtration.
\begin{lemma}\label{lem:finer.filt}
Let $k,p,r\in\N$. Then there is a filtration $G_\bullet$ of $T_p^r$ of degree at most $O_{k,p}(1)$ with respect to which every polynomial mapping $\rho:\Z\to T_p^r$ of degree at most $k$ is a polynomial sequence, and such that the standard basis $\mathcal{X}$ for $T_p^r/Z_p^r$ is a Mal'cev basis adapted to $G_\bullet$.
\end{lemma}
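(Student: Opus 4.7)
The plan is to stretch the lower central series of $T_p^r$ by a factor of $k$. Writing $L_\bullet$ for the lower central series of $T_p^r$, so that $L_1=T_p^r\supset L_2\supset\cdots\supset L_p\supset L_{p+1}=\{1\}$, define $G_0:=T_p^r$ and $G_j:=L_{\lceil j/k\rceil}$ for $j\ge1$. Then $G_0=G_1=T_p^r$, the filtration terminates at $G_{kp+1}=\{1\}$ giving degree $kp=O_{k,p}(1)$, and the filtration property $[G_i,G_j]\subset G_{i+j}$ follows from $[L_a,L_b]\subset L_{a+b}$ together with the inequality $\lceil i/k\rceil+\lceil j/k\rceil\ge\lceil (i+j)/k\rceil$. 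Since the standard basis $\mathcal{X}$ is adapted to $L_\bullet$ (Remark \ref{rem:std.bas.is.mal}), and every $G_j$ coincides with some $L_l$, the basis $\mathcal{X}$ is also adapted to $G_\bullet$.

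Next I would verify that an arbitrary polynomial mapping $\rho:\Z\to T_p^r$ of degree at most $k$ is a polynomial sequence with respect to $G_\bullet$. Factoring with respect to the standard basis $\mathcal{X}=\{X_1,\ldots,X_m\}$, write
\[
\rho(n)=\exp(t_1(n)X_1)\cdots\exp(t_m(n)X_m),
\]
and write $d(X_i):=b_i-a_i$ for the distance of the non-zero entry of $X_i$ from the diagonal. An induction on $d(X_i)$, following the iterative procedure by which the $t_i$ are recovered from the matrix entries of $\rho$, shows that each $t_i$ is a polynomial in those entries of degree at most $d(X_i)$. Since each entry of $\rho$ is a polynomial in $n$ of degree at most $k$, this gives $\deg t_i\le k\cdot d(X_i)$.

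Because $X_i$ commutes with itself, the Baker--Campbell--Hausdorff formula collapses to the identity $\partial_h\exp(t_iX_i)=\exp((\partial_ht_i)X_i)$, and hence
\[
\partial_{h_1}\cdots\partial_{h_j}\exp(t_iX_i)=\exp\bigl((\partial_{h_1}\cdots\partial_{h_j}t_i)X_i\bigr)\in\exp(\R X_i)\subset L_{d(X_i)}=G_{k\cdot d(X_i)}.
\]
For $j>k\cdot d(X_i)$ the exponent is zero by the degree bound on $t_i$, so the derivative is the identity, which lies in $G_j$ trivially; for $j\le k\cdot d(X_i)$ it lies in $G_{k\cdot d(X_i)}\subset G_j$. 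Thus each factor $\exp(t_iX_i)$ is a polynomial sequence with respect to $G_\bullet$, and the general fact that the pointwise product of two polynomial sequences with respect to a filtration is again a polynomial sequence (due to Leibman and reproved by Green--Tao in \cite{poly.seq}) implies that $\rho$ itself is a polynomial sequence.

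The principal obstacle is balancing the polynomial-degree budget of the Mal'cev coordinates against the stretch factor of the filtration: stretching by any less than $k$ would allow $\deg t_i$ to overshoot the filtration depth of $X_i$ and destroy the conclusion of the previous paragraph, while stretching by $k$ exactly matches the two. The one step that is not a direct invocation of a standard fact is the degree bound $\deg t_i\le d(X_i)$ in the matrix entries, which is intuitively clear but requires an explicit induction unpacking the standard-basis factorization.
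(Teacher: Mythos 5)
Your proposal is correct, and it takes a genuinely different route from the paper's proof, which is shorter but less self-contained. The paper first invokes Lemma~\ref{lem:plane.home} to produce a constant $d=O_{k,p}(1)$ with the property that all $(d+1)$-fold derivatives of a degree-$\le k$ polynomial mapping vanish, and then defines $G_i:=G'_{\lceil i/d\rceil}$ and cites the procedure in the paragraphs following \cite[Corollary 6.8]{poly.seq} to conclude that $\rho$ is a polynomial sequence for this filtration. You instead bypass Lemma~\ref{lem:plane.home} altogether, stretch by the exact factor $k$, and verify the polynomial-sequence property directly by factoring $\rho$ in Mal'cev coordinates and invoking Leibman's theorem (Green--Tao \cite[Corollary 6.8]{poly.seq}) that polynomial sequences with respect to a fixed filtration are closed under pointwise products. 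What your approach buys is a tighter filtration degree ($kp$, as opposed to $pd$ with $d$ potentially much larger than $k$ — the proof of Lemma~\ref{lem:plane.home} iterates Lemma~\ref{lem:poly.map.properties}(ii) and so could produce a $d$ that grows rather faster than linearly in $k$) and a proof that stands without reference to the unstated Green--Tao procedure. What it costs is the degree bound on the Mal'cev coordinates, which you correctly identify as the one non-routine step and describe but do not carry out; for the record, the cleanest way to do it is to assign the $(i,j)$-entry of a matrix the weight $j-i$, observe that matrix multiplication produces entries that are weight-homogeneous polynomials in the entries of the factors (since $(AB)_{ij}=\sum_l A_{il}B_{lj}$ and $(l-i)+(j-l)=j-i$), and then track through the peeling-off procedure by which the $t_i$ are computed one off-diagonal at a time to see that every $t_i$ is a weight-$d(X_i)$-homogeneous polynomial in the entries of $g$, hence of degree at most $d(X_i)$ in those entries and at most $k\cdot d(X_i)$ in $n$. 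The remainder of your argument — the ceiling-function arithmetic for the filtration property, the observation that each $G_j$ equals some $L_l$ so the standard basis remains adapted, the collapse of Baker--Campbell--Hausdorff for a single one-parameter factor, and the inclusion $\exp(\R X_i)\subset L_{d(X_i)}=G_{k\cdot d(X_i)}$ — is all sound.
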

We prove Lemma \ref{lem:finer.filt} shortly, but first we note the following statement, which is a key ingredient of Lemma \ref{lem:finer.filt}.
\begin{lemma}\label{lem:plane.home}
Let $k,p,r\in\N$. Then there is a some $d\in\N$ depending only on $k$ and $p$ such that if $\rho$ is an arbitrary polynomial mapping of degree at most $k$ into $T_p^r$ then the derivatives $\partial_{h_{d+1}}\ldots\partial_{h_1}\rho$ are all trivial.
\end{lemma}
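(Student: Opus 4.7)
The plan is to reduce to the case $r = 1$ (using that $T_p^r$ is a direct product and $\partial_h$ acts on each factor independently) and then prove the statement for $\rho : \Z \to T_p$ with the explicit bound $d := pk$. I would work in exponential coordinates, using that $\log : T_p \to \mathfrak{t}_p$ is a polynomial diffeomorphism given on strictly upper-triangular $N$ by the truncated series
\[
\log(I+N) = N - \tfrac12 N^2 + \cdots + \tfrac{(-1)^{p-1}}{p} N^p.
\]
Writing $X(n) := \log \rho(n)$, one then observes that the entry of $X(n)$ on the $j$-th super-diagonal is a polynomial in $n$ of degree at most $jk$, because it is a sum of products of at most $j$ entries of $\rho$ (whose super-diagonal indices sum to $j$), each having degree at most $k$.

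The main claim, proved by induction on the number $s$ of derivatives, is that $\log(\partial_{h_s} \cdots \partial_{h_1} \rho)$ has $j$-th super-diagonal entries of degree at most $jk - s$ in $n$. For the inductive step I would use the Baker--Campbell--Hausdorff formula to expand
\[
\log\bigl(\sigma(n+h)\sigma(n)^{-1}\bigr) = \log\bigl(\exp(X_\sigma(n+h))\exp(-X_\sigma(n))\bigr)
\]
as a finite sum of iterated Lie brackets of $X_\sigma(n+h) = X_\sigma(n) + \partial_h X_\sigma(n)$ and $-X_\sigma(n)$. Any term in which every factor equals $\pm X_\sigma(n)$ vanishes, being an iterated bracket of copies of a single vector; so every surviving term contains at least one $\partial_h X_\sigma(n)$ factor, whose $l$-th super-diagonal entries have degree at most $lk - s - 1$ by polynomial differencing. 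Since the $(i, i+j)$-entry of any iterated bracket is a linear combination of products of entries whose super-diagonal indices sum to $j$, each surviving term contributes degree at most $jk - s - 1$ on the $j$-th super-diagonal, as required.

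Setting $s = pk+1$ drives the degree bound below zero on every super-diagonal, so $\log(\partial_{h_{pk+1}} \cdots \partial_{h_1} \rho) = 0$ and the iterated derivative equals the identity, establishing the lemma with $d = pk$. The principal obstacle is the bookkeeping in the BCH expansion, but this reduces to two simple facts: the bound ``degree at most $jk$ on the $j$-th super-diagonal'' is preserved by Lie brackets because super-diagonal indices add in matrix products; and each $\partial_h X_\sigma$ factor improves the degree bound on its super-diagonal by exactly one, giving the uniform drop of one per derivative.
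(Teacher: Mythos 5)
Your proof is correct, and it takes a genuinely different route from the paper's. The paper argues entirely at the group level, inducting on the filtration by subgroups $T_p(l)$ (upper-triangular matrices vanishing within distance $l$ of the diagonal): the group law restricted to the $(l+1)$-st super-diagonal of $T_p(l)$ is coordinatewise addition, so $k+1$ derivatives push the image from $T_p(l)$ into $T_p(l+1)$, while Lemma \ref{lem:poly.map.properties}(ii) (degree bound for the inverse map) controls the degree of the remaining entries at each stage. You instead pass to the Lie algebra via $\log$, track the degree of $\log(\partial_{h_s}\cdots\partial_{h_1}\rho)$ on each super-diagonal, and observe that in the (finite, since $\mathfrak{t}_p$ is nilpotent) Baker--Campbell--Hausdorff expansion of $\log(e^{X}e^{-A})$ with $X=A+D$ and $D=\partial_h A$, every Lie word of weight at least two vanishes when all slots are filled with $\pm A$, so each surviving term carries at least one $D$ factor and the degree on the $j$-th super-diagonal drops by at least one per derivative. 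What your approach buys: the explicit and essentially optimal bound $d = pk$ (the paper's inductive argument accumulates degree growth at each filtration step and only yields an implicit $O_{k,p}(1)$), and it avoids the inverse-map lemma entirely since $\exp(X)^{-1}=\exp(-X)$ handles inverses for free. What the paper's approach buys: it is slightly more self-contained in that it needs no BCH formula or Lie-algebra machinery, only the observation that the leading coordinate of each filtration quotient is additive. Both are valid elementary proofs of the same fact.
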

The proof of Lemma \ref{lem:plane.home} is a straightforward exercise, but given its importance to this paper we present it in full in Appendix \ref{ap:poly.map}.
\begin{proof}[Proof of Lemma \ref{lem:finer.filt}]
Let $G'_\bullet$ be the lower central series of $T_p^r$, which is a filtration of degree $p$, and let $d$ be the natural number given by Lemma \ref{lem:plane.home}. Following the procedure outlined in the paragraphs following \cite[Corollary 6.8]{poly.seq}, define a finer filtration $G_\bullet$ of degree $pd$ by setting $G_i=G'_{\lceil i/d\rceil}$. Then $\rho$ is a polynomial sequence for the filtration $G_\bullet$, and so the first conclusion of the lemma is proved.

The fact that $\mathcal{X}$ is a Mal'cev basis adapted to $G_\bullet$ follows straightforwardly from the fact (noted in Remark \ref{rem:std.bas.is.mal}) that it is a Mal'cev basis adapted to $G'_\bullet$, and from the fact that each $G_i$ is equal to some $G'_j$.
\end{proof}
Our objective in this section is to prove Theorem \ref{lem:general.Bohr.set.size.bound.lower}, which is a recurrence result for bracket polynomials, modulo 1. Moreover, in light of Theorem \ref{thm:berg.leib} and Lemma \ref{lem:finer.filt} the study of bracket polynomials reduces, in a sense, to the study of polynomial sequences on nilmanifolds. This suggests that it would be useful to understand the distribution of such polynomial sequences. We now describe deep work of Green and Tao investigating precisely this.

We start by defining a metric on a nilmanifold.
\begin{definition}[Metrics on nilmanifolds]Given a nilmanifold $G/\Gamma$ with a rational nested Mal'cev basis $\mathcal{X}$, we define a metric $d=d_{G,\mathcal{X}}$ on $G$ by taking the largest metric such that $d(x,y)\le|\psi(xy^{-1})|$ for all $x,y\in G$; here, and throughout this paper, $|\,\cdot\,|$ denotes the $\ell^\infty$-norm on $\R^{\dim G}$. We also define a metric $d=d_{G/\Gamma,\mathcal{X}}$ on $G/\Gamma$ by
\[
d(x\Gamma,y\Gamma)=\inf_{z\in\Gamma}d(x,yz).
\]
\end{definition}
\begin{remark}
It is shown in \cite[Lemma A.15]{poly.seq} that this is a metric on $G/\Gamma$. Note that, although the hypotheses of that lemma include the assumption that $\mathcal{X}$ is a Mal'cev basis adapted to some filtration, all that is used is that the elements of $\Gamma$ have integer coordinates, and that $\mathcal{X}$ is rational and nested (the assumption that $\mathcal{X}$ is nested being necessary in order to apply \cite[Lemmas A.4 and A.5]{poly.seq}).
\end{remark}
This definition is rather abstract. However, we never need to calculate it explicitly, and the only properties we require are detailed in \cite[Appendix A]{poly.seq}. The interested reader may find a more explicit formulation of this metric in \cite[Definition 2.2]{poly.seq}.

One immediate property of the metric $d$ is that it is \emph{right invariant}, in the sense that
\begin{equation}\label{eq:right.invar}
d(x,y)=d(xg,yg)
\end{equation}
for every $x,y,g\in G$. Another is that the metric $d$ is symmetric at the identity, in the sense that
\begin{equation}\label{eq:d.sym}
d(x,1)=d(x^{-1},1)
\end{equation}
for every $x\in G$.

Once we have a metric on $G/\Gamma$ we are able to make the following definitions.
\begin{definition}[Lipschitz norm]
Define the \emph{Lipschitz norm} $\|\,\cdot\,\|_\Lip$ on the space of Lipschitz functions $f:G/\Gamma\to\C$ by
\[
\|f\|_\Lip:=\|f\|_\infty+\sup_{x\ne y}\frac{|f(x)-f(y)|}{d(x,y)},
\]
\end{definition}
\begin{definition}[Equidistribution]\label{def:equidist}
Let $G/\Gamma$ be a nilmanifold, and write $\mu$ for the unique normalised Haar measure on $G/\Gamma$. A sequence $(g(n)\Gamma)_{n\in\Z}$ is said to be \emph{equidistributed} in $G/\Gamma$ if for every continuous function $f:G/\Gamma\to\C$ we have
\begin{displaymath}
\E_{n\in\Z}f(g(n)\Gamma)=\int_{G/\Gamma}fd\mu.
\end{displaymath}
Now let $\delta>0$ be a parameter and let $Q\subset\Z$ be an arithmetic progression of length $N$. A sequence $(g(n)\Gamma)_{n\in Q}$ is said to be \emph{$\delta$-equidistributed} in $G/\Gamma$ if for every Lipschitz function $f:G/\Gamma\to\C$ we have
\begin{displaymath}
\left|\E_{n\in Q}f(g(n)\Gamma)-\int_{G/\Gamma}fd\mu\right|\le\delta\|f\|_\Lip.
\end{displaymath}
We say that $(g(n)\Gamma)_{n\in Q}$ is \emph{totally $\delta$-equidistributed} if $(g(n)\Gamma)_{n\in Q'}$ is $\delta$-equidistributed for every subprogression $Q'\subset Q$ of length at least $\delta N$.
\end{definition}
The key result of Green and Tao shows that \emph{every} polynomial sequence $g$ on a nilmanifold $G/\Gamma$ has an `equidistributed' component, in a certain precise sense. More specifically, their result allows us to factor an arbitrary polynomial sequence $g$ on a nilmanifold $G/\Gamma$ as a product $\varepsilon g'\gamma$, in which $g'$ is $\delta$-equidistributed on some subnilmanifold $G'/\Gamma'$ of $G$, in which $\varepsilon$ is `almost constant' in a certain sense, and in which $\gamma$ is periodic with fairly short period. Thus, ignoring for the moment the effects of the `almost constant' sequence $\varepsilon$, we see that $g$ is roughly equidistributed on the union of a small number of translates of the subnilmanifold $G'/\Gamma'$.

For this to make sense, we must first define what we mean by a `subnilmanifold'.
\begin{definition}[Rational subgroups and subnilmanifolds]
Let $G/\Gamma$ be a nilmanifold with Mal'cev basis $\mathcal{X}=\{X_1,\ldots,X_m\}$. Suppose that $G'$ is a closed connected subgroup of $G$. We say that $G'$ is \emph{$Q$-rational} relative to $\mathcal{X}$ if the Lie algebra $\g'$ has a basis consisting of $Q$-rational combinations of the $X_i$. In this case, the subgroup $\Gamma'$, defined to be $G'\cap\Gamma$, is a discrete cocompact subgroup of $G'$, and so $G'/\Gamma'$ is a nilmanifold. We call $G'/\Gamma'$ a \emph{subnilmanifold} of $G/\Gamma$.
\end{definition}

We must also define a way in which a polynomial sequence can be `almost constant'.
\begin{definition}[Smooth sequences]
Let $M,N\ge1$. Let $G/\Gamma$ be a nilmanifold with Mal'cev basis $\mathcal X$, and let $d$ be the metric on $G$ associated to $\mathcal X$. Let $(\varepsilon(n))_{n\in\Z}$ be a sequence in $G$. Then we say that $(\varepsilon(n))_{n\in\Z}$ is \emph{$(M,N)$-smooth} if
\[
d(\varepsilon(n),1)\le M\qquad\qquad\text{and}\qquad\qquad d(\varepsilon(n),\varepsilon(n-1))\le M/N
\]
for all $n\in[N]$.
\end{definition}

We can now finally state precisely the factorisation theorem of Green and Tao.
\begin{theorem}[Green--Tao {\cite[Theorem 1.19]{poly.seq}}]\label{thm:poly.seq}
Let $M_0,N>0$ and let $A>0$. Let $G/\Gamma$ be a nilmanifold with a filtration $G_\bullet$ of degree $d$ and let $\mathcal{X}$ be an $M_0$-rational Mal'cev basis for $G/\Gamma$ adapted to $G_\bullet$. Let $g:\Z\to G$ be a polynomial sequence such that $g(0)=1$. Then there exist an integer $M$ with $M_0\le M\le M_0^{O_{A,G,G_\bullet}(1)}$; a rational subgroup $G'\subset G$; a Mal'cev basis $\mathcal{X}'$ for $G'/\Gamma'$ that is adapted to some filtration of $G'$ and in which each element is an $M$-rational combination of the elements of $\mathcal{X}$; and a decomposition $g=\varepsilon g'\gamma$ into polynomial sequences $\varepsilon,g',\gamma:\Z\to G$  satisfying the following conditions:
\begin{enumerate}
\renewcommand{\labelenumi}{(\roman{enumi})}
\item $\varepsilon:\Z\to G$ is $(M,N)$-smooth;
\item $g'$ takes values in $G'$ and the finite sequence $(g'(n)\Gamma')_{n\in[N]}$ is totally $1/M^A$-equidistributed in $G'/\Gamma'$ with respect to $\mathcal{X}'$;
\item $(\gamma(n)\Gamma')_{n\in\Z}$ is periodic with period at most $M$.
\item $\varepsilon(0)=g'(0)=\gamma(0)=1$
\end{enumerate}
\end{theorem}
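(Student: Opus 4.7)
The plan is to proceed by induction on $\dim G$, with the base case $\dim G=0$ trivial. For the inductive step consider the dichotomy: either the finite sequence $(g(n)\Gamma)_{n\in[N]}$ is already totally $1/M_0^A$-equidistributed, in which case we set $G'=G$, $\mathcal{X}'=\mathcal{X}$, $g'=g$ and $\varepsilon=\gamma=1$; or it is not, in which case one must exploit the failure of equidistribution to descend to a proper closed connected rational subgroup of $G$.

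The crucial input is a quantitative Leibman-type equidistribution criterion: if the sequence fails to be totally $\delta$-equidistributed in $G/\Gamma$, then there is a nontrivial continuous homomorphism $\eta:G\to\R/\Z$ that vanishes on $\Gamma$ (a \emph{horizontal character}) of complexity bounded polynomially in $\delta^{-1}$, such that the real-valued polynomial coefficients of the Taylor expansion of $n\mapsto\eta(g(n))$ are close, modulo $1$, to rationals of small denominator. Establishing this criterion is itself substantial work, proceeding by van der Corput differencing down the filtration to reduce to the abelianisation $G/[G,G]\Gamma$, where a classical Weyl equidistribution theorem applies. Given such an $\eta$ one writes $\eta\circ g=\psi_{\mathrm{slow}}+\psi_{\mathrm{rat}}+\text{negligible}$, where $\psi_{\mathrm{slow}}$ has very slowly varying coefficients and $\psi_{\mathrm{rat}}$ has rational coefficients with denominator at most some $q$ bounded polynomially in $\delta^{-1}$, and constructs the factorisation $g=\varepsilon g'\gamma$ by simultaneously splitting off these two contributions. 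The sequence $\varepsilon$ absorbs $\psi_{\mathrm{slow}}$ (yielding $(M,N)$-smoothness), the sequence $\gamma$ absorbs $\psi_{\mathrm{rat}}$ (yielding periodicity of period at most $q$), and $g'$ lies in the kernel of $\eta$, which is a closed rational subgroup $G''\subsetneq G$ of codimension one. That all three pieces remain polynomial sequences and satisfy $\varepsilon(0)=g'(0)=\gamma(0)=1$ follows from careful use of the Baker--Campbell--Hausdorff formula together with the Taylor-expansion calculus for polynomial sequences on a nilpotent Lie group.

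To close the induction, equip $G''$ with the restricted filtration $G''_i:=G''\cap G_i$ and a Mal'cev basis obtained by refining $\mathcal{X}$ along the direction killed by $\eta$, then apply the inductive hypothesis to $g'$ viewed as a polynomial sequence in $G''/(G''\cap\Gamma)$, and finally conjugate the resulting smooth and periodic parts back through $\varepsilon$ and $\gamma$. The main obstacle is uniform control of parameters through the iteration: the descent can be performed at most $\dim G$ times, but each descent inflates the rationality parameter $M$ by a polynomial factor depending on $A$, $G$ and $G_\bullet$, and one must track these blow-ups carefully to arrive at the stated bound $M\le M_0^{O_{A,G,G_\bullet}(1)}$. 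A secondary subtlety is that \emph{total} equidistribution, not merely equidistribution, must be delivered by induction; this forces one to apply the inverse theorem in a form sensitive to subprogressions from the outset, and to verify that the composition with the smooth part $\varepsilon$ and the periodic part $\gamma$ does not destroy total equidistribution of $g'$ on $G''/(G''\cap\Gamma)$.
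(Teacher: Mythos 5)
Your outline faithfully compresses Green and Tao's own proof of their factorisation theorem: the dichotomy between total equidistribution and descent to a proper rational subgroup, the quantitative Leibman-type criterion producing a horizontal character of bounded complexity (proved by van der Corput differencing down to the abelianisation and a Weyl-type theorem), the splitting of $\eta\circ g$ into smooth and rational parts absorbed by $\varepsilon$ and $\gamma$ with $g'$ confined to the kernel, and iteration with polynomial inflation of $M$ at most $\dim G$ times. The paper takes a different and much shorter route: it does not reprove any of this, but quotes the result as \cite[Theorem 1.19]{poly.seq} and confines itself to remarks verifying the two respects in which the statement here differs from the one cited --- namely that the added hypothesis $g(0)=1$ yields conclusion (iv), because in the proof of \cite[Theorem 9.2]{poly.seq} the sequences $\varepsilon$ and $\gamma$ have coordinates of the form $\sum_{j>0}v_j\binom{n}{j}$, which vanish at $n=0$, whence $\varepsilon(0)=\gamma(0)=1$ and then $g'(0)=1$ from $g=\varepsilon g'\gamma$; and that $\mathcal{X}'$ is adapted to a filtration of $G'$, which in \cite{poly.seq} is built into the definition of a Mal'cev basis and is realised by $G_\bullet\cap G'$. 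What citation buys is economy and reliability: the steps you assert (the Leibman criterion, the single-step factorisation, the preservation of total equidistribution on subprogressions, the bookkeeping behind $M\le M_0^{O_{A,G,G_\bullet}(1)}$) constitute the bulk of a long and difficult paper, and your sketch names them rather than proves them, so as written it is essentially an appeal to the same source. What your route would buy, if carried out in full, is self-containedness; but note that the two points this paper actually needs beyond the literature --- conclusion (iv) and the adaptedness of $\mathcal{X}'$ --- are exactly the ones your sketch treats most lightly (a generic appeal to Baker--Campbell--Hausdorff rather than the explicit binomial-coefficient normalisation above), so those are where any rewritten proof should be most explicit.
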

\begin{proof}[Remarks on the proof]
The statement of \cite[Theorem 1.19]{poly.seq} is not quite the same as the statement of Theorem \ref{thm:poly.seq}, in that there is no assumption that $g(0)=1$ and, correspondingly, there is no conclusion that $\varepsilon(0)=g'(0)=\gamma(0)=1$. One sees that the former implies the latter on inspection of the proof of \cite[Theorem 1.19]{poly.seq}. In fact, \cite[Theorem 1.19]{poly.seq} is an instance of \cite[Theorem 10.2]{poly.seq}. This in turn is obtained by repeated application of \cite[Theorem 9.2]{poly.seq}, which establishes \cite[Theorem 10.2]{poly.seq} in a certain special case.

The first part of the proof of \cite[Theorem 9.2]{poly.seq} reduces to the case in which the polynomial sequence under consideration takes value $1$ at $0$. Once in that case, it is straightforward to verify that the sequences $\varepsilon,g',\gamma$ arising from the proof also take the value $1$ at $0$. Indeed, the sequences $\varepsilon$ and $\gamma$ satisfy
\[
\psi(\gamma(n))=\sum_{j>0}v_j\binom{n}{j}\qquad\qquad\text{and}\qquad\qquad\psi(\varepsilon(n))=\sum_{j>0}v'_j\binom{n}{j}.
\]
Here, the $v_j,v'_j$ are certain real numbers, the values of which are superfluous for the purposes of this discussion since when $n=0$ the binomial coefficients appearing in the sums all take the value zero, and so $\gamma(0)$ and $\varepsilon(0)$ must both equal the identity. The condition $g=\varepsilon g'\gamma$ then implies that $g'(0)$ is also the identity, as claimed.

One could therefore simply require by definition that a polynomial sequence takes value $1$ at $0$, without affecting the truth of \cite[Theorem 9.2]{poly.seq}. The deduction of \cite[Theorem 10.2]{poly.seq} would proceed in exactly the same way as in \cite{poly.seq}, but with the additional conclusion that all polynomial sequences arising as a result would take the value $1$ at $0$.

The reader may also note that \cite[Theorem 1.19]{poly.seq} does not say explicitly that the Mal'cev basis $\mathcal{X}'$ for $G'/\Gamma'$ is adapted to a filtration of $G'$. However, this apparent omission is simply because the nomenclature of that paper is not quite the same as in this paper, in that in \cite{poly.seq} Mal'cev bases are, by definition, always adapted to some filtration. In fact, one sees from the proof of \cite[Theorem 10.2]{poly.seq} that the filtration of $G'$ to which $\mathcal{X}'$ is adapted is given by $G_\bullet\cap G'$.
\end{proof}
\begin{remark}
A slightly more careful inspection of the proof of \cite[Theorem 10.2]{poly.seq} reveals that, even in the absence of any assumption on $g(0)$, one can conclude that $\varepsilon(0)$ lies in the fundamental domain of $G/\Gamma$ and that $\gamma(0)\in\Gamma$, with $\varepsilon(0)\gamma(0)=g(0)$. Thus, in particular, if $g(0)$ is the identity then so too are $\varepsilon(0)$, $g'(0)$ and $\gamma(0)$.
\end{remark}
The following lemma, the proof of which we defer until Appendix \ref{sec:poly.seq}, gives an idea of how we will use the factorisation theorem of Green and Tao to deduce recurrence results for polynomial sequences.
\begin{lemma}\label{lem:equidist.to.recur}
Let $M\ge2$. Let $G/\Gamma$ be an $m$-dimensional nilmanifold with an $M$-rational nested Mal'cev basis $\mathcal{X}$, and let $d$ be the metric associated to $\mathcal{X}$. Let $\rho\le1$ and $x\in G$, and suppose that $g:[N]\to G$ is $\eta$-equidistributed in $G/\Gamma$. Then a proportion of at least
\[
\frac{\rho^m}{M^{O(m)}}-\frac{3\eta}{\rho}
\]
of the points $(g(n)\Gamma)_{n\in[N]}$ lie in the ball $\{y\Gamma:d(y\Gamma,x\Gamma)\le\rho\}$.
\end{lemma}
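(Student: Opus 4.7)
The plan is to replace the indicator function of the ball $B:=\{y\Gamma:d(y\Gamma,x\Gamma)\le\rho\}$ by a Lipschitz bump function $F$ supported in $B$, then apply the $\eta$-equidistribution hypothesis to the function $F$ and finally estimate the integral $\int_{G/\Gamma}F\,d\mu$ from below by a volume calculation.

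Concretely, I would define
\[
F(y\Gamma):=\max\left(0,\,1-\tfrac{2}{\rho}d(y\Gamma,x\Gamma)\right),
\]
so that $0\le F\le1$, $F$ is supported in $B$, $F$ equals $1$ on the smaller ball $B':=\{y\Gamma:d(y\Gamma,x\Gamma)\le\rho/2\}$, and $F$ is Lipschitz with constant $2/\rho$. Using $\rho\le1$, this gives $\|F\|_\Lip\le\|F\|_\infty+2/\rho\le 3/\rho$. The $\eta$-equidistribution hypothesis then yields
\[
\E_{n\in[N]}F(g(n)\Gamma)\ge\int_{G/\Gamma}F\,d\mu-\eta\|F\|_\Lip\ge\mu(B')-\tfrac{3\eta}{\rho},
\]
and since $F\le1_B$ the left-hand side is bounded above by the proportion of $n\in[N]$ with $g(n)\Gamma\in B$.

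It remains to bound $\mu(B')$ from below by $\rho^m/M^{O(m)}$. This is the step that uses the $M$-rationality and nestedness of $\mathcal{X}$: under these hypotheses the metric $d$ on $G/\Gamma$ is comparable to $\ell^\infty$-distance in Mal'cev coordinates up to a factor polynomial in $M$. In particular, a small $d$-ball of radius $\rho/2$ contains the image of a coordinate box of side length $\rho/M^{O(1)}$ under the exponential map, which has Haar measure at least $(\rho/M^{O(1)})^m$; this is a standard consequence of the volume and metric estimates collected in \cite[Appendix A]{poly.seq} (compare for example the argument of \cite[Lemma A.15]{poly.seq}).

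The only genuinely non-trivial step is this volume bound, and I expect it to be the main obstacle: one needs to verify that the bounds from \cite[Appendix A]{poly.seq} apply in the present setting, where $\mathcal{X}$ is nested and $M$-rational but need not, a priori, be adapted to any particular filtration. Once that is in place the lemma follows by combining the three displayed estimates above.
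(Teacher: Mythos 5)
Your approach matches the paper's essentially step for step: approximate the indicator of the ball by a Lipschitz bump, apply $\eta$-equidistribution to the bump, and bound the Haar measure of a ball below via the rationality of the basis. The paper proves the volume estimate as a separate lemma by invoking \cite[Lemma A.14]{poly.seq} and \cite[Lemma A.4]{poly.seq}, and it explicitly addresses the concern you raise: the filtration hypothesis in \cite[Lemma A.14]{poly.seq} is used only to invoke \cite[Lemma A.3]{poly.seq}, which needs only the nesting property.

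There is, however, one small slip in your write-up. The function you define,
\[
F(y\Gamma)=\max\left(0,\,1-\tfrac{2}{\rho}\,d(y\Gamma,x\Gamma)\right),
\]
does not equal $1$ on all of $B'=\{y\Gamma:d(y\Gamma,x\Gamma)\le\rho/2\}$ as you claim; it equals $1$ only at $y\Gamma=x\Gamma$ and vanishes already at radius $\rho/2$. Consequently the inequality $\int F\,d\mu\ge\mu(B')$ is false. To get a function that genuinely is $1$ on $B'$ and supported in $B$, you should measure distance to the \emph{ball} rather than to the \emph{point}: the paper takes
\[
f(y\Gamma)=\max\left\{0,\,1-\tfrac{2}{\rho}\,d\bigl(y\Gamma,B_{\rho/2}(x)\bigr)\right\},
\]
which is identically $1$ on $B_{\rho/2}(x)$, vanishes outside $B_\rho(x)$, and still has Lipschitz constant $2/\rho$. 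Alternatively you could keep your tent function but observe that $F\ge 1/2$ on the ball of radius $\rho/4$, giving $\int F\ge\tfrac12\mu\bigl(B_{\rho/4}(x)\bigr)\ge\rho^m/M^{O(m)}$ with a slightly worse implicit constant. Either fix is painless; once made, your argument is correct and coincides with the paper's.
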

Lemma \ref{lem:equidist.to.recur}, of course, shows that the component $g'$ of the polynomial sequence $g$ given by Theorem \ref{thm:poly.seq} is recurrent in a certain sense. In the context of proving Theorem \ref{lem:general.Bohr.set.size.bound.lower}, however, it is the sequence $g$ itself that we will need to be recurrent. The following lemma allows us to obtain recurrence of $g$ from recurrence of $g'$. Again, we defer the proof until Appendix \ref{sec:poly.seq}.
\begin{lemma}\label{lem:g'.to.g}
Let $\rho\le1$ and $\sigma$ be parameters. Let $G/\Gamma$ be a nilmanifold with an $M$-rational nested Mal'cev basis $\mathcal{X}$, suppose that $G'$ is a rational subgroup of $G$, and suppose that $\mathcal{X}'$ is a nested Mal'cev basis for $G'/\Gamma'$ in which each element is an $M$-rational combination of the elements of $\mathcal{X}$. Suppose that $\varepsilon\in G$ satisfies $d(\varepsilon,1)\le\sigma$, and that $\gamma\in\Gamma$. Finally, suppose that $g$ is an element of $G'$ such that $d'(g\Gamma',\Gamma')\le\rho$. Then $d(\varepsilon g\gamma\Gamma,\Gamma)\le M^{O(1)}\rho+\sigma$.
\end{lemma}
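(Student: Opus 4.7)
The plan is to successively strip off the factors $\varepsilon$ and $\gamma$ using the right-invariance of $d$, reducing the problem to a comparison between the metrics $d$ and $d'$ on the subgroup $G'$. Since $\gamma \in \Gamma$, the substitution $z = w\gamma$ in the infimum defining the quotient metric gives
\[
d(\varepsilon g \gamma \Gamma, \Gamma) \;=\; \inf_{w \in \Gamma} d(\varepsilon g, w) \;=\; d(\varepsilon g \Gamma, \Gamma),
\]
so $\gamma$ drops out immediately. Since $d'(g\Gamma', \Gamma') \le \rho$ and $\Gamma' \subset \Gamma$, we may pick $z' \in \Gamma' \subset \Gamma$ with $d'(g, z')$ arbitrarily close to $\rho$, so that
\[
d(\varepsilon g \Gamma, \Gamma) \;\le\; d(\varepsilon g, z') \;=\; d\bigl(\varepsilon g(z')^{-1}, 1\bigr)
\]
by right-invariance of $d$. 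A triangle inequality step, followed by another application of right-invariance, gives
\[
d\bigl(\varepsilon g(z')^{-1}, 1\bigr) \;\le\; d(\varepsilon, 1) + d\bigl(g(z')^{-1}, 1\bigr) \;\le\; \sigma + d\bigl(g(z')^{-1}, 1\bigr).
\]

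The main task is therefore to show that $d(g(z')^{-1}, 1) \le M^{O(1)} \rho$. Setting $x := g(z')^{-1}\in G'$, right-invariance of $d'$ gives $d'(x, 1) \le \rho$, and we must upgrade this to a bound on the ambient metric $d(x, 1)$, paying at most an $M^{O(1)}$ factor. This comparison between $d$ and $d'$ on $G'$ is the crux of the argument and the main obstacle. The approach is to pass through the associated Mal'cev coordinates: appealing to the relevant lemmas of \cite[Appendix A]{poly.seq}, the smallness of $d'(x,1)$ forces the $\mathcal{X}'$-Mal'cev coordinates to satisfy $|\psi'(x)| \le M^{O(1)} d'(x,1)$ (here we use $\rho\le 1$ to stay in a small ball where this equivalence holds); the hypothesis that each element of $\mathcal{X}'$ is an $M$-rational combination of elements of $\mathcal{X}$, combined with the Baker--Campbell--Hausdorff formula, then translates this into a bound $|\psi(x)| \le M^{O(1)} |\psi'(x)|$ for the $\mathcal{X}$-coordinates of $x$. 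The defining property $d(x,1)\le|\psi(x)|$ of the metric $d$ yields $d(x,1)\le M^{O(1)}\rho$.

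Combining this with the estimate $d(\varepsilon g \gamma \Gamma, \Gamma) \le \sigma + d(g(z')^{-1},1)$ obtained above gives the desired bound $d(\varepsilon g \gamma \Gamma, \Gamma)\le M^{O(1)}\rho + \sigma$. The technical heart of the argument is entirely concentrated in the comparison $d(x,1)\le M^{O(1)}d'(x,1)$ for $x\in G'$; all the other steps are formal manipulations using right-invariance of the metric and the fact that $\Gamma'\subset\Gamma$.
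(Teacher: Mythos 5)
Your proof is correct and takes essentially the same route as the paper's: both reduce to bounding $d(gz^{-1},1)$ for a suitable $z\in\Gamma'$ via the coordinate comparison lemmas of \cite[Appendix A]{poly.seq} (specifically the content of Lemmas A.4 and A.6), then clean up with right-invariance and the triangle inequality; the only cosmetic difference is that you strip off $\gamma$ at the outset by the substitution $z = w\gamma$, whereas the paper absorbs $\gamma$ and $z$ into $\Gamma$ at the final step.
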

An immediate issue with combining Theorems \ref{thm:berg.leib} and \ref{thm:poly.seq} is that the Mal'cev basis $\mathcal{Y}$ given by Theorem \ref{thm:berg.leib} is not necessarily adapted to the filtration $G_\bullet$ given by Lemma \ref{lem:finer.filt}. However, the following result shows that the coordinate system associated to $\mathcal{Y}$ is at least comparable to the metric associated to the standard basis, which \emph{is} a Mal'cev basis adapted to $G_\bullet$.
\begin{lemma}\label{lem:std.basis}
Let $\mathcal{Y}=\{Y_1,\ldots,Y_m\}$ be a nested Mal'cev basis for $T_p^r/Z_p^r$ in which each element $Y_i$ is equal to either an element $X_j$ of the standard basis $\mathcal{X}$ or its inverse $-X_j$. Then the nilmanifold coordinate system $\chi_\mathcal{Y}$ associated to $\mathcal{Y}$, and the metric $d$ associated to the standard basis $\mathcal{X}=\{X_1,\ldots,X_m\}$, satisfy
\[
|\chi_\mathcal{Y}(x)|\ll_{p,r}d(x\Gamma,\Gamma)
\]
for every $x\in T_p^r$.
\end{lemma}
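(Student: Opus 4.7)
The plan is to exploit the fact that $\mathcal{Y}$ differs from the standard basis $\mathcal{X}$ only by a reordering of its elements together with possible sign changes, so that the change of coordinates from $\psi_\mathcal{X}$ to $\psi_\mathcal{Y}$ is a polynomial map of $\R^m$ that fixes the origin and whose coefficients depend only on $p$ and $r$. This lets me transfer a small-norm bound from one coordinate system to the other.

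Set $\rho := d(x\Gamma,\Gamma)$. I may assume $\rho$ is smaller than some threshold $\eta_{p,r}>0$, since otherwise $|\chi_\mathcal{Y}(x)| \le 1/2 \ll_{p,r} \rho$ holds trivially. As $\rho$ is defined as an infimum, there is some $z\in\Gamma$ with $d(x,z) \le 2\rho$, which by right-invariance of $d$ is the same as $d(xz^{-1},1) \le 2\rho$. Because $\mathcal{X}$ is a rational nested Mal'cev basis, the estimates of \cite[Appendix A]{poly.seq} then give $|\psi_\mathcal{X}(xz^{-1})| \ll_{p,r} \rho$ (for $\rho$ small enough that we are in the relevant neighbourhood of the identity). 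To pass from $\psi_\mathcal{X}$ to $\psi_\mathcal{Y}$, I would note that repeatedly applying the Baker--Campbell--Hausdorff formula, which terminates after a bounded number of steps in the $p$-step nilpotent Lie algebra of $T_p^r$, shows that the map $\Psi:\R^m\to\R^m$ sending $\psi_\mathcal{X}(g)$ to $\psi_\mathcal{Y}(g)$ is polynomial, satisfies $\Psi(0)=0$, and has coefficients depending only on $p$ and $r$. Hence $\Psi$ is Lipschitz on a $(p,r)$-neighbourhood of the origin with a $(p,r)$-dependent constant, yielding $|\psi_\mathcal{Y}(xz^{-1})| \ll_{p,r} \rho$.

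Finally, by shrinking $\eta_{p,r}$ if needed I may ensure that $\psi_\mathcal{Y}(xz^{-1})\in(-1/2,1/2]^m$; the uniqueness part of \cite[Lemma A.14]{poly.seq} then forces $\chi_\mathcal{Y}(x) = \psi_\mathcal{Y}(xz^{-1})$, and the claimed bound follows. The main obstacle is the quantitative comparison between $\psi_\mathcal{X}$ and $\psi_\mathcal{Y}$, i.e.\ producing the polynomial $\Psi$ with an explicit Lipschitz estimate near $0$; this is a routine but potentially tedious BCH bookkeeping task, exploiting only that $T_p^r$ is $p$-step nilpotent and that the two bases are related by a bounded amount of combinatorial data (a permutation and a string of signs).
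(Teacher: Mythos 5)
Your proposal is correct and follows essentially the same route as the paper: reduce to the regime where $d(x\Gamma,\Gamma)$ is small, pick a coset representative $xz^{\pm1}$ near the identity, compare coordinates with the metric using the rationality of the two bases, and conclude via uniqueness of the fundamental-domain representative. The only material differences are (a) you take an approximate minimiser $z$ with $d(x,z)\le 2\rho$ rather than invoking, as the paper does, the earlier observation that the infimum over $\Gamma$ is attained (both work), and (b) the step you flag as the ``main obstacle'' --- the quantitative comparison between $\psi_\mathcal{X}$ and $\psi_\mathcal{Y}$ via Baker--Campbell--Hausdorff --- is precisely what \cite[Lemma A.4]{poly.seq} supplies, which the paper simply cites after noting that $\mathcal{Y}$ and $\mathcal{X}$ are $1$-rational combinations of one another. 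Your BCH argument would re-prove that lemma; it is not incorrect, but it is work the paper offloads to the reference (and you should note, when asserting that the coefficients of $\Psi$ and the threshold $\eta$ depend only on $p,r$, that this is because there are only finitely many admissible $\mathcal{Y}$ for given $p,r$, being obtained from $\mathcal{X}$ by a permutation and a choice of signs).
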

The proofs of Lemmas \ref{lem:equidist.to.recur}, \ref{lem:g'.to.g} and \ref{lem:std.basis} all essentially proceed by piecing together various results from \cite[Appendix A]{poly.seq}. We present the details in Appendix \ref{sec:poly.seq}. Modulo these proofs, it is now a fairly straightforward matter to combine the results of this section to prove Theorem \ref{lem:general.Bohr.set.size.bound.lower}, as follows. Recall that Proposition \ref{prop:recur.implies.non-uniform} and Theorem \ref{lem:general.Bohr.set.size.bound.lower} combine to give Theorem \ref{thm:bracket.non.uniform}.
\begin{proof}[Proof of Theorem \ref{lem:general.Bohr.set.size.bound.lower}]
Apply Theorem \ref{thm:berg.leib} and Lemma \ref{lem:std.basis} to obtain a constant-free polynomial form $P$ on a nilmanifold $G/\Gamma:=T_p^r/Z_p^r$ with a nested Mal'cev basis $\mathcal{Y}=\{Y_1,\ldots,Y_m\}$ such that
\begin{equation}\label{eq:Theta=xi}
\{\pm\Theta_i(n)\}=\chi_\mathcal{Y}(P(n))_{m-r+i},
\end{equation}
and such that the nilmanifold coordinate map $\chi_\mathcal{Y}$ associated to $\mathcal{Y}$ and the metric $d$ associated to the standard basis $\mathcal{X}=\{X_1,\ldots,X_m\}$ satisfy $|\chi_\mathcal{Y}(x)|\ll_{p,r}d(x\Gamma,\Gamma)$ for every $x\in G$. In fact, since $p$ and $r$ depend only on $\Theta_1,\ldots,\Theta_r$ we have
\begin{equation}\label{eq:std.basis}
|\chi_\mathcal{Y}(x)|\ll_{{\Theta_1},\ldots,{\Theta_r}}d(x\Gamma,\Gamma).
\end{equation}
Applying Lemma \ref{lem:finer.filt}, let $G_\bullet$ be a filtration of $G$ to which the standard basis is adapted, and with respect to which every realisation of $P$ is a polynomial sequence.

Let $A>0$ be a constant to be chosen later but depending only on $\Theta_1,\ldots,\Theta_r$ and $\delta$. Fix arbitrary realisations $\theta_1,\ldots,\theta_r$ of $\Theta_1,\ldots,\Theta_r$, respectively. Let $g$ be a realisation of $P$ such that
\begin{equation}\label{eq:theta=xi}
\{\pm\theta_i(n)\}=\chi_\mathcal{Y}(g(n))_{m-r+i}
\end{equation}
for $i=1,\ldots,r$; such a realisation exists by (\ref{eq:Theta=xi}). Note in particular that the fact that $P$ is constant free implies that $g(0)=1$.
Applying Theorem \ref{thm:poly.seq} with $M_0=2$ therefore gives an integer $M$ with $2\le M\ll_{A,G,G_\bullet}1$; a rational subgroup $G'\subset G$; a Mal'cev basis $\mathcal{X}'$ for $G'/\Gamma'$ adapted to some filtration of $G'$ and in which each element is an $M$-rational combination of the elements of $\mathcal{X}$; and a decomposition $g=\varepsilon g'\gamma$ into polynomial sequences $\varepsilon,g',\gamma:\Z\to G$ that satisfy conditions (i), (ii), (iii) and (iv) of Theorem \ref{thm:poly.seq}. In fact, since $A$ depends only on $\Theta_1,\ldots,\Theta_r$ and $\delta$, and since $G$ and $G_\bullet$ depend only on $\Theta_1,\ldots,\Theta_r$, we may assume that
\begin{equation}\label{eq:M}
2\le M\ll_{{\Theta_1},\ldots,{\Theta_r},\delta}1.
\end{equation}

Condition (iii) states that $(\gamma(n)\Gamma)_{n\in\Z}$ is periodic with period $q$, say, with $q\le M$. This last inequality, combined with the upper bound (\ref{eq:M}) on $M$, implies that in order to prove the proposition it is sufficient to show that a fraction $\Omega_{\Theta_1,\ldots,\Theta_r,\delta}(1)$ of the points in $q\N\cap[N]$ belong to the set $B_N(\theta_1,\ldots,\theta_r;I_\delta,\ldots,I_\delta)$, and so we may restrict attention to $q\N\cap[N]$ if we wish. Let us do so, replacing each $\theta_i$ by the function $\hat\theta_i$ defined by $\hat\theta_i(n)=\theta_i(qn)$; replacing $g$ by the sequence $\hat g$ defined by $\hat g(n)=g(qn)$; and replacing $N$ by $\lfloor N/q\rfloor$. Once these replacements are made, conditions (i), (ii), (iii) and (iv) of Theorem \ref{thm:poly.seq} become the following conditions:
\begin{enumerate}
\renewcommand{\labelenumi}{(\roman{enumi})}
\item $\varepsilon:\Z\to G$ is $(M,N)$-smooth;
\item $g'$ takes values in $G'$, and for any progression $Q\subset[N]$ of length at least $N/M^{A-1}$ the sequence $(g'(n)\Gamma')_{n\in Q}$ is $1/M^A$-equidistributed in $G'/\Gamma'$ with respect to $\mathcal{X}'$;
\item $\gamma$ takes values in $\Gamma$;
\item $\varepsilon(0)=g'(0)=\gamma(0)=1$.
\end{enumerate}
Since $A$ depends only on $\Theta_1,\ldots,\Theta_r$ and $\delta$, the inequality (\ref{eq:M}) implies that we may restrict attention to the subsequence $[N/M^{A-2}]$. We may therefore replace $N$ by $\lfloor N/M^{A-2}\rfloor$, and hence replace conditions (i) and (ii) by the following:
\begin{enumerate}
\renewcommand{\labelenumi}{(\roman{enumi})}
\item $d(\varepsilon(n),1)\le1/M^{A-3}$ for every $n\in[N]$;
\item $g'$ takes values in $G'$, and the finite sequence $(g'(n)\Gamma')_{n\in[N]}$ is $1/M^A$-equidistributed in $G'/\Gamma'$ with respect to $\mathcal{X}'$.
\end{enumerate}
Let $\rho\le1$ be a constant to be determined later. Lemma \ref{lem:equidist.to.recur} and condition (ii) together imply that
\begin{equation}\label{eq:prob.in.ball}
\Prob_{n\in[N]}(d'(g'(n)\Gamma',\Gamma')\le\rho)\ge\frac{\rho^m}{M^{O(m)}}-\frac{3}{\rho M^A}.
\end{equation}
Condition (i), on the other hand, combines with Lemma \ref{lem:g'.to.g} to imply that whenever $n\in[N]$ and $d'(g'(n)\Gamma',\Gamma')\le\rho$ we have $d(\varepsilon(n)g'(n)\gamma(n)\Gamma,\Gamma)\le M^{O(1)}\rho+1/M^{A-3}$. It therefore follows from (\ref{eq:prob.in.ball}) that
\[
\Prob_{n\in[N]}\left(d(g(n)\Gamma,\Gamma)\le M^{O(1)}\rho+1/M^{A-3}\right)\ge\frac{\rho^m}{M^{O(m)}}-\frac{3}{\rho M^A}.
\]
In light of (\ref{eq:std.basis}) and the lower bound (\ref{eq:M}) on $M$, this in turn implies that there exists a constant $C\ge1$ depending only on $\Theta_1,\ldots,\Theta_r$ such that
\[
\Prob_{n\in[N]}\left(|\chi_\mathcal{Y}(g(n))|\le M^C\left(\rho+\frac{1}{M^A}\right)\right)\ge\frac{\rho^m}{M^{Cm}}-\frac{3}{\rho M^A}.
\]
Setting $\rho=\delta/2M^C$ therefore implies that
\[
\Prob_{n\in[N]}\left(|\chi_\mathcal{Y}(g(n))|\le\frac{\delta}{2}+M^{C-A}\right)\ge\frac{\delta^m}{2^mM^{2Cm}}-\frac{6M^{C-A}}{\delta}.
\]
Thanks to the lower bound (\ref{eq:M}) on $M$, by setting $A$ sufficiently large in terms of $m$ and $C$, which depend only on $\Theta_1,\ldots,\Theta_r$, and $\delta$, we may therefore conclude that
\[
\Prob_{n\in[N]}(|\chi_\mathcal{Y}(g(n))|\le\delta)\ge\frac{\delta^m}{M^{O_{{\Theta_1},\ldots,{\Theta_r}}(m)}}.
\]
It follows from (\ref{eq:theta=xi}), the upper bound (\ref{eq:M}) on $M$ and the fact that $m$ depends only on $\Theta_1,\ldots,\Theta_r$ that
\[
\Prob_{n\in[N]}(\|\theta_i(n)\|_{\R/\Z}\le\delta\text{ for each }i=1,\ldots,r)\gg_{{\Theta_1},\ldots,{\Theta_r},\delta}1,
\]
and so the proposition is proved.
\end{proof}
\section{Weak recurrence of bracket polynomials}\label{sec:weak.recur}
If one includes the work of Bergelson--Leibman and Green--Tao that we used to prove Theorem \ref{lem:general.Bohr.set.size.bound.lower}, the proof of Theorem \ref{thm:bracket.non.uniform} is extremely long and difficult. In this section and Section \ref{sec:approx.loc.poly} we investigate the extent to which we can prove similar results using only elementary methods.

We concentrate our attention on an explicit model setting. Specifically, we consider the bracket polynomials $\phi_{k-1}$ defined by $\phi_{k-1}(n)=\alpha_{k-1}n\{\alpha_{k-2}n\{\ldots\{\alpha_1n\}\ldots\}\}$. Theorem \ref{thm:bracket.non.uniform} instantly tells us that
\begin{equation}\label{eq:anbncndn.U5}
\|e(\phi_{k-1})\|_{U^k[N]}\gg_k1.
\end{equation}
Our elementary methods will allow us to prove (\ref{eq:anbncndn.U5}) directly in the cases $k\le5$.

For $k\le3$ there is already nothing more to do. Indeed, the case $k=2$ is trivial, whilst the case $k=3$ follows from Proposition \ref{prop:recur.implies.non-uniform} and Lemma \ref{lem:linear.Bohr.set.size.bound.lower}. When $k\ge4$, however, $\phi_{k-1}$ has the non-linear bracket component $\{\alpha_2n\{\alpha_1n\}\}$, and so Lemma \ref{lem:linear.Bohr.set.size.bound.lower} does not apply to the set of bracket components of $\phi_{k-1}$. The cases $k=4,5$ therefore require some more work. We treat the case $k=4$ in this section, and then prove the case $k=5$ in Section \ref{sec:approx.loc.poly}.

Theorem \ref{lem:general.Bohr.set.size.bound.lower} is a very general result, but it is also somewhat stronger than is strictly necessary to prove Theorem \ref{thm:bracket.non.uniform}. For that purpose it would in fact be sufficient to establish recurrence of bracket polynomials in a weaker sense.
\begin{definition}[Weak recurrence (modulo 1)]
A set $\{\nu_1,\ldots,\nu_m\}$ of bracket polynomials on $[N]$ will be said to be \emph{$(\varepsilon,\lambda)$-weakly recurrent (modulo 1)} if
\[
|B_N(\nu_1,\ldots,\nu_m;I_{\frac{1}{2}-\varepsilon},\ldots,I_{\frac{1}{2}-\varepsilon})|\ge\lambda N.
\]
\end{definition}
Whilst the first notion of recurrence that we discussed required a positive fraction of the values of $\{\nu_i(n)\}$ to be very close to zero, weak recurrence requires only that a positive fraction of the values of $\{\nu_i(n)\}$ are not too close to $1/2$.

An elementary weak-recurrence result for arbitrary finite sets of bracket polynomials would give an elementary proof that bracket polynomials were non-uniform, thanks to the following result.
\begin{prop}\label{prop:weak.recur.implies.strong.poly}Let $\phi:[N]\to\R$ be a bracket polynomial of degree at most $k$. Suppose that $C(\phi)=\{\nu_1,\ldots,\nu_m\}$ is $(\varepsilon,\lambda)$-weakly recurrent (modulo 1). Then there exist intervals $J_1,\ldots,J_m\subset(-1/2,1/2]$ such that $|B_N(\nu_1,\ldots,\nu_m;J_1,\ldots,J_m)|\gg_{k,m,\varepsilon,\lambda}N$, and such that $\phi$ is strongly locally polynomial of degree at most $k$ on $B_N(\nu_1,\ldots,\nu_m;J_1,\ldots,J_m)$.
\end{prop}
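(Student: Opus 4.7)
The plan is to deduce this proposition as a fairly direct consequence of Lemma \ref{lem:strong.local.poly.set} together with the pigeonhole Lemma \ref{lem:pigeonhole.mod.1}. Weak recurrence gives us a set $A = B_N(\nu_1, \ldots, \nu_m; I_{\frac{1}{2}-\varepsilon}, \ldots, I_{\frac{1}{2}-\varepsilon})$ of size at least $\lambda N$ on which every $\{\nu_i\}$ already takes values in the fixed interval $I_{\frac{1}{2}-\varepsilon}$ of width $1-2\varepsilon$. Lemma \ref{lem:strong.local.poly.set} tells us that, to obtain strong local polynomiality on a set of the form $B_N(\nu_1,\ldots,\nu_m;J_1,\ldots,J_m)$, it is enough to take the $J_i$ of width at most some $\delta$, contained in an interval $I_{\frac{1}{2}-\varepsilon'}$, subject to the two constraints $\delta \le c_k$ and $\varepsilon' \ge k\delta$.

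With this in mind, I would first fix $\delta := \min\{c_k,\varepsilon/k\}$, which depends only on $k$ and $\varepsilon$. With this choice the parameters $\delta$ and $\varepsilon$ satisfy the hypotheses $\delta\le c_k$ and $\varepsilon \ge k\delta$ of Lemma \ref{lem:strong.local.poly.set}, so any subintervals $J_1,\ldots,J_m$ of $I_{\frac{1}{2}-\varepsilon}$ of width at most $\delta$ will automatically allow us to apply that lemma and conclude that $\phi$ is strongly locally polynomial of degree at most $k$ on $B_N(\nu_1,\ldots,\nu_m;J_1,\ldots,J_m)$.

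Next I would apply Lemma \ref{lem:pigeonhole.mod.1} with the interval $I = I_{\frac{1}{2}-\varepsilon}$, the set $A$ above, the functions $g_i(n) := \{\nu_i(n)\}$ (which by definition of $A$ do indeed map $A$ into $I$), and common width $\delta_1 = \cdots = \delta_m = \delta$. This yields subintervals $J_1,\ldots,J_m \subset I_{\frac{1}{2}-\varepsilon}$ of width $\delta$ such that
\[
\bigl|\{n \in A : \{\nu_i(n)\} \in J_i \text{ for all } i\}\bigr|
\;\gg_m\; \frac{\delta^m |A|}{(1-2\varepsilon)^m}
\;\ge\; \frac{\delta^m \lambda N}{(1-2\varepsilon)^m}.
\]
Because every $n$ in the set on the left-hand side automatically lies in $A$ (since $J_i \subset I_{\frac{1}{2}-\varepsilon}$), this set coincides with $B_N(\nu_1,\ldots,\nu_m;J_1,\ldots,J_m)$. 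Recalling that $\delta$ depends only on $k$ and $\varepsilon$, the lower bound above is $\gg_{k,m,\varepsilon,\lambda} N$, as required.

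Finally, since the $J_i$ produced have width at most $\delta$ and are contained in $I_{\frac{1}{2}-\varepsilon}$, the two conditions of Lemma \ref{lem:strong.local.poly.set} are met, and that lemma delivers the strongly locally polynomial conclusion for $\phi$ on $B_N(\nu_1,\ldots,\nu_m;J_1,\ldots,J_m)$. There is no real obstacle in this argument; the work is entirely in unpacking the definitions and checking that the quantitative constraints between $\delta$ and $\varepsilon$ demanded by Lemma \ref{lem:strong.local.poly.set} can be met for a choice of $\delta$ depending only on $k$ and $\varepsilon$, which is precisely what the definition $\delta = \min\{c_k,\varepsilon/k\}$ ensures.
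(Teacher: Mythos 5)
Your proof is correct and follows essentially the same route as the paper: the same choice $\delta=\min\{c_k,\varepsilon/k\}$, the same application of Lemma~\ref{lem:pigeonhole.mod.1} to the set $A=B_N(\nu_1,\ldots,\nu_m;I_{\frac{1}{2}-\varepsilon},\ldots,I_{\frac{1}{2}-\varepsilon})$ to locate the subintervals $J_i$, and the same invocation of Lemma~\ref{lem:strong.local.poly.set} for the strongly-locally-polynomial conclusion. The only difference is that you spell out a few checks (e.g.\ that the pigeonholed set actually equals $B_N(\nu_1,\ldots,\nu_m;J_1,\ldots,J_m)$) that the paper leaves implicit.
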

\begin{proof}
By weak recurrence we have
\[
|B_N(\nu_1,\ldots,\nu_m;I_{\frac{1}{2}-\varepsilon},\ldots,I_{\frac{1}{2}-\varepsilon})|\ge\lambda N.
\]
Let $c_k$ be the constant given by Lemma~\ref{lem:strong.local.poly.set}, and set $\delta=\min\{c_k,\varepsilon k^{-1}\}$. Applying Lemma~\ref{lem:pigeonhole.mod.1} to $\nu_1,\ldots,\nu_m$ with $A=B_N(\nu_1,\ldots,\nu_m;I_{\frac{1}{2}-\varepsilon},\ldots,I_{\frac{1}{2}-\varepsilon})$, with $\delta_1=\ldots=\delta_m=\delta$ and $I=I_{\frac{1}{2}-\varepsilon}$ gives intervals $J_1,\ldots,J_m$ of width $\delta$ inside $I_{\frac{1}{2}-\varepsilon}$ such that
\[
|B_N(\nu_1,\ldots,\nu_m;J_1,\ldots,J_m)|\gg_{k,m,\varepsilon,\lambda}N,
\]
Lemma~\ref{lem:strong.local.poly.set} then implies the desired result.
\end{proof}
\begin{remark}\label{rem:weak.recur.implies.large.Uk}
Combining Proposition~\ref{prop:weak.recur.implies.strong.poly} with Proposition~\ref{prop:strong.local.poly} shows that if $f(n):=e(\phi(n))$ for some $\phi$ of degree at most $k$ with $(\varepsilon,\lambda)$-weakly recurrent $C(\phi)=\{\nu_1,\ldots,\nu_m\}$ then $\|f\|_{U^{k+1}[N]}\gg_{k,m,\varepsilon,\lambda}1$.
\end{remark}
Proving weak recurrence (modulo 1) for an arbitrary bracket polynomial without appealing to the work of Bergelson--Leibman and Green--Tao appears to be somewhat difficult. However, building on Lemma~\ref{lem:linear.Bohr.set.size.bound.lower}, we are at least able to make some progress in the case that the set of bracket polynomials under consideration has at most one non-linear member.
\begin{prop}[Bracket linears and their product are weakly recurrent]\label{prop:low.depth.weak.recur}
Let $k,m,r\in\Z$ with $0\le m\le k,r$ and let $\alpha_0,\ldots,\alpha_r\in\R$. For $i=1,\ldots,r$ let $\nu_i:n\mapsto\{\alpha_in\}$ be a linear bracket polynomial. Let $\phi(n):=\alpha_0n^{k-m}\prod_{i=1}^m\nu_i(n)$. Then for $\varepsilon>0$ and $\varepsilon'\ll_{k,r}1$ we have
\[
|B_N(\phi,\nu_1,\ldots,\nu_r;I_{\frac{1}{2}-\varepsilon'},I_\varepsilon,\ldots,I_\varepsilon)|\gg_{k,r,\varepsilon}N.
\]
\end{prop}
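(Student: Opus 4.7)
The plan is to combine Lemma \ref{lem:linear.Bohr.set.size.bound.lower} (to handle the linear bracket components $\nu_i$) with a further pigeonholing on $\{\phi\}$ (to handle the remaining condition on $\phi$).

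First I would apply Lemma \ref{lem:linear.Bohr.set.size.bound.lower} to $\nu_1,\ldots,\nu_r$ with a parameter $\delta=\delta(k,r,\varepsilon)\le\varepsilon$ to be determined, obtaining a set $A\subset[N]$ with $|A|\gg_{r,\delta}N$ on which $\nu_i(n)\in I_\delta$ for all $i$. Next I would apply Lemma \ref{lem:pigeonhole.mod.1} to the map $n\mapsto\{\phi(n)\}$ on $A$ with width $2\varepsilon'$, obtaining an interval $J_0\subset(-1/2,1/2]$ of width $2\varepsilon'$ and a subset $A'\subset A$ with $|A'|\gg\varepsilon'|A|$ such that $\{\phi(n)\}\in J_0$ for every $n\in A'$. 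If $J_0\subset I_{\frac{1}{2}-\varepsilon'}$ then $A'$ itself is contained in the desired set and we are done; otherwise $J_0$ contains a point within $\varepsilon'$ of $\pm 1/2$ and we pass to differences.

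In that case I would fix $n_0\in A'$ and consider $n^*:=n_0-n$ for $n\in A'\setminus\{n_0\}$. The linear conditions $\nu_i(n^*)\in I_{2\delta}\subset I_\varepsilon$ hold automatically because both $\{\alpha_i n\}$ and $\{\alpha_i n_0\}$ lie in $I_\delta$, so that $\{\alpha_i n^*\}=\{\alpha_i n_0\}-\{\alpha_i n\}$. The remaining task---the core of the proof---is to show that $\{\phi(n^*)\}$ avoids the $\varepsilon'$-neighbourhood of $\pm 1/2$ for a positive fraction of such $n$. Expanding
\[
\phi(n^*)=\alpha_0(n_0-n)^{k-m}\prod_{i=1}^m\bigl(\{\alpha_i n_0\}-\{\alpha_i n\}\bigr),
\]
one obtains a sum of monomials in $n,n_0,\{\alpha_i n\},\{\alpha_i n_0\}$ with coefficients involving $\alpha_0$ and the $\alpha_i$. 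Since $\phi$ is non-linear for $k\ge2$, we do not have $\phi(n^*)\equiv\phi(n_0)-\phi(n)\pmod 1$, so the pigeonhole condition $\{\phi(n)\}\in J_0$ alone does not force $\{\phi(n^*)\}$ into a short interval. Instead, each monomial in the expansion should be controlled modulo $1$ by a further pigeonhole refinement of $A'$, applied to auxiliary linear quantities such as $\{\alpha_0\alpha_{i_1}\cdots\alpha_{i_s}n\}$ for the various subsets $\{i_1,\ldots,i_s\}\subset[m]$.

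The main obstacle is uniformity in the arbitrary real parameter $\alpha_0$: since $|\alpha_0|$ can be arbitrarily large, the naive bound $|\phi(n^*)|\le|\alpha_0|N^{k-m}(2\delta)^m<1/2$ cannot be arranged by simply choosing $\delta$ small in terms of $k,r,\varepsilon$. The argument must therefore organise the additional pigeonhole refinements so that each monomial in the expansion of $\phi(n^*)$ has its fractional part confined to a small fixed interval, while the density of the resulting subset of $[N]$ remains $\gg_{k,r,\varepsilon}1$ independently of the $\alpha_i$.
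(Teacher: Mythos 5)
Your proposal stalls exactly where you say it does, and the obstacle you identify is real: pairwise differences $n^*=n_0-n$ are the right tool only for \emph{linear} quantities like the $\nu_i$, where $\{\alpha_i n^*\}=\{\alpha_i n_0\}-\{\alpha_i n\}$, but they do not linearise the degree-$k$ expression $\phi$. No amount of further one-dimensional pigeonholing on ``auxiliary linear quantities'' such as $\{\alpha_0\alpha_{i_1}\cdots\alpha_{i_s}n\}$ will repair this, because once $k-m\ge2$ those auxiliary quantities are themselves nonlinear bracket polynomials, and the uniformity in $|\alpha_0|$ you worry about cannot be recovered by shrinking $\delta$. You have correctly diagnosed that your argument is incomplete; the review below is about what the missing step is.

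The idea the paper uses, and which your plan lacks, is to replace the first difference $n_0-n$ by the $k$-th discrete derivative along a $(k+1)$-term arithmetic progression inside the recurrence set. The key identity (Lemma~\ref{lem:kth.deriv.of.degree.k}) is that if $n,n+h,\ldots,n+kh$ all lie in $B_N(\nu_1,\ldots,\nu_m;I_\varepsilon,\ldots,I_\varepsilon)$ with $\varepsilon$ small enough, then
\[
(\Delta_h)^k\phi(n)=k!\,\phi(h),
\]
an \emph{exact} identity, not an estimate, so that the size of $\alpha_0$ plays no role. This is the degree-$k$ analogue of your observation $\nu_i(n^*)=\nu_i(n_0)-\nu_i(n)$: it exploits the homogeneity of $\phi$ of degree $k$ and the fact that brackets behave additively on the recurrence set. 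Once one additionally knows that $\{\phi(n+jh)\}$ lies in an interval $J$ of width $\delta$ for $j=0,\ldots,k$, the left side has fractional part in $I_{O_k(\delta)}$ by telescoping (Lemma~\ref{lem:frac.part.difference}), so $\{k!\,\phi(h)\}\in I_{O_k(\delta)}$; and since the $\nu_i(h)$ lie in $I_{2\varepsilon}$, one also has $\nu_i(k!h)=k!\,\nu_i(h)$ and hence $\phi(k!h)=(k!)^k\phi(h)$, placing $k!h$ in the recurrence set (Lemma~\ref{lem:kAPs.of.degree.k}). The paper then proves a dichotomy (Proposition~\ref{prop:low.depth.not.concentrated}): either $\{\phi\}$ is not $(1-\eta)$-concentrated on any short interval over $B_N(\nu_1,\ldots,\nu_r;I_\varepsilon,\ldots,I_\varepsilon)$ --- in which case the set where $\{\phi(n)\}\in I_{1/2-\varepsilon'}$ already has density $\gg_{k,r,\varepsilon}1$, independently of $\varepsilon'$ --- or it is, in which case one counts $(k+1)$-term APs in the recurrence set using Lemma~\ref{lem:linear.Bohr.set.size.bound.lower} and observes that concentration plus the bound ``each element lies in at most $k+1$ APs of a fixed common difference'' forces many complete APs into the concentrated set, producing $\gg_{k,r,\varepsilon}N$ values $k!h$ in $B_N(\phi,\nu_1,\ldots,\nu_r;I_{O_k(\delta)},I_{O_k(\varepsilon)},\ldots,I_{O_k(\varepsilon)})$. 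Your step (2) is also slightly off target on a secondary point: pigeonholing onto an interval of width $2\varepsilon'$ yields density only $\gg\varepsilon'$, which is not of the form $\gg_{k,r,\varepsilon}1$ because the implied constant is not allowed to depend on $\varepsilon'$; the dichotomy above avoids this by choosing $\eta=\eta(k,\varepsilon)$ and working with the \emph{complement} of a short interval.
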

We in fact prove Proposition~\ref{prop:low.depth.weak.recur} in the following form, which easily implies Proposition~\ref{prop:low.depth.weak.recur} when combined with Lemma~\ref{lem:linear.Bohr.set.size.bound.lower}.
\begin{prop}\label{prop:low.depth.not.concentrated}
Let $k,m,r\in\Z$ with $0\le m\le k,r$ and let $\alpha_0,\ldots,\alpha_r\in\R$. For $i=1,\ldots,r$ let $\nu_i:n\mapsto\{\alpha_in\}$ be a linear bracket polynomial. Let $\phi(n):=\alpha_0n^{k-m}\prod_{i=1}^m\nu_i(n)$. Let $\varepsilon\ll_k1$ and $\delta$ be parameters. Then there exists a real number $\eta\in(0,1)$, depending only on $k$ and $\varepsilon$, such that if there is some interval $J\subset(-1/2,1/2]$ of width at most $\delta$ for which
\[
|B_N(\phi,\nu_1,\ldots,\nu_r;J,I_\varepsilon,\ldots,I_\varepsilon)|\ge(1-\eta)|B_N(\nu_1,\ldots,\nu_r;I_\varepsilon,\ldots,I_\varepsilon)|
\]
then
\[
|B_N(\phi,\nu_1,\ldots,\nu_r;I_{O_k(\delta)},I_{O_k(\varepsilon)},\ldots,I_{O_k(\varepsilon)})|\gg_{k,r,\varepsilon}N.
\]
\end{prop}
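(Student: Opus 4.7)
Set $A := B_N(\nu_1, \ldots, \nu_r; I_\varepsilon, \ldots, I_\varepsilon)$ so that $|A| \gg_{r,\varepsilon} N$ by Lemma~\ref{lem:linear.Bohr.set.size.bound.lower}, and write $A' := \{n \in A : \{\phi(n)\} \in J\}$, so $|A'| \ge (1-\eta)|A|$ by hypothesis; let $c_J \in (-1/2, 1/2]$ denote the centre of $J$. The plan has three steps: compute $\Delta_h^k \phi(n)$ explicitly under the local polynomial structure of $\phi$; use the resulting identity at $n = 0$ together with $\phi(0) = 0$ to pin $\phi(h)\bmod 1$ down to a fixed constant depending on $c_J$ on a set of $h$'s of positive density; and finally use the homogeneity $\phi(th) = t^k \phi(h)$ for several integers $t$ to force that constant to vanish.

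Provided $\varepsilon$ is small enough in terms of $k$, Lemma~\ref{lem:frac.part.difference}(ii) gives $\{\alpha_i(n + jh)\} = \{\alpha_i n\} + j\{\alpha_i h\}$ for all relevant $n,h,j$ whenever $\{\alpha_i h\}$ is sufficiently small in terms of $k$. Hence $\phi(n + jh) = \alpha_0 (n+jh)^{k-m}\prod_i(\{\alpha_i n\} + j\{\alpha_i h\})$, viewed as a polynomial in $j$, has degree exactly $k$ with leading coefficient $\alpha_0 h^{k-m}\prod_i\{\alpha_i h\} = \phi(h)$, so $\Delta_h^k\phi(n) = k!\,\phi(h)$ under these local conditions, independent of $n$. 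Fixing $\varepsilon' \ll_k \varepsilon$ and letting $H$ denote the set of $h$ with $\{\alpha_i h\} \in I_{\varepsilon'}$ and $jh \in A'$ for every $j = 1, \ldots, k$, a union bound on $A\setminus A'$ together with Lemma~\ref{lem:linear.Bohr.set.size.bound.lower} shows $|H| \gg_{k,r,\varepsilon} N$ once $\eta$ is small in terms of $k$. For $h \in H$, applying the identity at $n = 0$ (the local conditions hold there since $\{\alpha_i\cdot 0\} = 0$) together with $\phi(0) = 0$ and $\{\phi(jh)\} \in c_J + (-\delta/2,\delta/2)$ gives
\[
k!\,\phi(h) = \Delta_h^k\phi(0) = \sum_{j=1}^k (-1)^{k-j}\binom{k}{j}\phi(jh) \equiv c_J\sum_{j=1}^k(-1)^{k-j}\binom{k}{j} = (-1)^{k+1}c_J \pmod 1
\]
up to error $O_k(\delta)$, where the final equality uses $\sum_{j=0}^k(-1)^{k-j}\binom{k}{j}=0$. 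Thus $\phi(h) \equiv c^* := (-1)^{k+1}c_J/k!\pmod 1$ on $H$ up to error $O_k(\delta)$.

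Choose a finite set $T_k \subset \{2, 3, \ldots\}$ with $\gcd_{t \in T_k}(t^k - 1) = 1$; this is possible because for every prime $p$ the choice $t = p$ gives $p\nmid p^k - 1$. Restrict $H$ to the subset $H'$ of $h$ with $\{\alpha_i h\} \in I_{\varepsilon'/\max T_k}$ and with $jth \in A'$ for every $j = 1,\ldots,k$ and every $t \in T_k$; taking $\eta$ still smaller (in terms of $k$ only) keeps $|H'| \gg_{k,r,\varepsilon} N$. For $h \in H'$ and $t \in T_k$, the stronger smallness of $\{\alpha_i h\}$ ensures $\{\alpha_i\cdot th\} = t\{\alpha_i h\}$ and hence $\phi(th) = t^k\phi(h)$ as real numbers; moreover $th \in H$, so by the previous paragraph $\phi(th) \equiv c^*\pmod 1$ as well. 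Combining these with $\phi(h) \equiv c^*\pmod 1$ yields $(t^k-1)c^* \equiv 0 \pmod 1$ up to error $O_k(\delta)$; for $\delta$ small in terms of $k$ (the proposition being trivial otherwise) this is an exact equality in $\R/\Z$, so $(t^k-1)c^*\in\Z$ for every $t \in T_k$. Writing $c^* = p/q$ in lowest terms thus forces $q \mid \gcd_{t \in T_k}(t^k-1) = 1$, so $c^* \in \Z$; since $|c^*| \le 1/(2\cdot k!) < 1$ we conclude $c^* = 0$. Hence $\phi(h) \in I_{O_k(\delta)} \pmod 1$ for every $h \in H'$, and since $H' \subset B_N(\phi,\nu_1,\ldots,\nu_r; I_{O_k(\delta)}, I_{O_k(\varepsilon)}, \ldots, I_{O_k(\varepsilon)})$ the proposition follows.

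The main obstacle is the density bookkeeping: after imposing the successive constraints (local additivity of fractional parts, $jh \in A'$ for $j$ in a range depending on $k$, and then the corresponding conditions for $jth$ over $t \in T_k$), one must verify that the set of surviving $h$ still has cardinality $\gg_{k,r,\varepsilon} N$. This is handled by using Lemma~\ref{lem:linear.Bohr.set.size.bound.lower} to locate the ambient Bohr set and then a union bound over the bad set $A \setminus A'$ (of size at most $\eta|A|$), choosing $\eta$ small in terms of $k$ and $T_k$ to absorb the union bound; since $T_k$ itself depends only on $k$, the resulting choice of $\eta$ depends only on $k$ and (via the size of the ambient Bohr set) $\varepsilon$, as required.
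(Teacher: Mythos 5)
Your approach is a genuine alternative to the paper's, but as written it contains an invalid step.

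For comparison, the paper's proof works via Lemma~\ref{lem:kAPs.of.degree.k}: if a progression $n, n+h, \ldots, n+kh$ with $h>0$ lies entirely in $B_{\lfloor N/k!\rfloor}(\phi,\nu_1,\ldots,\nu_r;J,I_\varepsilon,\ldots,I_\varepsilon)$, then $(\Delta_h)^k\phi(n)=k!\phi(h)$ lies in $I_{O_k(\delta)}$ modulo $1$ (telescoping within $J$), and the factor of $k!$ is absorbed by passing to $\phi(k!h)=(k!)^k\phi(h)$; the Proposition then follows by exhibiting $\gg_{k,r,\varepsilon}N$ such $h$ through a counting and union-bound argument over $A\setminus A'$. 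You instead specialise to $n=0$, scale by a set $T_k$ of integers, and aim to pin down the centre $c_J$ of $J$.

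The gap is in the sentence ``Thus $\phi(h)\equiv c^*:=(-1)^{k+1}c_J/k!\pmod 1$.'' From $k!\phi(h)\equiv(-1)^{k+1}c_J\pmod 1$ one cannot divide by $k!$ modulo $1$: knowing $\{k!y\}$ leaves $k!$ candidate values for $\{y\}$, namely $\{((-1)^{k+1}c_J+j)/k!\}$ for $j=0,\ldots,k!-1$. In fact you already know $\{\phi(h)\}$ to within $\delta/2$ from the hypothesis $h\in A'$ (taking $j=1$ in your definition of $H$): it lies in $J$, i.e. near $c_J$, not near $c^*$. So the asserted congruence $\phi(h)\equiv c^*$ is simply false in general.

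The observation that $\{\phi(h)\}\in J$ also supplies the repair, and shows your derivative computation is unnecessary. For $h\in H'$ and $t\in T_k$ one has $\{\phi(h)\},\{\phi(th)\}\in J$ directly, and $\phi(th)=t^k\phi(h)$ as real numbers once $\{\alpha_ih\}$ is small enough. Writing $\phi(h)$ as an integer plus an element of $J=c_J+(-\delta/2,\delta/2)$ and multiplying by $t^k$ gives $\|(t^k-1)c_J\|_{\R/\Z}\ll_k\delta$. Choosing integers $c_t$ (depending only on $k$) with $\sum_{t\in T_k}c_t(t^k-1)=1$ -- which is a strengthening of the $\gcd$ condition you stated, and is what the argument actually requires -- one obtains $\|c_J\|_{\R/\Z}\le\sum_t|c_t|\,\|(t^k-1)c_J\|_{\R/\Z}\ll_k\delta$, hence $|c_J|\ll_k\delta$ once $\delta$ is small in terms of $k$, so $J\subset I_{O_k(\delta)}$. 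Every $h\in H'$ then lies in the target Bohr set and the proof concludes. This repaired version is shorter than your original write-up and avoids both the identity $(\Delta_h)^k\phi(0)=k!\phi(h)$ and the $(k!)^k$ scaling used by the paper, at the price of the Bezout ingredient. One small caveat you share with the paper's own proof: since the ambient Bohr set size via Lemma~\ref{lem:linear.Bohr.set.size.bound.lower} depends on $r$, the resulting $\eta$ in fact depends on $r$ as well as on $k$ and $\varepsilon$.
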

We make use of two lemmas. We will say that a bracket polynomial is of degree \emph{exactly} $k$ if it is of degree at most $k$ but not of degree at most $k-1$.
\begin{lemma}\label{lem:kth.deriv.of.degree.k}
Suppose $\phi$ is an elementary bracket polynomial of degree exactly $k$ with bracket components $\nu_1,\ldots,\nu_m$. Then there exists $\hat{c}_k$ such that if $\varepsilon\le\hat{c}_k$ and $n,n+h,\ldots,n+kh$ all lie in $B_N(\nu_1,\ldots,\nu_m;I_\varepsilon,\ldots,I_\varepsilon)$ we have
\[
(\Delta_h)^k\phi(n)=k!\phi(h).
\]
\end{lemma}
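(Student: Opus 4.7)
The plan is to establish the identity by structural induction on the construction of the elementary bracket polynomial $\phi$, proving the stronger claim that the map $j \mapsto \phi(n+jh)$ agrees on $j \in \{0,1,\ldots,k\}$ with a polynomial in $j$ of degree exactly $k$ whose leading coefficient is precisely $\phi(h)$. Taking the $k$-th forward difference at $j = 0$ then yields $(\Delta_h)^k \phi(n) = k!\phi(h)$. Throughout, I would choose $\hat{c}_k$ of size roughly $2^{-k-O(1)}$, small enough that none of the fractional-part manipulations below overflow out of $(-1/2, 1/2]$.

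In the base case $\phi = cn^k$ we have $\phi(n+jh) = c(n+jh)^k$, which is literally a polynomial in $j$ of degree $k$ with leading coefficient $ch^k = \phi(h)$. The case $\phi = -\phi'$ is immediate from the inductive hypothesis on $\phi'$. In the multiplicative case $\phi = \phi_1 \cdot \phi_2$ with $\phi_i$ of exact formal degree $k_i$ and $k_1 + k_2 = k$, the inductive hypothesis applied to each $\phi_i$ gives that $\phi_i(n+jh)$ agrees on $\{0,\ldots,k\}$ with a polynomial in $j$ of degree $k_i$ and leading coefficient $\phi_i(h)$ (the extension of the polynomial interpolation to the full range follows from Proposition \ref{prop:local.poly.set} applied on the good set, since the bracket components of $\phi_i$ are among $\nu_1,\ldots,\nu_m$); their product is therefore a polynomial in $j$ of degree $k$ with leading coefficient $\phi_1(h)\phi_2(h) = \phi(h)$.

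The remaining case $\phi = \{\phi'\}$ is the technical core of the argument. Here $\phi'$ has the same formal degree $k$ as $\phi$, and the hypothesis provides $\phi(n+jh) = \{\phi'(n+jh)\} \in I_\varepsilon$ for each $j = 0, \ldots, k$. Iterating Lemma \ref{lem:c.psi(n)} $k$ times, and verifying at each stage using Lemma \ref{lem:frac.part.difference} that the relevant intermediate fractional parts remain in an interval of width less than $2^{-a}$ so the next step is valid, one obtains $(\Delta_h)^a \{\phi'\}(n) = \{(\Delta_h)^a \phi'(n)\}$ for every $a \le k$. Combining this at $a = k$ with the inductive claim for $\phi'$ gives $(\Delta_h)^k \phi(n) = \{k!\phi'(h)\}$, which coincides with $k!\phi(h) = k!\{\phi'(h)\}$ provided $|k!\{\phi'(h)\}| < 1/2$; this condition is automatic in the applications that motivate the lemma (notably Proposition \ref{prop:low.depth.not.concentrated}, in which every fractional-part sub-factor is linear and sits under a monomial factor, so that $\phi$ is never itself a top-level bracket component).

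The principal obstacle, and the most tedious part of the argument, is the interval-width bookkeeping needed to make every application of Lemma \ref{lem:c.psi(n)} valid. At each of the $O(k)$ layers of depth in the construction of $\phi$ and at each of the $O(k)$ levels of the finite difference, the ambient fractional parts must be forced into ever-smaller intervals, roughly doubling in width each time one invokes Lemma \ref{lem:frac.part.difference}. A careful accounting shows that any $\hat{c}_k$ of the form $2^{-Ck}$ with $C$ a sufficiently large absolute constant (or more safely $\hat{c}_k = 2^{-k^2}$) suffices to close every induction step simultaneously, and writing this bookkeeping out explicitly constitutes the bulk of the work.
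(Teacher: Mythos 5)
The paper dispenses with this lemma in one sentence (``The proof is a simple induction and left as an exercise to the reader''), so there is no written proof to compare against; your structural induction, via the stronger claim that $j\mapsto\phi(n+jh)$ interpolates a polynomial in $j$ of degree at most $k$ with leading coefficient $\phi(h)$, is exactly the intended route, and your handling of the negation and product cases --- in particular the appeal to Proposition~\ref{prop:local.poly.set} to extend each factor's interpolating polynomial from $\{0,\ldots,k_i\}$ to $\{0,\ldots,k\}$ --- is sound.

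Two points deserve to be made explicit rather than left implicit. First, you silently take the base case to be a monomial $cn^k$. This is not merely a convenience: if $\phi(n)=n^2+n$ counts as an elementary bracket polynomial of degree $2$ (and Definition~\ref{def:bracket.poly} does not obviously forbid a genuine polynomial with several terms as a base case) then $(\Delta_h)^2\phi(n)=2h^2\neq 2(h^2+h)=2!\,\phi(h)$, so the lemma as literally stated would be false. The restriction to monomial leaves is therefore an unstated hypothesis, harmless in the paper's actual application (Lemma~\ref{lem:kAPs.of.degree.k}, where every genuine-polynomial piece is a monomial), but it should be said out loud.

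Second, you are right to flag the case $\phi=\{\phi'\}$ with $\phi'$ of degree $k\ge2$ as genuinely problematic. Your argument delivers $(\Delta_h)^k\phi(n)=\{k!\,\phi'(h)\}$, whereas the claim is $k!\,\{\phi'(h)\}$; these agree only when $\{\phi'(h)\}\in(-\frac{1}{2k!},\frac{1}{2k!}]$, and the hypotheses only force $\{k!\,\phi'(h)\}$, not $\{\phi'(h)\}$, to be small. Closing this even for $\phi'=\alpha n^2$, $k=2$ needs a genuine extra idea: one rules out $\{\alpha h^2\}$ near $1/2$ using the exact identity $(\alpha nh)^2=(\alpha n^2)(\alpha h^2)$ together with the hypothesis that $\alpha n^2,\alpha(n{+}h)^2,\alpha(n{+}2h)^2$ are all near integers --- hardly a ``simple induction.'' Your observation that in Lemma~\ref{lem:kAPs.of.degree.k} every $\{\cdot\}$ wraps a degree-$1$ polynomial (so $k!=1$ at that level and the condition is vacuous) is correct and is what actually rescues the lemma where it is used; but a proof of the lemma as literally stated would still need to address this case, and you should say so plainly rather than wave at ``the applications.'' As for the bookkeeping, $\hat{c}_k\sim 2^{-k}$ already suffices: the interval containing $\{(\Delta_h)^a\nu(n)\}$ only doubles with each increment of the difference order $a$, and the nesting of brackets does not compound the loss because the inductive hypothesis is invoked at every level with the same $\varepsilon$.
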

The proof is a simple induction and left as an exercise to the reader.
\begin{lemma}\label{lem:kAPs.of.degree.k}
Let $k,m,r\in\Z$ with $0\le m\le k,r$ and let $\alpha_0,\ldots,\alpha_r\in\R$. For $i=1,\ldots,r$ let $\nu_i:n\mapsto\{\alpha_in\}$ be a linear bracket polynomial. Let $\phi(n):=\alpha_0n^{k-m}\prod_{i=1}^m\nu_i(n)$. Let $\hat{c}_k$ be as in Lemma \ref{lem:kth.deriv.of.degree.k}, and let $\varepsilon\le\hat{c}_k$ and $\delta>0$ be parameters. Suppose that $n\in[N]$ and $h>0$ and that there exists an interval $J\subset(-1/2,1/2]$ of width at most $\delta$ such that $n,n+h,\ldots,n+kh$ all lie in $B_{\lfloor N/k!\rfloor}(\phi,\nu_1,\ldots,\nu_r;J,I_\varepsilon,\ldots,I_\varepsilon)$. Then
\[
k!h\in B_N(\phi,\nu_1,\ldots,\nu_r;I_{O_k(\delta)},I_{O_k(\varepsilon)},\ldots,I_{O_k(\varepsilon)}).
\]
\end{lemma}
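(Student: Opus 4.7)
The plan is to verify each of the three conditions for $k!h$ to lie in $B_N(\phi,\nu_1,\ldots,\nu_r; I_{O_k(\delta)}, I_{O_k(\varepsilon)},\ldots,I_{O_k(\varepsilon)})$. Membership in $[N]$ is immediate: since $n+kh \le \lfloor N/k!\rfloor$ and $n\ge 1$, we have $h \le N/(k\cdot k!)$, whence $k!h \le N/k \le N$. For each linear component $\nu_i$, I would apply Lemma~\ref{lem:frac.part.difference}(i) to $\{\alpha_i n\},\{\alpha_i(n+h)\}\in I_\varepsilon$ to obtain $\{\alpha_i h\}\in I_{2\varepsilon}$, and then, provided $\varepsilon\ll_k 1$ so that $k!\cdot 2\varepsilon < 1/2$, conclude the scaling identity $\{\nu_i(k!h)\} = \{\alpha_i k!h\} = k!\,\{\alpha_i h\} \in I_{O_k(\varepsilon)}$.

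The core of the argument is the control of $\{\phi(k!h)\}$, which I would obtain by extracting two independent identities for $(\Delta_h)^k\phi(n)$. Analytically, from $\{\phi(n+jh)\}\in J$ for $j=0,\ldots,k$ with $|J|\le\delta$, iterated applications of Lemma~\ref{lem:frac.part.difference}(i) give $\{\Delta_h^\ell\phi(n+jh)\}\in I_{2^{\ell-1}\delta}$ for $0\le j\le k-\ell$ by induction on $\ell$, valid provided $\delta\ll_k 1$ so that each successive doubling of widths keeps the intermediate intervals safely inside $(-1/2,1/2]$. Taking $\ell=k$ yields $\{(\Delta_h)^k\phi(n)\}\in I_{2^{k-1}\delta}$. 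Algebraically, the hypotheses $\varepsilon\le\hat c_k$ and $n+jh\in B_N(\nu_1,\ldots,\nu_m;I_\varepsilon,\ldots,I_\varepsilon)$ allow me to apply Lemma~\ref{lem:kth.deriv.of.degree.k}, which gives the \emph{exact} identity $(\Delta_h)^k\phi(n) = k!\,\phi(h)$. Combining the two yields $\{k!\,\phi(h)\} \in I_{2^{k-1}\delta}$.

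To finish, I would use the scaling $\{\alpha_i k!h\} = k!\{\alpha_i h\}$ established in the first step to compute directly
\[
\phi(k!h) = \alpha_0(k!h)^{k-m}\prod_{i=1}^m\{\alpha_i k!h\} = (k!)^{k-m}(k!)^m\,\alpha_0 h^{k-m}\prod_{i=1}^m\{\alpha_i h\} = (k!)^{k-1}\cdot k!\,\phi(h).
\]
Since $(k!)^{k-1}$ is an integer, writing $k!\,\phi(h) = M+\eta$ with $M\in\Z$ and $|\eta|<2^{k-1}\delta$ gives $\phi(k!h) = (k!)^{k-1}M + (k!)^{k-1}\eta$, so $\{\phi(k!h)\}$ equals the fractional part of $(k!)^{k-1}\eta$, and in particular lies in $I_{(k!)^{k-1}2^{k-1}\delta}=I_{O_k(\delta)}$.

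The main obstacle is the bookkeeping in this final step: the $O_k$-notation in the conclusion conceals the requirement that $\varepsilon$ and $\delta$ be taken sufficiently small in terms of $k$ so that (a) $k!\{\alpha_i h\}$ really represents the fractional part $\{\alpha_i k!h\}$ without wrapping past $1/2$, and (b) the repeated doubling of widths from Lemma~\ref{lem:frac.part.difference}(i), followed by the final multiplication by the integer $(k!)^{k-1}$, leaves the resulting error strictly below $1/2$ so that it is genuinely a fractional part. Both constraints are of the form $\delta,\varepsilon\ll_k 1$, consistent with the statement.
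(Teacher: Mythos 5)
Your proof is correct and follows essentially the same route as the paper's: invoke Lemma~\ref{lem:kth.deriv.of.degree.k} for the exact identity $(\Delta_h)^k\phi(n)=k!\phi(h)$, use iterated applications of Lemma~\ref{lem:frac.part.difference}(i) to show $\{k!\phi(h)\}$ is small, scale the linear components via $\nu_i(k!h)=k!\nu_i(h)$, and deduce $\phi(k!h)=(k!)^k\phi(h)$. The only cosmetic difference is that where you carry out the integer-multiplication step explicitly by writing $k!\phi(h)=M+\eta$, the paper instead appeals to repeated applications of Lemma~\ref{lem:frac.part.difference}(ii); the underlying content is identical.
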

\begin{proof}
The fact that $n,n+h,\ldots,n+kh\in B_{\lfloor N/k!\rfloor}(\phi,\nu_1,\ldots,\nu_r;J,I_\varepsilon,\ldots,I_\varepsilon)$ implies in particular that $n,n+h,\ldots,n+kh\in B_N(\nu_1,\ldots,\nu_m;I_\varepsilon,\ldots,I_\varepsilon)$, and so Lemma~\ref{lem:kth.deriv.of.degree.k} implies that
\begin{equation}\label{eq:kAPs.of.degree.k.1}
(\Delta_h)^k\phi(n)=k!\phi(h).
\end{equation}
However, the fact that ${\phi(n+jh)}\in J$ for all $j$ implies, by Lemma~\ref{lem:frac.part.difference} (i) and induction, that
\[
(\Delta_h)^k\phi(n)\in I_{O_k(\delta)},
\]
which combined with (\ref{eq:kAPs.of.degree.k.1}) of course implies that
\begin{equation}\label{eq:kAPs.of.degree.k.2}
k!\phi(h)\in I_{O_k(\delta)}.
\end{equation}
Now the fact that $\nu_i(n)$ and $\nu_i(n+h)$ both lie in $I_\varepsilon$ for all $i$ implies, by Lemma~\ref{lem:frac.part.difference} (i), that
\begin{equation}\label{eq:kAPs.of.degree.k.3}
\nu_i(h)\in I_{2\varepsilon}\text{ for all $i$},
\end{equation}
which in turn implies, by Lemma~\ref{lem:frac.part.difference} (ii), that $\nu_i(k!h)=k!\nu_i(h)$ for all $i$. This implies that $\phi(k!h)=(k!)^k\phi(h)$, which combined with (\ref{eq:kAPs.of.degree.k.2}) and Lemma~\ref{lem:frac.part.difference} (ii) gives
\begin{equation}\label{eq:kAPs.of.degree.k.4}
\phi(k!h)\in I_{O_k(\delta)}.
\end{equation}
Furthermore, (\ref{eq:kAPs.of.degree.k.3}) and Lemma~\ref{lem:frac.part.difference} (ii) imply that
\begin{equation}\label{eq:kAPs.of.degree.k.5}
\nu_i(k!h)\in I_{O_k(\varepsilon)}\text{ for all $i$}.
\end{equation}
Combining (\ref{eq:kAPs.of.degree.k.4}) and (\ref{eq:kAPs.of.degree.k.5}) yields the desired result.
\end{proof}
\begin{proof}[Proof of Proposition~\ref{prop:low.depth.not.concentrated}]
\begin{sloppypar}
By Lemma~\ref{lem:kAPs.of.degree.k} it suffices to find $\Omega_{k,r,\varepsilon}(N)$ values of $h$ for each of which there exists at least one progression $n,n+h,\ldots,n+kh$ of common difference $h$ contained within $B_{\lfloor N/k!\rfloor}(\phi,\nu_1,\ldots,\nu_r;J,I_\varepsilon,\ldots,I_\varepsilon)$. We can certainly find many values of $h$ for which there exist such progressions contained within $B_{\lfloor N/k!\rfloor}(\nu_1,\ldots,\nu_r;I_\varepsilon,\ldots,I_\varepsilon)$; indeed, by Lemma~\ref{lem:linear.Bohr.set.size.bound.lower} we have
\begin{equation}\label{eq:low.depth.not.concentrated.1}
|B_{\lfloor N/(k+1)!\rfloor}(\nu_1,\ldots,\nu_r;I_{\varepsilon/(k+1)},\ldots,I_{\varepsilon/(k+1)})|\gg_{k,r,\varepsilon}N,
\end{equation}
and if $h\in B_{\lfloor N/(k+1)!\rfloor}(\nu_1,\ldots,\nu_r;I_{\varepsilon/(k+1)},\ldots,I_{\varepsilon/(k+1)})$ then for \emph{every} $n\in B_{\lfloor N/(k+1)!\rfloor}(\nu_1,\ldots,\nu_r;I_{\varepsilon/(k+1)},\ldots,I_{\varepsilon/(k+1)})$ we have $n,n+h,\ldots,n+kh\in B_{\lfloor N/k!\rfloor}(\nu_1,\ldots,\nu_r;I_\varepsilon,\ldots,I_\varepsilon)$. Writing $a=a_{k,r,\varepsilon}$ for the constant implicit in (\ref{eq:low.depth.not.concentrated.1}), so that
\[
|B_{\lfloor N/(k+1)!\rfloor}(\nu_1,\ldots,\nu_r;I_{\varepsilon/(k+1)},\ldots,I_{\varepsilon/(k+1)})|>aN,
\]
we may conclude that for at least $\Omega_{k,r,\varepsilon}(N)$ values of $h$ we have at least $aN$ values of $n$ for which $n,n+h,\ldots,n+kh\in B_{\lfloor N/k!\rfloor}(\nu_1,\ldots,\nu_r;I_\varepsilon,\ldots,I_\varepsilon)$.

Fix such an $h$. Now $B_{\lfloor N/k!\rfloor}(\phi,\nu_1,\ldots,\nu_r;J,I_\varepsilon,\ldots,I_\varepsilon)$ contains all but $\eta N$ of the points in $B_{\lfloor N/k!\rfloor}(\nu_1,\ldots,\nu_r;I_\varepsilon,\ldots,I_\varepsilon)$, and each element of $B_{\lfloor N/k!\rfloor}(\nu_1,\ldots,\nu_r;I_\varepsilon,\ldots,I_\varepsilon)$ can belong to at most $k+1$ progressions $n,n+h,\ldots,n+kh$, and so $B_{\lfloor N/k!\rfloor}(\phi,\nu_1,\ldots,\nu_r;J,I_\varepsilon,\ldots,I_\varepsilon)$ contains at least all but $(k+1)\eta N$ of the progressions $n,n+h,\ldots,n+kh$ in $B_{\lfloor N/k!\rfloor}(\nu_1,\ldots,\nu_r;I_\varepsilon,\ldots,I_\varepsilon)$.

In particular, if we fix $\eta<a/(k+1)$ then the set $B_{\lfloor N/k!\rfloor}(\phi,\nu_1,\ldots,\nu_r;J,I_\varepsilon,\ldots,I_\varepsilon)$ contains at least one such progression. The value of $h$ was chosen arbitrarily from a set of cardinality $\Omega_{k,r,\varepsilon}(N)$, and so the proposition is proved.
\end{sloppypar}
\end{proof}
\begin{proof}[Proof of (\ref{eq:anbncndn.U5}) in the case $k=4$]The function $\phi_2$ is of the form required in order to apply Proposition~\ref{prop:low.depth.weak.recur}, and so the bracket components of the function $\phi_3$ are weakly recurrent (modulo 1). The case $k=4$ then follows from Remark~\ref{rem:weak.recur.implies.large.Uk}.
\end{proof}
\begin{remark}
More generally, and by an identical proof, the function
\[
f:n\mapsto e\left(\gamma n^t\left\{\beta n^r\prod_{i=1}^m\{\alpha_in\}\right\}^s\right)
\]
satisfies $\|f\|_{U^{s(r+m)+t+1}[N]}\gg_{m,r,s,t}1$.
\end{remark}
\section{Approximately locally polynomial functions}\label{sec:approx.loc.poly}
We can push slightly further than Section~\ref{sec:weak.recur} and prove the $k=5$ case of (\ref{eq:anbncndn.U5}) by relaxing the definition of being locally polynomial of degree $k-1$. The most obvious modification is to require the $k$th derivatives to vanish only modulo 1, since it is only their value modulo 1 that will affect the quantity $e(\Delta_{h_1,\ldots,h_k}\phi(n))$. Indeed, as was remarked in the introduction, we have been considering bracket polynomials as functions into $\R$, rather than into $\R/\Z$, only because it made some of the proofs cleaner in earlier sections.

\begin{sloppypar}Another natural way in which it is possible to weaken the definition is not even to require the derivatives to vanish (modulo 1), but instead to require that $\|\Delta_{h_1,\ldots,h_k}\phi(n)\|_{\R/\Z}<\delta$ for some $\delta\ll1$, as this would still be sufficient to introduce some bias into the sum $\E_{n\in[N],h\in[-N,N]^k}e(\Delta_{h_1,\ldots,h_k}\phi(n))$.\end{sloppypar}
\begin{definition}[Approximately locally polynomial (modulo 1)]
Let $\phi:[N]\to\R$ be a function and let $B\subset[N]$. Then $\phi$ is said to be \emph{$\delta$-approximately locally polynomial of degree $k-1$ (modulo 1)} on $B$ if whenever $n\in[N]$ and $h\in[-N,N]^k$ satisfy $n+\omega\cdot h\in B$ for all $\omega\in\{0,1\}^k$ we have
\begin{equation}\label{eq:approx.loc.poly}
\|\Delta_{h_1,\ldots,h_k}\phi(n)\|_{\R/\Z}\le\delta.
\end{equation}
If (\ref{eq:approx.loc.poly}) holds whenever $n+\omega\cdot h\in B$ for all $\omega\in\{0,1\}^k\backslash\{\mathbf0\}$ then $\phi$ is said to be \emph{strongly $\delta$-approximately locally polynomial of degree at most $k-1$ (modulo 1)} on $B$.
\end{definition}
The utility of making this definition lies in the following result.
\begin{prop}\label{prop:approx.strong.local.poly}
Let $\phi:[N]\to\R$ be a function and define $f:[N]\to\C$ by $f(n):=e(\phi(n))$. Let $\delta\in[0,1/4)$ be a parameter and suppose that $\phi$ is strongly $\delta$-approximately locally polynomial of degree at most $k$ (modulo 1) on some set $B\subset[N]$ with $|B|\gg N$. Then $\|f\|_{U^{k+1}[N]}\gg_{k,\delta}1$.
\end{prop}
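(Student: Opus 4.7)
The plan is to follow almost exactly the proof of Proposition \ref{prop:strong.local.poly}, with the only modification needed to handle the fact that the $(k+1)$-fold derivatives of $\phi$ no longer vanish exactly (or exactly modulo $1$) on the relevant configurations, but only vanish up to an error of $\delta$ modulo $1$.

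Concretely, I would begin by applying Lemma~\ref{lem:application.of.gcs} with $k$ replaced by $k+1$, which yields
\[
\|f\|_{U^{k+1}[N]}\gg_k\left|\E_{n\in[N],h\in[-N,N]^{k+1}}\left(e(\Delta_{h_1,\ldots,h_{k+1}}\phi(n))\textstyle\prod_{\omega\in\{0,1\}^{k+1}\backslash\{\mathbf0\}}1_B(n+\omega\cdot h)\right)\right|.
\]
The hypothesis that $\phi$ is strongly $\delta$-approximately locally polynomial of degree at most $k$ on $B$ says exactly that whenever the product of indicators on the right-hand side is non-zero we have $\|\Delta_{h_1,\ldots,h_{k+1}}\phi(n)\|_{\R/\Z}\le\delta$, and hence
\[
\mathrm{Re}\,e(\Delta_{h_1,\ldots,h_{k+1}}\phi(n))\ge\cos(2\pi\delta)>0,
\]
the last inequality using the assumption $\delta<1/4$. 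Bounding the absolute value below by the real part and using this pointwise lower bound therefore gives
\[
\|f\|_{U^{k+1}[N]}\gg_k\cos(2\pi\delta)\cdot\E_{n,h}\textstyle\prod_{\omega\ne\mathbf0}1_B(n+\omega\cdot h).
\]

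To finish, I would observe that dropping the (possibly small) factor $1_B(n)$ can only increase the product of indicators, so
\[
\E_{n,h}\textstyle\prod_{\omega\ne\mathbf0}1_B(n+\omega\cdot h)\ge\E_{n,h}\textstyle\prod_{\omega\in\{0,1\}^{k+1}}1_B(n+\omega\cdot h),
\]
and the latter quantity is $\gg_{k,\sigma}1$ by Lemma~\ref{lem:boxes.for.gcs} (applied with $j=k+1$ and with $\sigma:=|B|/N$), giving $\|f\|_{U^{k+1}[N]}\gg_{k,\delta,\sigma}1$ as required. There is essentially no hard step here: the argument is a direct quantitative deformation of Proposition~\ref{prop:strong.local.poly}, and the only small subtlety is that one must pass from the absolute value in Lemma~\ref{lem:application.of.gcs} to its real part in order to exploit the pointwise bound $\cos(2\pi\delta)>0$, for which the restriction $\delta<1/4$ in the hypothesis is exactly what is needed.
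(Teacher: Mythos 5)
Your argument is correct and is essentially identical to the paper's: both apply Lemma~\ref{lem:application.of.gcs} at level $k+1$, pass from the absolute value to the real part using $\operatorname{Re}\,e(\Delta_{h_1,\ldots,h_{k+1}}\phi(n))\ge\cos(2\pi\delta)>0$ (the paper writes this simply as $\gg_\delta 1$), drop the factor $1_B(n)$, and conclude via Lemma~\ref{lem:boxes.for.gcs}. Nothing differs in substance.
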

\begin{proof}
The definition of being strongly $\delta$-approximately locally polynomial of degree $k$ (modulo 1) on $B$ implies that $\operatorname{Re}(e(\Delta_{h_1,\ldots,h_{k+1}}\phi(n)))\gg_\delta1$ whenever $n+\omega\cdot h\in B$ for each $\omega\in\{0,1\}^{k+1}\backslash\{\mathbf0\}$, and so
\begin{displaymath}
\begin{split}
\left|\E_{n\in[N],h\in[-N,N]^{k+1}}\left(e(\Delta_{h_1,\ldots,h_{k+1}}\phi(n))\textstyle\prod_{\omega\in\{0,1\}^{k+1}\backslash\{\mathbf0\}}1_B(n+\omega\cdot h)\right)\right|\qquad\qquad\qquad\qquad\\
\gg_\delta\Prob_{n\in[N],h\in[-N,N]^{k+1}}(n+\omega\cdot h\in B\text{ for all }\omega\in\{0,1\}^{k+1}\backslash\{\mathbf0\}).
\end{split}
\end{displaymath}
However, this last quantity is trivially at least $\Prob_{n\in[N],h\in[-N,N]^{k+1}}(n+\omega\cdot h\in B\text{ for all }\omega\in\{0,1\}^{k+1})$, which by Lemma~\ref{lem:boxes.for.gcs} is at least $\Omega_k(1)$. An application of Lemma \ref{lem:application.of.gcs} therefore completes the proof.
\end{proof}
\begin{lemma}\label{lem:simple.deriv}Suppose that $\nu:[N]\to\R$ is a bracket polynomial that is strongly locally polynomial of degree $k-1$ on $A\subset[N]$, and define $\phi(n):=\lambda n\{\nu(n)\}$. Let $J\subset(-1/2,1/2]$ be an interval with $|J|\le2^{-k}$. Then whenever $n+\omega\cdot h\in B_N(\nu;J)\cap A$ for all $\omega\in\{0,1\}^{k+1}\backslash\{\mathbf0\}$ we have $\Delta_{h_1,\ldots,h_{k+1}}\phi(n)\in\{q\lambda n:q\in\Z,|q|\ll_k1\}$.
\end{lemma}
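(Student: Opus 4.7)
The plan is to expand $\Delta_{h_1,\ldots,h_{k+1}}\phi(n)$ via the discrete Leibniz rule applied to the product $\phi=f\cdot g$, where $f(n):=\lambda n$ and $g(n):=\{\nu(n)\}$. Since $f$ is affine, $\Delta_{h_1,h_2}f\equiv 0$, so a straightforward induction on $k$ (mirroring the computation in the proof of Lemma~\ref{lem:psi1.psi2}) yields the identity
\[
\Delta_{h_1,\ldots,h_{k+1}}\phi(n) \;=\; \lambda n\,\Delta_{h_1,\ldots,h_{k+1}}g(n) \;+\; \sum_{i=1}^{k+1}\lambda h_i\,\Delta_{h^{(i)}}g(n+h_i),
\]
where $h^{(i)}:=(h_1,\ldots,h_{i-1},h_{i+1},\ldots,h_{k+1})$. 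It therefore suffices to show that every $k$-fold term $\Delta_{h^{(i)}}g(n+h_i)$ vanishes and that $\Delta_{h_1,\ldots,h_{k+1}}g(n)$ is an integer of absolute value $\ll_k 1$.

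For the $k$-fold terms, fix $i\in\{1,\ldots,k+1\}$ and set $B_i:=\{n+h_i+\omega'\cdot h^{(i)}:\omega'\in\{0,1\}^k\}$. Every element of $B_i$ is of the form $n+\omega\cdot h$ with $\omega_i=1$, hence $\omega\ne\mathbf0$, so the hypothesis places $B_i\subset B_N(\nu;J)\cap A$. Because $\nu$ is strongly locally polynomial of degree $k-1$ on $A$ it is \emph{a fortiori} locally polynomial of degree $k-1$ on $B_i$, and because $\{\nu\}(B_i)\subset J$ with $|J|\le 2^{-k}<2^{-(k-1)}$, Lemma~\ref{lem:c.psi(n)} upgrades this to the statement that $g=\{\nu\}$ is itself locally polynomial of degree $k-1$ on $B_i$. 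Applied with base point $n+h_i$ and shifts $h^{(i)}$, this gives $\Delta_{h^{(i)}}g(n+h_i)=0$.

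For the remaining $(k+1)$-fold term I would write $g=\nu-[\nu]$ and note that
\[
\Delta_{h_1,\ldots,h_{k+1}}\nu(n) \;=\; \Delta_{h_1,\ldots,h_k}\nu(n+h_{k+1}) \;-\; \Delta_{h_1,\ldots,h_k}\nu(n),
\]
where strong local polynomiality of $\nu$ kills each term separately: in both cases the $2^k-1$ relevant shifted points correspond to multi-indices $\omega\in\{0,1\}^{k+1}\backslash\{\mathbf0\}$, and therefore lie in $A$ by hypothesis. Hence $\Delta_{h_1,\ldots,h_{k+1}}g(n)=-\Delta_{h_1,\ldots,h_{k+1}}[\nu](n)$ is an integer, and the crude bound $|\Delta_{h_1,\ldots,h_{k+1}}g(n)|\le 2^{k+1}\|g\|_\infty\le 2^k$ forces its absolute value to be at most $2^k$. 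Combining this with the vanishing of the $k$-fold terms produces $\Delta_{h_1,\ldots,h_{k+1}}\phi(n)=q\lambda n$ for some $q\in\Z$ with $|q|\ll_k 1$.

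The only delicate aspect is bookkeeping: at each invocation of (strong) local polynomiality and at the application of Lemma~\ref{lem:c.psi(n)} one must verify that the $2^j$ corners involved are all supplied by the lemma's hypothesis, i.e.\ correspond to $\omega\ne\mathbf0$. The expansion is set up precisely so that the unique corner with $\omega=\mathbf0$ (about which the hypothesis provides no information) sits inside the factor $\lambda n$ rather than inside the argument of $g$; this is exactly why the final answer is a pure integer multiple of $\lambda n$, rather than an $\R$-linear combination of $\lambda n$ and the $\lambda h_i$.
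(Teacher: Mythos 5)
Your proof is correct and follows essentially the same route as the paper's: both expand $\Delta_{h_1,\ldots,h_{k+1}}\phi(n)$ by splitting the linear factor $\lambda(n+\omega\cdot h)$ into $\lambda n$ plus the $\lambda h_i$, kill the resulting $k$-fold terms $\Delta_{h^{(i)}}\{\nu\}(n+h_i)$ by invoking Lemma~\ref{lem:c.psi(n)} on a set containing all the non-zero corners, and then handle the $(k+1)$-fold term $\lambda n\,\Delta_{h_1,\ldots,h_{k+1}}\{\nu\}(n)$ by using the strong local polynomiality of $\nu$ to see that $\Delta_{h_1,\ldots,h_{k+1}}\nu(n)=0$ and hence that $\Delta_{h_1,\ldots,h_{k+1}}\{\nu\}(n)=-\Delta_{h_1,\ldots,h_{k+1}}[\nu](n)$ is a bounded integer. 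The only cosmetic difference is that you apply Lemma~\ref{lem:c.psi(n)} to the finite corner set $B_i$ while the paper applies it to $B_N(\nu;J)\cap A$; both are valid.
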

\begin{corollary}\label{cor:simple.deriv}
Suppose that $\nu:[N]\to\R$ is a bracket polynomial that is strongly locally polynomial of degree $k-1$ on $A\subset[N]$. Let $J\subset(-1/2,1/2]$ be an interval with $|J|\le2^{-k}$, and let $\delta>0$ be a parameter. Then the bracket polynomial $\phi:[N]\to\R$ defined by $\phi(n):=\lambda n\{\nu(n)\}$ is strongly $O_k(\delta)$-approximately locally polynomial of degree $k$ on $B_N(\lambda n,\nu;I_\delta,J)\cap A$.
\end{corollary}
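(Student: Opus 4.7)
The plan is to deduce this corollary almost immediately from Lemma \ref{lem:simple.deriv}: that lemma already pins down the exact form of the $(k{+}1)$-fold discrete derivative of $\phi$, and all that remains is to bound the resulting quantity modulo $1$.

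First I would unpack the hypothesis. Suppose $n\in[N]$ and $h\in[-N,N]^{k+1}$ satisfy $n+\omega\cdot h\in B_N(\lambda n,\nu;I_\delta,J)\cap A$ for every $\omega\in\{0,1\}^{k+1}\setminus\{\mathbf0\}$. In particular each such vertex lies in $B_N(\nu;J)\cap A$, so the hypotheses of Lemma \ref{lem:simple.deriv} are met and that lemma furnishes an integer $q$ with $|q|\ll_k 1$ such that
\[
\Delta_{h_1,\ldots,h_{k+1}}\phi(n)=q\lambda n.
\]
The problem therefore reduces to showing $\|\lambda n\|_{\R/\Z}\ll\delta$, after which passing to $\R/\Z$ and multiplying by $q$ gives $\|\Delta_{h_1,\ldots,h_{k+1}}\phi(n)\|_{\R/\Z}\ll_k\delta$ as required.

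For the bound on $\|\lambda n\|_{\R/\Z}$ I would exploit the parallelogram identity
\[
\lambda n=\lambda(n+h_1)+\lambda(n+h_2)-\lambda(n+h_1+h_2),
\]
which holds literally in $\R$ by linearity. The three points on the right are the vertices $n+\omega\cdot h$ for $\omega\in\{e_1,e_2,e_1+e_2\}$, all of which lie in $\{0,1\}^{k+1}\setminus\{\mathbf0\}$ (using $k+1\ge2$). By hypothesis each of these vertices lies in $B_N(\lambda n;I_\delta)$, so the three fractional parts $\{\lambda(n+h_1)\}, \{\lambda(n+h_2)\}, \{\lambda(n+h_1+h_2)\}$ all lie in $(-\delta,\delta)$. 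Reducing the displayed identity modulo $1$ and applying the triangle inequality in $\R/\Z$ then yields $\|\lambda n\|_{\R/\Z}\le 3\delta$, and hence $\|q\lambda n\|_{\R/\Z}\le 3|q|\delta=O_k(\delta)$.

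There is no serious obstacle here, given the heavy lifting done by Lemma \ref{lem:simple.deriv}; the only point worth verifying is the availability of the three required non-zero vertices of the cube, which is automatic once $k\ge1$ (and the case $k=0$, in which $\nu$ would be strongly locally polynomial of degree $-1$, is degenerate and may be ignored).
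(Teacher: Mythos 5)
Your deduction is correct and fills in the step the paper leaves implicit (the corollary is stated without proof). Lemma~\ref{lem:simple.deriv} reduces matters to bounding $\|q\lambda n\|_{\R/\Z}$ with $|q|\ll_k 1$, and the parallelogram identity $\lambda n=\lambda(n+h_1)+\lambda(n+h_2)-\lambda(n+h_1+h_2)$ is exactly the right device to recover a bound at the vertex $\omega=\mathbf 0$, which the hypothesis does not directly control; this gives $\|\lambda n\|_{\R/\Z}\le 3\delta$ and hence $\|\Delta_{h_1,\ldots,h_{k+1}}\phi(n)\|_{\R/\Z}\le 3|q|\delta=O_k(\delta)$ as required.
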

\begin{proof}[Proof of Lemma~\ref{lem:simple.deriv}]
Assume that
\begin{equation}\label{eq:simple.deriv.0}
n+\omega\cdot h\in B_N(\nu;J)\cap A\text{ for all }\omega\in\{0,1\}^{k+1}\backslash\{\mathbf0\}.
\end{equation}
We have
\begin{equation}\label{eq:simple.deriv.1}
\Delta_{h_1,\ldots,h_{k+1}}\phi(n)=\sum_{\omega\in\{0,1\}^{k+1}}(-1)^{k+1-|\omega|}\lambda(n+\omega\cdot h)\{\nu(n+\omega\cdot h)\}.
\end{equation}
Splitting the right-hand side of (\ref{eq:simple.deriv.1}), we see that $\Delta_{h_1,\ldots,h_{k+1}}\phi(n)$ is equal to
\begin{equation}\label{eq:simple.deriv.2}
\begin{split}\lambda n\sum_{\omega\in\{0,1\}^{k+1}}(-1)^{k+1-|\omega|}\{\nu(n+\omega\cdot h)\}\qquad\qquad\qquad\qquad\\
+\sum_{i=1}^{k+1}\lambda h_i\sum_{\omega:\omega_i=1}(-1)^{k+1-|\omega|}\{\nu(n+\omega\cdot h)\}.
\end{split}
\end{equation}
However, the final sum of (\ref{eq:simple.deriv.2}) is equal to $\Delta_{h_1,\ldots,h_{i-1},h_{i+1},\ldots,h_{k+1}}\{\nu\}(n+h_i)$, which vanishes because $n+\omega\cdot h\in B_N(\nu;J)\cap A$ for every $\omega$ with $\omega_i=1$ and because $\{\nu\}$ is locally polynomial of degree $k-1$ on $B_N(\nu;J)\cap A$ by Lemma~\ref{lem:c.psi(n)} and the hypothesis that $|J|\le2^{-k}$. We therefore have
\begin{equation}\label{eq:simple.deriv.3}
\Delta_{h_1,\ldots,h_{k+1}}\phi(n)=\lambda n\sum_{\omega\in\{0,1\}^{k+1}}(-1)^{k+1-|\omega|}\{\nu(n+\omega\cdot h\})=\lambda n\Delta_{h_1,\ldots,h_{k+1}}\{\nu\}(n).
\end{equation}
Now $n$ may not belong to $B_N(\nu;J)\cap A$, and so we cannot similarly conclude that $\Delta_{h_1,\ldots,h_{k+1}}\{\nu\}(n)=0$. However, by (\ref{eq:simple.deriv.0}) and the assumption that $\nu$ is strongly locally polynomial on $A$ we can conclude that $\Delta_{h_1,\ldots,h_{k+1}}\nu(n)=0$, and it is clear that $\Delta_{h_1,\ldots,h_{k+1}}\{\nu\}(n)$ and $\Delta_{h_1,\ldots,h_{k+1}}\nu(n)$ differ by an integer and that $|\Delta_{h_1,\ldots,h_{k+1}}\{\nu\}(n)|\ll_k1$. Hence $\Delta_{h_1,\ldots,h_r}\{\nu\}(n)\in\{q\in\Z:|q|\ll_k1\}$, which combined with (\ref{eq:simple.deriv.3}) yields the desired result.
\end{proof}
\begin{proof}[Proof of (\ref{eq:anbncndn.U5}) in the case $k=5$]
Recall that
\[
\phi_{k-1}(n)=\alpha_{k-1}n\{\alpha_{k-2}n\{\ldots\{\alpha_1n\}\ldots\}\}.
\]
By Proposition~\ref{prop:low.depth.weak.recur} there exist $\varepsilon\ll1$ and $\varepsilon'\gg1$ such that
\[
|B_N(\phi_2,\phi_1,\alpha_4 n;I_{1/2-\varepsilon'},I_\varepsilon,I_\varepsilon)|\gg N,
\]
and so a similar argument to Proposition~\ref{prop:weak.recur.implies.strong.poly} implies that there is some interval $J\subset(-1/2,1/2]$ such that
\[
|B_N(\phi_2,\phi_1,\alpha_4 n;J,I_\varepsilon,I_\varepsilon)|\gg N
\]
and such that $\phi_3$ is strongly locally polynomial of degree 3 on $B_N(\phi_2,\phi_1,\alpha_4 n;J,I_\varepsilon,I_\varepsilon)$.

By Corollary~\ref{cor:simple.deriv} there exists $\delta\gg1$ such that if $J'$ is an interval in $(-1/2,1/2]$ of width $\delta$ then $\phi_4$ is, say, $1/10$-approximately strongly locally polynomial of degree 4 (modulo 1) on $B_N(\phi_3,\phi_2,\phi_1,\alpha_4 n;J',J,I_\varepsilon,I_\varepsilon)$. Applying the pigeonhole principle to the elements of $B_N(\phi_2,\phi_1,\alpha_4 n;J,I_\varepsilon,I_\varepsilon)$ we can obtain such an interval whilst ensuring that
\[
|B_N(\phi_3,\phi_2,\phi_1,\alpha_4 n;J',J,I_\varepsilon,I_\varepsilon)|\gg N.
\]
Proposition~\ref{prop:approx.strong.local.poly} then completes the proof of the theorem.
\end{proof}
\begin{remark}An identical proof shows, more generally, that the function
\[
f:n\mapsto e\left(\lambda n\left\{\gamma n^t\left\{\beta n^r\prod_{i=1}^m\{\alpha_in\}\right\}^s\right\}\right)
\]
satisfies $\|f\|_{U^{s(r+m)+t+2}[N]}\gg_{m,r,s,t}1$.
\end{remark}
\appendix
\section{Coordinates, metrics and equidistribution in nilmanifolds}
\label{sec:poly.seq}
The aim of this appendix is to prove Lemmas \ref{lem:equidist.to.recur}, \ref{lem:g'.to.g} and \ref{lem:std.basis}. Throughout, where $\mathcal{X}$ and $\mathcal{X}'$ are Mal'cev bases for a nilmanifold $G/\Gamma$ we write $d$ for the metrics on $G$ and $G/\Gamma$, and $\psi$ for the coordinates, associated to $\mathcal{X}$; we write $d'$ and $\psi'$, respectively, for the metrics and coordinates associated to $\mathcal{X}'$.

As we remarked in Section \ref{sec:literature}, the lemmas we are about to prove essentially follow by combining various results from \cite[Appendix A]{poly.seq}. The notation of that work is identical to ours, and so the results we cite can be read directly from \cite[Appendix A]{poly.seq} without difficulty. We therefore refer to these results by number only, without restating them here.

We repeatedly use the observation, made in the proof of \cite[Lemma A.15]{poly.seq}, that if $G/\Gamma$ is a nilmanifold then for every $x,y\in G$ there is some $z\in\Gamma$ such that $d(x\Gamma,y\Gamma)=d(x,yz)$.

We begin by recalling and proving Lemma \ref{lem:equidist.to.recur}.
\newtheorem*{lem:equidist.to.recur}{Lemma \ref{lem:equidist.to.recur}}
\begin{lem:equidist.to.recur}Let $M\ge2$. Let $G/\Gamma$ be an $m$-dimensional nilmanifold with an $M$-rational nested Mal'cev basis $\mathcal{X}$, and let $d$ be the metric associated to $\mathcal{X}$. Let $\rho\le1$ and $x\in G$, and suppose that $g:[N]\to G$ is $\eta$-equidistributed in $G/\Gamma$. Then a proportion of at least
\[
\frac{\rho^m}{M^{O(m)}}-\frac{3\eta}{\rho}
\]
of the points $(g(n)\Gamma)_{n\in[N]}$ lie in the ball $\{y\Gamma:d(y\Gamma,x\Gamma)\le\rho\}$.
\end{lem:equidist.to.recur}
We start by bounding from below the measure of a metric ball in $G/\Gamma$. Here and throughout this appendix we write $B_\rho(x)$ for the ball $\{y\Gamma:d(y\Gamma,x\Gamma)\le\rho\}$.
\begin{lemma}\label{lem:metric.ball}
Suppose $x\in G$ and let $\rho\in(0,1)$ be a parameter. Then
\[
\mu(B_\rho(x))\ge\frac{\rho^m}{M^{O(m)}}.
\]
\end{lemma}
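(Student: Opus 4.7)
The plan is to exhibit, inside $B_\rho(x)$, the injective image under the quotient map $\pi : G \to G/\Gamma$ of a small Mal'cev box about $x$ whose Haar volume is bounded below by $\rho^m/M^{O(m)}$. For a constant $C = O(1)$ to be chosen, set
\[
U(x) := \{ ux : u \in G,\ |\psi(u)| \le \rho/M^C \}.
\]
By the defining bound $d(u,1) \le |\psi(u)|$ and the right-invariance (\ref{eq:right.invar}) of $d$, every $ux \in U(x)$ satisfies $d(ux, x) = d(u,1) \le \rho$, so $\pi(U(x)) \subseteq B_\rho(x)$. Since the fundamental domain for $\Gamma$ in $G$ corresponds under $\psi$ to the unit cube, the normalised Haar measure on $G$ pulls back to Lebesgue measure on $\R^m$ via $\psi$; combined with right-invariance of Haar, this yields $\vol(U(x)) = (2\rho/M^C)^m$.

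It remains to verify that $\pi|_{U(x)}$ is injective for $C$ sufficiently large. Since $B_\rho(x)$ depends only on the coset $x\Gamma$, I may assume without loss of generality that $x$ lies in the fundamental domain, so $|\psi(x)| \le 1/2$. Suppose $u_1x$ and $u_2x$ project to the same element of $G/\Gamma$; then $u_2^{-1}u_1 = xzx^{-1}$ for some $z \in \Gamma$. The quasi-norm estimates for $|\psi|$ recorded in \cite[Appendix A]{poly.seq} give $|\psi(u_2^{-1}u_1)| \le M^{O(1)}(\rho/M^C)$, and hence $|\psi(xzx^{-1})| \le M^{O(1)}\rho/M^C$. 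Since the Mal'cev basis is $M$-rational and $|\psi(x)| \le 1/2$, the conjugation map $w \mapsto x^{-1}wx$ is polynomial of bounded degree in Mal'cev coordinates with coefficients of height $M^{O(1)}$; applying it at $w = xzx^{-1}$ yields $|\psi(z)| \le M^{O(1)}\rho/M^C$. Choosing $C$ large enough (still $O(1)$) makes this quantity less than $1/2$, and since $\psi(z)$ has integer entries this forces $z = 1$.

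With the injectivity of $\pi|_{U(x)}$ established, we conclude $\mu(B_\rho(x)) \ge \mu(\pi(U(x))) = \vol(U(x)) \ge \rho^m/M^{O(m)}$, as required. The main technical point is the injectivity step, which requires controlling how conjugation by $x$ distorts Mal'cev coordinates; this is precisely where the $M$-rationality of $\mathcal{X}$, together with the reduction to a representative in the fundamental domain, is essential.
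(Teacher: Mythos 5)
Your proof is correct and reaches the same bound, but by a somewhat different construction than the paper's. The paper sets $B'=\{y\in G:|\psi(y)-\psi(x)|\le\delta\}$, a $\psi$-coordinate box around $\psi(x)$, uses \cite[Lemma A.4]{poly.seq} to convert the coordinate bound into the metric bound $d(y,x)\le M^C\delta$ (this is where the $M^{O(1)}$ loss enters), and then asserts that $\mu(B'\Gamma)=\mu(B')=(2\delta)^m$ ``for sufficiently small $\delta$'' as a straightforward exercise. You instead take a right-translate $U(x)=\{ux:|\psi(u)|\le\rho/M^C\}$; the containment in $B_\rho(x)$ then comes for free from the defining inequality $d(u,1)\le|\psi(u)|$ and right-invariance, with no appeal to Lemma A.4, and your $M^C$ shrinkage is needed only for the injectivity of $\pi|_{U(x)}$, which you prove explicitly via the quasi-norm estimates for multiplication and conjugation. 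The trade-off is that you make visible the place where the paper is glossing --- one does need to check that the implicit ``sufficiently small'' threshold can be achieved by enlarging $C$ by an $O(1)$ amount while still keeping $\delta=\rho/M^C$ (since $\rho$ may be close to $1$), and your injectivity step supplies exactly this. One small imprecision: the coefficients of the conjugation map $w\mapsto x^{-1}wx$ in Mal'cev coordinates are real numbers depending on $\psi(x)$, not rationals, so ``height $M^{O(1)}$'' is not quite the right phrase; what you actually need, and what does hold given $|\psi(x)|\le1/2$ and the $M$-rationality of $\mathcal{X}$, is that these coefficients are \emph{bounded in magnitude} by $M^{O(1)}$, with the polynomial map fixing the origin, so that it sends small inputs to small outputs. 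With that wording fixed, the argument is sound.
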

\begin{proof}In this proof we appeal \cite[Lemma A.14]{poly.seq}. The reader may note that the hypothesis of that lemma includes the assumption that $\mathcal{X}$ is adapted to some filtration of $G$. However, the only place this is used is in invoking \cite[Lemma A.3]{poly.seq}, which assumes only the weaker property of being nested. We are therefore free to apply \cite[Lemma A.14]{poly.seq} in the context of Lemma \ref{lem:equidist.to.recur}.

Let $\delta\in(0,1)$ be a parameter to be determined later. Set $B'=\{y\in G:|\psi(y)-\psi(x)|\le\delta\}$. By \cite[Lemma A.14]{poly.seq} we may assume that $|\psi(x)|\le1$, and so \cite[Lemma A.4]{poly.seq} implies that there is an absolute constant $C$ such that for every $y\in B'$ we have
\[
d(y,x)\le M^C|\psi(y)-\psi(x)|\le M^C\delta.
\]
Setting $\delta=\rho/M^C$ therefore implies that $B'\Gamma\subset B_\rho(x)$, and in particular that $\mu(B)\ge\mu(B'\Gamma)$. It is a straightforward exercise to verify that for sufficiently small $\delta$ we have
\[
\mu(B'\Gamma)=\mu(B')=(2\delta)^m\ge\frac{\rho^m}{M^{Cm}},
\]
and so the lemma is proved.
\end{proof}
\begin{proof}[Proof of Lemma \ref{lem:equidist.to.recur}]
Define a non-negative function $f:G/\Gamma\to\R$ by
\[
f(y\Gamma)=\max\left\{0,1-\left(\textstyle\frac{2}{\rho}\right)d(y\Gamma,B_{\rho/2}(x))\right\}.
\]
Since $f$ takes the value $1$ on $B_{\rho/2}(x)$ we have
\[
\int_{G/\Gamma}f\ge\mu(B_{\rho/2}(x))\ge\frac{\rho^m}{M^{O(m)}}
\]
by Lemma \ref{lem:metric.ball}. Observe also that $f$ is Lipschitz with Lipschitz norm $1+2/\rho$, and so the $\eta$-equidistribution of $g$ therefore implies that
\[
\E_{n\in[N]}f(g(n)\Gamma)\ge\frac{\rho^m}{M^{O(m)}}-\eta\left(1+\frac{2}{\rho}\right)\ge\frac{\rho^m}{M^{O(m)}}-\frac{3\eta}{\rho}.
\]
The fact that $f$ is bounded by $1$ and supported on $B_\rho(x)$ therefore yields the desired result.
\end{proof}
We now recall and prove Lemma \ref{lem:g'.to.g}.
\newtheorem*{lem:g'.to.g}{Lemma \ref{lem:g'.to.g}}
\begin{lem:g'.to.g}
Let $\rho\le1$ and $\sigma$ be parameters. Let $G/\Gamma$ be a nilmanifold with an $M$-rational nested Mal'cev basis $\mathcal{X}$, suppose that $G'$ is a rational subgroup of $G$, and suppose that $\mathcal{X}'$ is a nested Mal'cev basis for $G'/\Gamma'$ in which each element is an $M$-rational combination of the elements of $\mathcal{X}$. Suppose that $\varepsilon\in G$ satisfies $d(\varepsilon,1)\le\sigma$, and that $\gamma\in\Gamma$. Finally, suppose that $g$ is an element of $G'$ such that $d'(g\Gamma',\Gamma')\le\rho$. Then $d(\varepsilon g\gamma\Gamma,\Gamma)\le M^{O(1)}\rho+\sigma$.
\end{lem:g'.to.g}
\begin{proof}
The fact that $d'(g\Gamma',\Gamma')\le\rho$ implies that there exists $z\in\Gamma'$ such that
\begin{equation}\label{eq:ap.1}
d'(gz,1)\le\rho.
\end{equation}
An application of \cite[Lemma A.4]{poly.seq} therefore implies that $|\psi'(gz)|\le M^{O(1)}\rho$, and so \cite[Lemma A.6]{poly.seq} and (\ref{eq:ap.1}) combine to give
\begin{equation}\label{eq:ap.2}
d(gz,1)\le M^{O(1)}\rho.
\end{equation}
The right-invariance of $d$ (\ref{eq:right.invar}) implies that $d(\varepsilon gz,1)=d(\varepsilon,(gz)^{-1})$, and so the symmetry of $d$ about the identity (\ref{eq:d.sym}) and the triangle inequality imply that
\begin{equation}\label{eq:ap.3}
d(\varepsilon gz,1)\le d(\varepsilon,1)+d((gz)^{-1},1)=d(\varepsilon,1)+d(gz,1).
\end{equation}
The left-hand side of (\ref{eq:ap.3}) is equal to $d(\varepsilon g\gamma\Gamma,\Gamma)$, since $\gamma,z\in\Gamma$, whilst the right-hand side is at most $M^{O(1)}\rho+\sigma$ by (\ref{eq:ap.2}) and the assumption on $\varepsilon$, and so the lemma is proved.
\end{proof}
Finally, let us recall and prove Lemma \ref{lem:std.basis}.
\newtheorem*{lem:std.basis}{Lemma \ref{lem:std.basis}}
\begin{lem:std.basis}
Let $\mathcal{Y}=\{Y_1,\ldots,Y_m\}$ be a nested Mal'cev basis for $T_p^r/Z_p^r$ in which each element $Y_i$ is equal to either an element $X_j$ of the standard basis $\mathcal{X}$ or its inverse $-X_j$. Then the nilmanifold coordinate map $\chi_\mathcal{Y}$ associated to $\mathcal{Y}$, and the metric $d$ associated to the standard basis $\mathcal{X}=\{X_1,\ldots,X_m\}$, satisfy
\[
|\chi_\mathcal{Y}(x)|\ll_{p,r}d(x\Gamma,\Gamma)
\]
for every $x\in T_p^r$.
\end{lem:std.basis}
\begin{proof}
Let $c<1$ be an absolute constant to be determined later. Since $|\chi_\mathcal{Y}(x)|\ll_{p,r}1$ for every $x\in G$, it is sufficient to prove the lemma under the additional assumption that
\begin{equation}\label{eq:ap.4}
d(x\Gamma,\Gamma)<c<1.
\end{equation}
Let $z$ be an element of $\Gamma$ satisfying
\begin{equation}\label{eq:ap.5}
d(xz,1)=d(x\Gamma,\Gamma).
\end{equation}
The assumptions on $\mathcal{Y}$ imply in particular that each element of $\mathcal{Y}$ is a $1$-rational combination of elements of the standard basis, and \emph{vice versa}, and so \cite[Lemma A.4]{poly.seq} combines with (\ref{eq:ap.4}) and (\ref{eq:ap.5}) to imply that there is an absolute constant $C$ such that 
\begin{equation}\label{eq:ap.6}
|\psi_\mathcal{Y}(xz)|\le Cd(x\Gamma,\Gamma).
\end{equation}
Setting $c=1/2C$, condition (\ref{eq:ap.4}) therefore implies that $|\psi_\mathcal{Y}(xz)|<1/2$, which in particular implies that
\[
\chi_\mathcal{Y}(x)=\psi_\mathcal{Y}(xz),
\]
and so the lemma follows from (\ref{eq:ap.6}).
\end{proof}
\section{Bergelson and Leibman's characterisation of bracket polynomials}
\label{sec:berg.leib}
The purpose of this appendix is to sketch how Theorem \ref{thm:berg.leib} can be read out of the work of Bergelson and Leibman \cite{berg.leib}. Let us begin, then, by recalling the statement of Theorem \ref{thm:berg.leib}.
\newtheorem*{thm:berg.leib}{Theorem \ref{thm:berg.leib}}
\begin{thm:berg.leib}[Bergelson--Leibman \cite{berg.leib}]
Let $\Theta_1,\ldots,\Theta_r$ be constant-free bracket forms. Then there exist $p\ge1$, a constant-free polynomial form $P$ on $T_p^r$, and a nested Mal'cev basis $\mathcal{Y}=\{Y_1,\ldots,Y_m\}$ for $T_p^r/Z_p^r$ such that each element $Y_i$ of $\mathcal{Y}$ is equal to either an element $X_j$ of the standard basis $\mathcal{X}$ or its inverse $-X_j$, and such that for every $i=1,\ldots,r$ we have $\{\pm\Theta_i\}=\chi_\mathcal{Y}(P)_{m-r+i}$.
\end{thm:berg.leib}
This essentially follows from \cite[Proposition 6.9]{berg.leib}. Indeed, it is shown in \cite[\S6.8]{berg.leib} how, given a Mal'cev basis $\mathcal{Y}$ of $T_p$, the nilmanifold coordinates $\chi_{\mathcal Y}(g)_i$ of a matrix $g\in T_p$ can be defined equivalently as formal bracket expressions in the entries of $g$; \cite[Proposition 6.9]{berg.leib} then states that if $\mathcal{A}$ is a commutative ring, and $b$ is an arbitrary bracket expression in the elements of $\mathcal{A}$, then there is some $T_p/Z_p$ with Mal'cev basis $\mathcal{Y}=\{Y_1,\ldots,Y_m\}$, and some upper-triangular matrix with elements of $\mathcal{A}$ as entries, such that $\{\pm b\}=\chi(A)_m$. To prove Theorem \ref{thm:berg.leib}, therefore, we essentially just apply this result with $\mathcal{A}$ as the ring of constant-free polynomial forms.

There are, however, some issues with this deduction.
\begin{enumerate}
\item In \cite{berg.leib} fractional parts are taken to lie in $[0,1)$, whereas in the present work they lie in $(-1/2,1/2]$.
\item Whilst it is explicit in \cite[Proposition 6.9]{berg.leib} each element $Y_i$ of the Mal'cev basis $\mathcal{Y}$ is equal to either an element $X_j$ of the standard basis $\mathcal{X}$ or its inverse $-X_j$, it is not stated explicitly that the $Y_i$ are ordered in such a way that $\mathcal Y$ is nested.
\item Applying \cite[Proposition 6.9]{berg.leib} gives only a single bracket polynomial in terms of a polynomial mapping into $T_p/Z_p$, rather than an $r$-tuple of bracket polynomials in terms of a polynomial mapping into $T_p^r/Z_p^r$.
\end{enumerate}
It is straightforward to check that the change in the range of the fractional part operation does not affect the truth of Theorem \ref{thm:berg.leib}; in particular, the calculations in \cite[\S5.9]{berg.leib} proceed in exactly the same way. Point 1 is therefore of no concern.

Point 2 is also of no concern, since the Mal'cev basis defined implicitly in \cite[Proposition 6.9]{berg.leib} is, in fact, nested. This is a consequence of the fact that the basis elements are taken in a \emph{legal order} in the sense of \cite[\S5.7]{berg.leib}.\footnote{Mal'cev bases in \cite{berg.leib} are, by definition, adapted to the lower central series \cite[\S1.2]{berg.leib}. However, taking the basis elements in a legel order in the sense of \cite[\S5.7]{berg.leib} does not guarantee that the resulting basis is adapted to the lower central series, as can be seen by considering the order defined in \cite[\S5.5]{berg.leib} in the case $d=4$. Being in a legal order does, however, guarantee that the basis is a nested Mal'cev basis in the sense we have defined in this paper.}

Point 3 is straightforward to overcome. So far, for each $i=1,\ldots,r$ we have a nilmanifold $T_{p_i}/Z_{p_i}$ of dimension $m_i$, say; a nested Mal'cev basis $\mathcal{Y}_i$ for $T_{p_i}/Z_{p_i}$ consisting of elements of the standard basis and their inverses; and a polynomial form $P_i$ on $T_{p_i}$ such that $\{\pm\Theta_i\}=\chi_{\mathcal{Y}_i}(P_i)_{m_i}$. We can define a nested Mal'cev basis $\mathcal{Y}'$ for the direct product $T_{p_1}\otimes\cdots\otimes T_{p_r}$ by simply taking the elements of $\mathcal{Y}_1$ in order, followed by the elements of $\mathcal{Y}_2$ in order, and so on up until we finally take the elements of $\mathcal{Y}_r$ in order. Note that $\chi_{\mathcal{Y}'}=(\chi_{\mathcal{Y}_1},\ldots,\chi_{\mathcal{Y}_r})$, and so in particular we have $\{-\Theta_i\}=\chi_{\mathcal{Y}'}(P_i)_{m_1+\ldots+m_i}$.

This leaves two further issues.
\begin{enumerate}\setcounter{enumi}{3}
\item Theorem \ref{thm:berg.leib} requires the $p_i$ all to be equal.
\item Theorem \ref{thm:berg.leib} requires that the $\Theta_i$ are expressed in terms of the \emph{last} $r$ coordinates of some polynomial form.
\end{enumerate}
We resolve point 4 really only for convenience in the main body of the paper. The proof of Theorem \ref{thm:bracket.non.uniform} would proceed almost identically in the event that $T_{p_1}\otimes\cdots\otimes T_{p_r}$ appeared in place of $T_p^r$ in the conclusion of Theorem \ref{thm:berg.leib}, but having $T_p^r$ makes some of our notation slightly cleaner. In fact, if we were concerned with optimising the implied constant in the conclusion of Theorem \ref{thm:bracket.non.uniform} then it would be preferable to allow $T_{p_1}\otimes\cdots\otimes T_{p_r}$ in place of $T_p^r$. However, we are not concerned with the exact bounds in Theorem \ref{thm:bracket.non.uniform}, and so we prove Theorem \ref{thm:berg.leib} as stated.

In any case, it is not difficult to obtain $T_p^r$ in place of $T_{p_1}\otimes\cdots\otimes T_{p_r}$. The key observation is that if $q$ is a multiple of $p$ then there is an obvious embedding $\iota_{p,q}:T_p\hookrightarrow T_q$ such that the image $\iota_{p,q}(Z_p)$ is a subset of $Z_q$. For example, the Heisenberg group $T_2$ embeds into $T_4$ via the map $\iota_{2,4}:T_2\hookrightarrow T_4$ defined by
\[
\iota_{2,4}\left(\begin{array}{ccc}
    1 & x & z\\
    0 & 1 & y\\
    0 & 0 & 1
    \end{array}\right)
=\left(\begin{array}{ccccc}
    1 & 0 & x & 0 & z\\
    0 & 1 & 0 & 0 & 0\\
    0 & 0 & 1 & 0 & y\\
    0 & 0 & 0 & 1 & 0\\
    0 & 0 & 0 & 0 & 1
    \end{array}\right),
\]
and the subgroup $\iota_{2,4}(Z_2)$ is equal to
\[
\left(\begin{array}{ccccc}
    1 & 0 & \Z & 0 & \Z\\
    0 & 1 & 0  & 0 & 0\\
    0 & 0 & 1  & 0 & \Z\\
    0 & 0 & 0  & 1 & 0\\
    0 & 0 & 0  & 0 & 1
    \end{array}\right).
\]
Moreover, and crucially, if $\mathcal{W}=\{W_1,\ldots,W_k\}$ is a nested Mal'cev basis for $T_p/Z_p$ consisting entirely of elements of the standard basis and their inverses, then it is possible to choose a nested Mal'cev basis $\mathcal{W}^{(q)}$ for $T_q/Z_q$ consisting entirely of elements of the standard basis and their inverses, and that includes the elements $W_i^{(q)}:=\log\iota_{p,q}(\exp W_i)$ in the same order that they appear in $\mathcal{W}$.\footnote{The basis $\mathcal{W}^{(q)}$ is not uniquely defined in this way. We simply choose a basis arbitrarily from all those nested Mal'cev bases consisting of elements of the standard basis and their inverses in which the elements $\log\iota_{p,q}(\exp W_i)$ appear in the desired order.} The upshot of this is that the non-zero coordinates of an element $\iota_{p,q}(g)$ with respect to $\mathcal{W}^{(q)}$ in $T_q$ will be the same as the coordinates of $g$ with respect to $\mathcal{W}$ in $T_p$. This implies that if $z\in Z_p$ and $gz$ belongs to the fundamental domain of $T_p/Z_p$ then $\iota_{p,q}(gz)$ belongs to the fundamental domain of $T_q/Z_q$, and so the non-zero entries of $\chi_{\mathcal{W}^{(q)}}(\iota_{p,q}(g))$ are equal to the non-zero entries of $\chi_\mathcal{W}(g)$.

Set $q$ as the lowest common multiple of the $p_i$, and write $m$ for the dimension of the group $T_q$. Set $P_i'=\iota_{p_i,q}(P_i)$ and $P=(P_1',\ldots,P_r')$. Define a basis $\mathcal{Y}$ for $T_q^r$ by taking the bases $\mathcal{Y}_i^{(q)}$ in order, but with the elements $Y_m^{(q)},\ldots,Y_m^{(q)}$ moved to the right so that they are now the last $r$ elements of the basis. Note that these basis elements are central, and so this last operation affects neither the property of being a nested Mal'cev basis nor the corresponding coordinates, and resolves point 5 above. We then have
\[
\{\pm\Theta_i\}=(\chi_{\mathcal{Y}}(P)_{rm-r+i}),
\]
as required by Theorem \ref{thm:berg.leib}.
\section{Basic properties of polynomial mappings}\label{ap:poly.map}
The main purpose of this appendix is to prove Lemma \ref{lem:plane.home}, which we now recall.
\newtheorem*{lem:plane.home}{Lemma \ref{lem:plane.home}}
\begin{lem:plane.home}
Let $k,p,r\in\N$. Then there is a some $d\in\N$ depending only on $k$ and $p$ such that if $\rho$ is an arbitrary polynomial mapping of degree at most $k$ into $T_p^r$ then the derivatives $\partial_{h_{d+1}}\ldots\partial_{h_1}\rho$ are all trivial.
\end{lem:plane.home}
The proof of Lemma \ref{lem:plane.home} rests on the following basic properties of polynomial mappings into $T_p^r$.
\begin{lemma}\label{lem:poly.map.properties}
Let $\rho,\sigma$ be polynomial mappings into $T_p$ of degree at most $k,k'$, respectively. Then
\begin{enumerate}
\renewcommand{\labelenumi}{(\roman{enumi})}
\item the product mapping $\rho\sigma$ taking $n$ to $\rho(n)\sigma(n)$ is a polynomial mapping of degree at most $k+k'$;
\item the inverse mapping $\rho^{-1}$ taking $n$ to $\rho(n)^{-1}$ is a polynomial mapping of degree $O_{k,p}(1)$.
\end{enumerate}
\end{lemma}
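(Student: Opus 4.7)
The plan is to treat (i) by a direct computation from the definition of matrix multiplication, and then derive (ii) from (i) together with the nilpotency of the strictly upper-triangular part of an element of $T_p$.

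For (i), I would simply observe that the $(i,j)$-entry of $\rho(n)\sigma(n)$ is given by the usual matrix product formula, namely
\[
(\rho(n)\sigma(n))_{ij} = \sum_{l} \rho(n)_{il}\sigma(n)_{lj}.
\]
Each summand is a product of a polynomial of degree at most $k$ and a polynomial of degree at most $k'$, hence a polynomial of degree at most $k+k'$, and so is the whole sum. That $\rho(n)\sigma(n)$ lands in $T_p$ (so that its diagonal entries are all $1$) is immediate from the fact that $T_p$ is closed under multiplication.

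For (ii), the idea is to write $\rho(n) = I_{p+1} + N(n)$, where $N(n)$ is the strictly upper-triangular part of $\rho(n)$. Since $N(n)$ is a $(p+1) \times (p+1)$ strictly upper-triangular matrix its nilpotency index is at most $p+1$, so $N(n)^{p+1} = 0$ identically in $n$. The formal geometric series for $(I+N)^{-1}$ therefore terminates, giving
\[
\rho(n)^{-1} = \sum_{i=0}^{p}(-1)^i N(n)^i.
\]
By iterated application of (i), each entry of $N(n)^i$ is a polynomial in $n$ of degree at most $ki$, and so each entry of $\rho(n)^{-1}$ is a polynomial of degree at most $kp = O_{k,p}(1)$.

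Both parts are routine, and I do not anticipate any real obstacle; the only thing worth flagging is that it is precisely the nilpotency of the strictly upper-triangular part (forcing the Neumann-style series to terminate at finite degree) that ensures the inverse has polynomial, rather than merely rational, entries in $n$.
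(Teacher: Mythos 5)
Your argument is correct, and part~(ii) takes a genuinely different route from the paper's. The paper proves (ii) by working directly with the identity $\sum_t \rho(n)_{it}\rho^{-1}(n)_{tj}=0$, using the unipotent upper-triangular structure to rearrange it as $\rho^{-1}(n)_{ij}=-\rho(n)_{ij}-\sum_{t=i+1}^{j-1}\rho(n)_{it}\rho^{-1}(n)_{tj}$ and then inducting on $p-i$ to bound the degree of each entry. You instead split $\rho(n)=I_{p+1}+N(n)$ with $N(n)$ strictly upper-triangular, use the nilpotency $N(n)^{p+1}=0$ to terminate the Neumann series, and read off the bound from the finite expansion $\rho(n)^{-1}=\sum_{i=0}^{p}(-1)^iN(n)^i$. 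Both arguments are elementary and correct; yours is a little slicker and yields the explicit bound $kp$ in one stroke, whereas the paper's row-by-row recursion is slightly more hands-on and produces a bound $O_{k,i}(1)$ depending on the row index (which of course also gives $O_{k,p}(1)$). Either form suffices for the application in Lemma~\ref{lem:plane.home}, since all that is needed there is that the degree of $\rho^{-1}$ is controlled in terms of $k$ and $p$ alone.
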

\begin{proof}
The first assertion is trivial. The second is also straightforward; we present the details for completeness.

We claim that each entry $(\rho^{-1}(n))_{ij}$ with $i<j\le p+1$ is a polynomial of degree at most $O_{k,i}(1)$, which is clearly sufficient to prove the second assertion. We prove this claim by induction on $p-i$; thus for any fixed $i$ we may assume that the claim holds for all values of $j$, for all greater values of $i$.

By definition of $\rho^{-1}$, for $i<j\le p+1$ we have
\[
\sum_{t=1}^p\rho(n)_{it}\rho^{-1}(n)_{tj}=0,
\]
but since the diagonal entries of each matrix $\rho(n)$ and $\rho(n)^{-1}$ are $1$, and the below-diagonal entries are $0$, this reduces to
\[
\rho(n)_{ij}+\rho^{-1}(n)_{ij}+\sum_{t=i+1}^{j-1}\rho(n)_{it}\rho^{-1}(n)_{tj}=0.
\]
This implies that
\[
\rho^{-1}(n)_{ij}=-\rho(n)_{ij}-\sum_{t=i+1}^{j-1}\rho(n)_{it}\rho^{-1}(n)_{tj},
\]
which is, by induction, a polynomial of degree at most $O_{k,i}(1)$, as claimed.
\end{proof}
\begin{proof}[Proof of Lemma \ref{lem:plane.home}]
It clearly suffices to prove the lemma in the case $r=1$.

Denote by $T_p(l)$ the subgroup of $T_p$ consisting of those matrices whose non-diagonal entries at a distance at most $l$ from the main diagonal are zero. Thus, for example, $T_p(0)=T_p$ and $T_p(p)=\{\id\}$. We claim that there is some $d\in\N$ depending only on $k$, $l$ and $p$ such that if $\rho$ is an arbitrary polynomial mapping of degree at most $k$ into $T_p$ whose image lies in $T_p(l)$ then the derivatives $\partial_{h_{d+1}}\ldots\partial_{h_1}\rho$ are all trivial. This is clearly sufficient to prove the lemma.

We prove this claim by induction on $p-l$; thus, for any fixed $l$, we may assume that the claim holds for all greater values of $l$.

The group operation of $T_p(l)$ restricted to the entries at a distance exactly $l+1$ from the main diagonal is simply addition in each entry. Therefore, if $\rho$ is a polynomial mapping of degree at most $k$ into $T_p$ whose image lies in $T_p(l)$, then every derivative $\partial_{h_{k+1}}\ldots\partial_{h_1}\rho$ lies in $T_p(l+1)$. Moreover, by Lemma \ref{lem:poly.map.properties} its other entries are all polynomials of degree at most $O_{k,p}(1)$, and so $\partial_{h_{k+1}}\ldots\partial_{h_1}\rho$ is a polynomial mapping of degree at most $O_{k,p}(1)$ whose image lies in $T_p(l+1)$. The claim, and hence the lemma, therefore follows by induction.
\end{proof}

\end{document}